\UseRawInputEncoding
\documentclass[11pt,leqno]{amsart}
\usepackage{amsthm,amsfonts,amssymb,amsmath,oldgerm,upgreek,xcolor,marvosym}
\numberwithin{equation}{section}
\usepackage{fullpage}
\usepackage[british]{babel}
\usepackage{amsmath}
\usepackage{amsfonts}
\usepackage{amssymb}
\usepackage{setspace}
\usepackage{mathrsfs}
\usepackage{fancyhdr}
\usepackage{graphicx}
\usepackage[colorlinks = true, linkcolor=blue,
filecolor=darkgreen,
urlcolor=cadmiumgreen,
citecolor=red,pdftex]{hyperref}
\usepackage{psfrag}
\usepackage{listings}
\usepackage{enumitem}
\usepackage{amsmath}
\usepackage{tikz-cd}
\usepackage[normalem]{ulem}

\usepackage[normalem]{ulem}

\definecolor{airforceblue}{rgb}{0.36, 0.54, 0.66}
\definecolor{darkgreen}{rgb}{0.0, 0.2, 0.13}
\definecolor{darkslategray}{rgb}{0.18, 0.31, 0.31}
\definecolor{mediumjunglegreen}{rgb}{0.11, 0.21, 0.18}
\definecolor{prussianblue}{rgb}{0.0, 0.19, 0.33}
\definecolor{warmblack}{rgb}{0.0, 0.26, 0.26}
\definecolor{cadmiumgreen}{rgb}{0.0, 0.42, 0.24}
\definecolor{mauvetaupe}{rgb}{0.57, 0.37, 0.43}
\definecolor{maroon(x11)}{rgb}{0.69, 0.19, 0.38}

\usepackage[top=30mm,bottom=30mm,left=25mm,right=25mm,a4paper]{geometry}

\def\eps{\varepsilon }

\newcommand{\omk}{\raisebox{0.5mm}{\text{\footnotesize $\langle $}} k  \raisebox{0.5mm}{\text{\footnotesize $ \rangle $}}}

\newcommand{\omtilk}{\raisebox{0.5mm}{\text{\footnotesize $\langle $}} \tilde k  \raisebox{0.5mm}{\text{\footnotesize $ \rangle $}}}

\newcommand{\omdifk}{\raisebox{0.5mm}{\text{\footnotesize $\langle $}} k-\tilde k  \raisebox{0.5mm}{\text{\footnotesize $ \rangle $}}}
\newcommand{\omdotk}{\raisebox{0.5mm}{\text{\footnotesize $\langle $}} \cdot  \raisebox{0.5mm}{\text{\footnotesize $ \rangle $}}}

\newcommand\br{\begin{remark}}
\newcommand\er{\end{remark}}
\newcommand\bp{\begin{pmatrix}}
\newcommand\ep{\end{pmatrix}}
\newcommand{\be}{\begin{equation}}
\newcommand{\ee}{\end{equation}}
\newcommand{\ba}[1]{\begin{array}{#1}}
\newcommand{\ea}{\end{array}}

\newcommand{\beg}{\begin{example}}
\newcommand{\eeg}{\end{exaplem}}
\newcommand{\bpr}{\begin{proposition}}
\newcommand{\epr}{\end{proposition}}
\newcommand{\bt}{\begin{theorem}}
\newcommand{\et}{\end{theorem}}
\newcommand{\bc}{\begin{corollary}}
\newcommand{\ec}{\end{corollary}}
\newcommand{\bl}{\begin{lemma}}
\newcommand{\el}{\end{lemma}}
\newcommand{\bd}{\begin{definition}}
\newcommand{\ed}{\end{definition}}
\newcommand{\brs}{\begin{remarks}}
\newcommand{\ers}{\end{remarks}}

\font\dsrom=dsrom10 scaled 1300
\def \indic{\textrm{\dsrom{1}}}

\font\dsrom=dsrom10 scaled 900
\def \indicf{\textrm{\dsrom{1}}}

\newtheorem{theo}{Theorem}
\newtheorem{prop}[theo]{Proposition}
\newtheorem{cor}[theo]{Corollary}
\newtheorem{lem}[theo]{Lemma}
\newtheorem{defi}[theo]{Definition}

\newtheorem{rem}[theo]{Remark}

\numberwithin{equation}{section}
\newtheorem{assump}{Assumption}

\def\eps {\varepsilon}

\def\part{\partial}
\def\part{\partial}

\newcommand{\CC}{{\mathbb C}}

\newcommand{\NN}{{\mathbb N}}

\newcommand{\RR}{{\mathbb R}}

\newcommand\cB{{\mathcal B}}
\newcommand\cC{{\mathcal C}}

\newcommand\cE{{\mathcal E}}
\newcommand\cF{{\mathcal F}}

\newcommand\cI{{\mathcal I}}
\newcommand\cJ{{\mathcal J}}
\newcommand\cK{{\mathcal K}}
\newcommand\cL{{\mathcal L}}

\newcommand\cQ{{\mathcal Q}}

\newcommand\cS{{\mathcal S}}
\newcommand\cT{{\mathcal T}}

\newcommand\cY{{\mathcal Y}}

\newsavebox\foobox
\def\slantvalue{0}
\newcommand{\slantboxengine}[2][\slantvalue]{\mbox{%
        \sbox{\foobox}{#2}%
        \hskip\wd\foobox
        \pdfsave
        \pdfsetmatrix{1 0 #1 1}%
        \llap{\usebox{\foobox}}%
        \pdfrestore
}}
\newcommand\slantbox[2][\slantvalue]{%
  \edef\slantvalue{#1}\expandafter\slantboxhelpA#2 \relax\relax}
\def\slantboxhelpA#1 #2\relax{%
  \slantboxengine{#1}%
  \ifx\relax#2\relax\else\ \slantboxhelpA#2\relax\fi
}

\newcommand\slu{\textit{\c{u}}}
\newcommand\slw{\textit{\c{w}}}

\title{The  Schr\" odinger--Klein--Gordon system revisited \smallskip
\\
{\it \footnotesize Dedicated to the memory of our  colleague, Dimitri Yafaev}}
\date{31  August 2026}
\author[Z. Ammari]{Zied Ammari}
\address[Zied Ammari]{Universite Marie \& Louis Pasteur, Laboratoire de Math\'ematiques LmB, UFR Sciences et 
techniques, 16 route de Gray, 25030 Besan\c{c}on CEDEX \\ France}
\author[C. Cheverry]{Christophe Cheverry}
\address[Christophe Cheverry]{Institut Math\'ematique de Rennes,
Campus de Beaulieu, 263 avenue du G\'en\'eral Leclerc CS 74205
35042 Rennes Cedex\\France}

\begin{document}

\begin{abstract}
We introduce a microlocal formulation of the Schr\"odinger--Klein--Gordon system describing the
interaction between a non-relativistic quantum particle and a Klein--Gordon field through Yukawa coupling.
Instead of working directly with the Schr\"odinger wave function, we represent the quantum
component by its Fourier--Wigner transform, and we rewrite the Klein--Gordon equation in terms of a
complex Fourier variable. Eliminating the field variable yields a closed nonlinear
Fourier--Moyal equation on phase space whose unknown is the Fourier--Wigner distribution associated with the quantum
component.

The resulting formulation provides a refined description of the particle--field interaction at
the microlocal level. In particular, the original cubic coupling is transformed into a quadratic
self-interaction governed by an explicit  bilinear operator. This new  representation
 permits the use of techniques from time-frequency analysis. 
 
 Building on this, we develop a well-posedness theory for the Fourier--Moyal equation in anisotropic 
spaces involving Wiener-type norms and prescribed moduli of continuity. Local existence and
uniqueness are established under general assumptions on the ultraviolet cutoff, together with
propagation of microlocal regularity. We then introduce a new construction of weak
$L^2$-solutions by exploiting compactness properties of Fourier--Wigner transforms and  obtain
global existence for prepared data. The aim  is to provide an alternative analytical framework
for the study of Yukawa-type interactions and  to establish a bridge between nonlinear dispersive
equations and phase-space methods.
\end{abstract}

\maketitle

\medskip
\noindent \textbf{Keywords}. Yukawa interaction; Schr\"odinger--Klein--Gordon system; Fourier--Moyal equation; Fourier--Wigner transform; Moyal product; phase-space analysis; time-frequency analysis; weak solutions; microlocal analysis.

{\small \parskip=1pt
\tableofcontents
}


\section{Introduction}

The Schr\"odinger--Klein--Gordon (SKG in abbreviated form) system is one of the fundamental nonlinear models
describing the interaction between a quantum particle and a relativistic scalar field. It was originally introduced as a semiclassical approximation of the Yukawa interaction. It  currently occupies
an important position at the crossroads of nonlinear dispersive equations, mathematical quantum
field theory and semiclassical analysis. Besides its physical relevance, it has
served for several decades as a benchmark for the development of analytical techniques for
coupled dispersive systems.

The mathematical theory of the SKG system is today well developed. Classical works have  established
global well-posedness for finite-energy solutions by combining energy estimates and compactness
arguments (see e.g.~\cite{MR778979,MR519635,MR1441722,MR390555,MR515899,MR904544}), while later developments have exploited Strichartz estimates and Bourgain-type spaces to
reach low-regularity regimes (see e.g.~\cite{MR2403699,MR2237673,MR2035502,MR2968610} and Subsection \ref{subsec:Presentationoftheequations}). Scattering theory \cite{MR4382307,MR1381889,MR1275411,MR2159005}, stability of solitary waves \cite{MR2024878,MR2585081,MR1393148,MR1888144}, attractors for
dissipative models \cite{MR1062399,MR1448829}, and connections with quantum field theory \cite{MR3255099,MR3737034,MR4548997}, have also received considerable
attention. Nevertheless, almost all existing approaches share a common feature: they formulate
the dynamics in terms of the physical variables consisting of the Schr\"odinger wave function and
the Klein--Gordon field.

The purpose of the present work is to develop a different point of view based on phase-space
analysis. Rather than studying directly the Schr\"odinger wave function $ u $, we describe the quantum
particle through its Fourier--Wigner transform. Simultaneously, we represent the
Klein--Gordon field by a complex Fourier variable which diagonalizes the free wave evolution.
Combining these two observations leads to a self-contained nonlinear evolution equation on
phase space, referred to throughout this paper as the Fourier--Moyal (FM) equation \eqref{transporteqonw}.

The resulting equation possesses several  remarkable structural properties. First, the nonlinear source term becomes quadratic. Indeed, once the Klein--Gordon component has been
solved explicitly through Duhamel's formula and substituted back into the Schr\"odinger
equation, the particle-field interaction appears as a quadratic self-interaction acting directly
on the Fourier--Wigner distribution. This interaction is encoded by the explicit Yukawa
bilinear operator \eqref{bilitodefine} whose structure reflects the geometry of the underlying phase space.

Second, the Fourier--Moyal formulation separates the transport
mechanism generated by the free Schr\"odinger evolution from the genuinely nonlinear part
associated with the exchange of mesons. The linear propagation is represented by a transport
operator in phase space, whereas the nonlinear interaction appears through an integral operator
whose kernel can be made  explicit. This decomposition provides a natural framework for
microlocal analysis and reveals features that remain hidden in the original variables.

Third, this formulation connects the analysis of SKG, and its generalizations called $ \text{SKG}_\chi $ - see \eqref{PSKG}, with modern
time-frequency analysis. The Fourier--Wigner transform, modulation spaces, Wiener amalgam
spaces and Moyal-type operators become intrinsic objects of the theory rather than auxiliary
technical tools. This allows us to investigate propagation of phase-space regularity and
continuity properties measured through suitable moduli of continuity $ \omega $.

\medskip

The main contributions of this work can be summarized as follows.

\begin{itemize}
\item \textbf{Derivation of the Fourier--Moyal formulation.} 
We explain the origin of the Fourier--Moyal equation and we  prove its
equivalence with the Schr\"odinger--Klein--Gordon system (SKG) for sufficiently regular
prepared data. This derivation also identifies the natural conserved quantities of the
microlocal dynamics and expresses the total energy entirely in terms of phase-space variables, see Section \ref{subsec:TransitionfromtheSKGsystemtotheSKGSequation}.

\item \textbf{Well-posedness theory in anisotropic Banach spaces.}
We develop a well-posedness theory directly for the Fourier--Moyal
equation. We introduce anisotropic Banach spaces adapted to the transport structure of the
equation and prove continuity properties of the Yukawa bilinear operator in these spaces.
These estimates yield local existence and uniqueness together with propagation of prescribed
moduli of continuity, see Theorem  \ref{mildtotal}. As a byproduct, in Section~\ref{sec:L2preparedsol}, one can present a propagated metric, which is related to the microlocal norm $ L^\infty(\cC^{\omega}_b) $
of the Fourier--Wigner transform of $ u $ (see Subsections  \ref{subsec:Notationsfunctionalsetting} and \ref{subsec:propagationinfty}),  enabling the comparison of weak $L^2$-solutions to $ \text{SKG}_\chi $, see Theorem \ref{GESPD}.

\item \textbf{Construction of global weak solutions.}  
We construct weak solutions directly at the level of the
Fourier--Wigner transform. Along these lines, instead of relying on classical compactness arguments for the
Schr\"odinger field, we exploit weak continuity properties of prepared Fourier--Wigner
distributions together with new microlocal estimates. This yields an alternative construction
of global $L^2$-solutions and establishes a  correspondence between weak solutions of the
Fourier--Moyal equation and weak solutions of the original Schr\"odinger--Klein--Gordon system, see Theorem \ref{GESPD} and Proposition \ref{globalexistence}.
\end{itemize}

Beyond the specific Yukawa model considered here, the present work illustrates how nonlinear
particle--field systems may be reformulated as autonomous equations on phase space. We expect
that this perspective will provide a useful framework for the study of other coupled dispersive
systems, semiclassical limits, quantum kinetic models, and problems involving propagation of
microlocal structures.

The paper is organized as follows. In Section~\ref{sec:Introduction} we explain the microlocal formulation of the 
Schr\"odinger--Klein--Gordon system and introduce our functional setting. Section~\ref{sec:KGSsystemrevisited} is devoted to the derivation of the Fourier--Moyal equation and to
its relation with the classical Schr\"odinger--Klein--Gordon system. In Section~\ref{sec:cauchyproblem} we establish
local well-posedness in anisotropic phase-space contexts. Finally, Section~\ref{sec:L2preparedsol} develops the theory
of prepared weak solutions and proves global existence through microlocal compactness
arguments.


\vskip 4mm

\section{Microlocal formulation} \label{sec:Introduction}
As explained above, this article is devoted to a model motivated by Yukawa's pioneering work \cite{Yukawa} on the strong nuclear force. At the semiclassical level, the interaction between the nucleon wave function $u$ and the meson field $A$, which mediates the attractive force binding nucleons within an atomic nucleus, is described (in dimensionless units) by the total energy
\begin{equation}\label{eq.skg.energytr}
\begin{array}{rl}
\mathcal E_{\chi} (t) := \! \! \! & \displaystyle \frac{1}{2} \,  \Bigl(\parallel \nabla_x u
(t,\cdot) \parallel^2_{L^2} + \|A (t,\cdot) \|^2_{L^2} + \|
\nabla_x A (t,\cdot) \|^2_{L^2}+ \|
\part_t A (t,\cdot) \|^2_{L^2} \Bigr) \\
\ & \displaystyle
+ \int_{\RR^d} \chi(D) A (t,x) \ |u(t,x)|^2 \, dx ,
\end{array}
\end{equation}
where $ (t,x) \in \RR \times \RR^d $ with $ d \in \NN^* $, and where $ \chi(D) $ is the Fourier multiplier associated with the symbol $ \chi $ whose action is given by
$$ \cF_x \bigl( \chi(D) A (t,\cdot) \bigr) = \chi (k) \, (\cF_x A) (t,k) , \qquad (\cF_x A) (t,k) = \int_{\RR^d} e^{-i k \cdot x} A(t,x) \, dx . $$
Throughout this text, we work with bounded even functions $ \chi $, thus  satisfying
\begin{equation} \label{voevenchi}
\chi \in L^\infty (\RR) \, , \qquad \chi(k) = \chi(-k) \, , \quad \forall k \in \RR^d \, .
\end{equation}
The Schr\" odinger--Klein--Gordon system $ \text{SKG}_{\chi} $ is the Euler--Lagrange equation derived from (\ref{eq.skg.energytr}).
It is composed of the two
following coupled scalar equations
\begin{subequations}\label{PSKG}
\begin{eqnarray}
& & \displaystyle i \partial_t u =  - \frac{1}{2} \Delta u + \chi(D) A  \times  u , \label{PSKG1} \\
& & \displaystyle (\Box +1 ) A = - \cF_k^{-1} (\chi) * \vert u \vert^2 .   \label{PSKG2}
\end{eqnarray}
\end{subequations}

\noindent In \eqref{PSKG}, the symbol $ \Box $ stands for the d'Alembertian $ \Box = \part^2_{tt} - \Delta_x $ (with $ \Delta_x $
being the Laplacian), $ \times $ is the multiplication, $ * $ is the convolution, and  $ \cF_k^{-1} $ is the inverse Fourier transform. Due to \eqref{voevenchi}, the distribution $ \cF_k^{-1} (\chi) $ is real-valued.
The system (\ref{PSKG1})-(\ref{PSKG2}) is complemented with initial data
\begin{equation}\label{inidataSKGinif}
u(0,\cdot) = u_0 (\cdot) , \qquad A(0,\cdot) = A_0 (\cdot), \qquad \part_t A(0,\cdot) = A_1 (\cdot).
 \end{equation}
We assume that both $ A_0 $ and $ A_1 $ are real-valued. By this way, the same applies to $ A(t,\cdot) $. Thus, the system $ \text{SKG}_{\chi} $ describes the coupled time evolution of the complex-valued function $ u $ and of  the real-valued function $ A $. For $  \chi $ constant equal to one, that is $ \chi \equiv \indic_\RR $, the interactions in the right hand sides of \eqref{PSKG} are purely local. 

Then, by taking $\chi=\indicf$, we recover the classical Schr\"odinger--Klein--Gordon system (see, e.g., \cite{MR778979,MR519635,MR515899}), which we simply denote by 
\[
\mathrm{SKG}\equiv \mathrm{SKG}_{\indicf},
\]
namely,
$$ \left \lbrace \begin{array}{l}
\displaystyle i \partial_t u =  - \frac{1}{2} \Delta u + A  \,  u , \\
\displaystyle (\Box +1 ) A = - \vert u \vert^2 .
\end{array} \right.
\leqno(\text{SKG})
$$
For $ \chi \not \equiv \indic_\RR $,
the interplay in \eqref{PSKG} between $ u $ and $ A $ becomes non local. The cutoff function $ \chi $ is introduced as an ultraviolet regularization motivated by renormalization in quantum field theory (QFT). Its role is to suppress, or at least attenuate, the contribution of high-frequency modes ∣$ \vert k \vert \gg 1 $, thereby preventing ultraviolet divergences in QFT.

\subsection{Change of perspective}\label{chnageperspective}
In Paragraph \ref{subsec:microlocalap}, we explain how to replace a Schr\"odinger equation like \eqref{PSKG1} by a Moyal equation which is written on the phase space and which gives access to the underlying microlocal properties. In Paragraph \ref{subsec:Kleing}, we look at \eqref{PSKG2} as an ordinary differential equation with parameter $ k \in \RR^d $. This furnishes a practical way to compute $ A $ on the Fourier side.


\subsubsection{The Moyal interpretation of the Schr\"odinger part} \label{subsec:microlocalap}
To emphasize our main perspective, we first recall the formal derivation of the Moyal equation from the first principles of Quantum Mechanics according to the seminal work of Moyal and Baker \cite{MR95025,MR29330}. To simplify the presentation, we restrict our attention in this paragraph to the one-dimensional case $d=1$. Consider a quantum particle represented
by the time-dependent wave function $ u(t,x) $. Given some
Hamiltonian $ \hat H $, the motion of $ u $  is governed by the Schr\"odinger equation
\begin{equation}
\label{intro.eq.schrod}
i\hslash\partial_t u(t)=\hat H u(t).
\end{equation}
Let $x$ and $\xi $ be the position-momentum canonical variables. We assume that $\hat H$ is the Weyl quantization of a real-valued function $H(x,\xi)$. Then, the Wigner transform $ v(t) $ of the nucleon wave function $ u(t)$, defined by
\begin{equation}
\label{intro.eq.wigner}
v(t,x,\xi)=\frac{1}{\pi \hslash} \int_{\mathbb{R}} \overline{u(t,x+y)} \, u
(t,x-y) e^{2i \xi y/\hslash} dy,
\end{equation}
satisfies the Moyal equation
\begin{equation}
\label{intro.eq.moyal}
i\hslash\partial_t v(t)=  H\star v(t)- v(t)\star  H ,
\end{equation}
where the Moyal star product $\star$ is given by the symbolic composition formula
\begin{equation}
\label{intro.eq.starprod}
f\star g=\exp\bigl(i\hslash/2 \, (\partial_x \partial_{\tilde \xi} -\partial_\xi \partial_{\tilde x})\bigr) \bigl( f(x,\xi) g(\tilde x,\tilde \xi) \bigr)_{\mid(x,\xi)=(\tilde x,\tilde \xi)} .
\end{equation}
In \eqref{intro.eq.starprod}, the exponential is understood through its formal power series expansion. The derivatives are taken with respect to the variables $x,\xi,\tilde x$, and $\tilde \xi $, after which one sets $(x,\xi)=(\tilde x,\tilde \xi)$.
So, the Moyal equation \eqref{intro.eq.moyal} describes the evolution of the time-dependent Wigner function $ v(t,x,\xi)$ of a quantum particle
in the phase space. It can be reformulated using the Moyal bracket into
\begin{equation}
\label{intro.eq.moyal.bis}
\partial_t v (t)= \{H,v(t)\}_{\mathrm{Moyal}},
\end{equation}
where
\begin{equation}
\label{intro.eq.moyalbracket}
\{f,g\}_{\mathrm{Moyal}}= \frac{1}{i\hslash} (f\star g-g\star f)=\frac{2}{\hslash} \sin\bigl(\hslash/2\, (\partial_x \partial_{\tilde \xi} -\partial_\xi \partial_{\tilde x})\bigr) \bigl( f(x,\xi) g(\tilde x,\tilde \xi) \bigr)_{\mid(x,\xi)=(\tilde x,\tilde \xi)}.
\end{equation}
The preceding expression involving the sine function is interpreted via its formal power series expansion, exactly as in the definition of the star product \eqref{intro.eq.starprod}. According to Baker \cite{MR95025}, a Fourier integral representation of the Moyal bracket is
$$
\{f,g\}_{\mathrm{Moyal}}(x,\xi)=\frac{2}{\hslash\pi ^{2}}\int_{\mathbb{R}^4} f(x+x',\xi+\xi')g(x+x'',\xi+\xi'')\sin \bigl({\frac{2}{\hslash}}(x'\xi''-x''\xi')\bigr) dx'\, d\xi'\, dx'' \, d\xi'' .
$$
The Moyal equation furnishes a  straightforward  access to a microlocal phase-space  analysis of the  Schr\"odinger equation \eqref{intro.eq.schrod}. Within this alternative viewpoint, one can for instance  address the problem of propagation of microlocal regularities  (versus singularities) or the problem of scattering, see e.g.~\cite{MR393884,MR783182,MR1982788,MR1603761}. Such subject is related to quantum optics where it is common to use the
Wigner quasi-probability distribution in order to derive quantum kinetic equations.   It is used for instance to describe Rabi oscillations and spontaneous emission phenomena \cite{MR789707,MR2271425}. Furthermore, the Wigner distribution is particularly well adapted to the study of atomic motion in electromagnetic fields. Since its introduction in the 1940s, it has become a fundamental tool in quantum mechanics, both from the theoretical and experimental viewpoints. Notably, it gives rise to an equivalent formulation of quantum mechanics, the \emph{Wigner representation}, and naturally leads to the Moyal evolution equation \cite{MR95025,MR29330}.


\subsubsection{The complex  interpretation of the Klein--Gordon part} \label{subsec:Kleing}
Instead of working directly with the real-valued field $A$, with $ \omk $ as in \eqref{Japanesebracket}, it is convenient to introduce the associated complex meson field
\begin{equation} \label{introdealpha}
\alpha (t,k) := (2 \pi)^{-d/2} \bigl \lbrack \omk^{1/2}  \cF_x \bigl( A(t,\cdot) \bigr) (k) + i \omk^{-1/2} \cF_x \bigl( \part_t A(t,\cdot) \bigr) (k) \bigr \rbrack \, .
\end{equation}
At time $ t = 0 $, we find
\begin{equation} \label{SKGreinter2ini}
\alpha_0 (k) := (2 \pi)^{-d/2}\bigl \lbrack  \omk^{1/2}  \cF_x ( A_0 ) (k) + i \omk^{-1/2} \cF_x (A_1) (k) \bigr \rbrack \, .
\end{equation}
As explained in Paragraph~\ref{mesonfield}, the Klein--Gordon equation~\eqref{PSKG2} can be reformulated as the following ordinary differential equation for the complex scalar valued field $ \alpha(t,k) $, where $ k \in \RR^d $ is a parameter and $t\in \RR$:
\begin{equation}\label{SKGSyequibis}
\partial_t \alpha + i \omk \alpha + i (2 \pi)^{-d/2}  \omk^{-1/2} \, \chi (k) \, \cF_x \bigl( \vert u(t,\cdot) \vert^2 \bigr)(k) = 0 .
\end{equation}


\subsection{Fourier--Moyal interpretation of $ \text{SKG}_{\chi} $} \label{subsec:FourierMoyalin}
The purpose of this work is to initiate a microlocal analysis of the system $\text{SKG}_{\chi}$, which is substantially more involved than the Schr\"odinger equation~\eqref{intro.eq.schrod}. Our approach combines the microlocal framework presented in Paragraph~\ref{subsec:microlocalap} with the complex formulation of the Klein--Gordon field introduced in Paragraph~\ref{subsec:Kleing}.


\subsubsection{Fourier--Moyal analysis with $ A $ incorporated} \label{subsubsec:Aincor} The equation \eqref{PSKG1} is clearly of the form  \eqref{intro.eq.schrod} with $ \hslash = 1 $ and with the effect of the meson field $ A $ entering in the definition of the Schr\"odinger Hamiltonian $ \hat H $ through
$$ \hat H \equiv \hat H_A := - \frac{1}{2} \Delta + \chi(D) A \, . $$
We follow the same guiding idea as above.. Let $u(t,x)$ be the Schr\"odinger wave function solving the equation \eqref{PSKG1}. Let $v(t,x,\xi)$ be its associated Wigner function \eqref{wignerforv}. In the case of $ \hat H_A $, the Moyal equation \eqref{intro.eq.moyal.bis} can be formulated as
\begin{equation}\label{KGB1}
\part_t v + \xi \cdot \nabla_x v + \cK \bigl( \chi(D) A \bigr) *_\xi v  = 0 ,
\end{equation}
where the action of $ \cK $ is given by \eqref{formulapourOp}.
As a substitute for $ v $, we can consider the Fourier transform of $ v (t,\cdot) $ with respect to $ (x,\xi) $, which is denoted by $ w (t,k,\eta) $. More precisely, on the phase space $ \RR^d \times \RR^d $, we look at the Fourier--Wigner transform $ w (t,\cdot )$ of $u(t,\cdot)$ which is
 \begin{equation}\label{defunknownw}
\quad w(t,k,\eta) := \frac{1}{(2 \pi)^d} \int_{\RR^d} \! \int_{\RR^d} \! \int_{\RR^d} e^{-ik\cdot x - i \eta \cdot \xi - i \xi \cdot y} \,
u \Bigl( t, x+ \frac{y}{2} \Bigr) \bar u\Bigl( t, x- \frac{y}{2} \Bigr) \ dx dy d\xi .
 \end{equation}
 Recalling that $\mathrm{W}$ denotes the Wigner transform, and using the notation introduced in~\eqref{crosswigner}, we equivalently define\footnote{For simplicity, we set $\hslash=1$. It is more convenient to work with the Fourier--Wigner transform $w(t,\cdot)$ associated with $u(t,\cdot)$ rather than with its Wigner transform $\mathrm{W}(u(t,\cdot))$. Our definition of the Fourier--Wigner transform differs slightly from the standard convention adopted in~\cite{MR1639461}. We also note that $w(t,\cdot)$ can equivalently be defined by the formula~\eqref{autreformulapourwbis}.}
 \begin{equation}\label{defunknownwbis}
\quad w(t,\cdot) := (2 \pi)^{-d} \, \cF_{x,\xi} \circ \text{W} \bigl( u(t,\cdot)\bigr) \, , \qquad \text{W}(u):= \mathbb W(u,u) \, .
 \end{equation}
 From \eqref{KGB1}, we can deduce the transport equation (of Vlasov type)
 \begin{equation}\label{KGB1intermediaire}
\part_t w - (k \cdot \nabla_\eta) w + (2 \pi)^{-d} \cF_{\! x,\xi} \bigl\lbrack \cK \bigl( \chi(D) A \bigr) \bigr\rbrack *_{k} w = 0 .
\end{equation}


\subsubsection{The Yukawa bilinear map} \label{subsubsec:Yuka}
Working with $w$ amounts to replace the wave function $u$ by its Fourier--Wigner transform in \eqref{PSKG2} or, equivalently, in \eqref{SKGSyequibis}. Furthermore, owing to the identity
\[
\mathcal{F}_x\bigl(|u(t,\cdot)|^2\bigr)(k)=w(t,k,0),
\]
which follows from~\eqref{recallwigff}, the Klein--Gordon equation in~\eqref{SKGSyequibis} becomes \emph{linear} in the unknown $w$, whereas it is quadratic in $u$ in the original formulation~\eqref{PSKG2}. Consequently, the field $A(t,\cdot)$ can be expressed explicitly as the image of $(\alpha_0,w)$ under a linear operator. This observation is a key ingredient of our approach. Indeed, after substituting this representation of $A(t,\cdot)$ into~\eqref{KGB1intermediaire}, the interaction term becomes the sum of a bilinear operator acting on $(\alpha_0,w)$ and a quadratic operator acting on $(w,w)$. 

To simplify the notation, we introduce the auxiliary cutoff function
\begin{equation}
\label{cutofftichi}
\tilde{\chi}(k):=\omk^{-1}\chi(k)^2.
\end{equation}
It follows from~\eqref{voevenchi} that
\begin{equation}
\label{voevenchitilde}
\tilde{\chi}\in L^p(\mathbb{R}^d),
\qquad
\forall\, p\in(d,\infty].
\end{equation}

The {\it Yukawa bilinear map} is defined  by
 \begin{equation}\label{bilitodefine}
\qquad \cB_{\tilde \chi} \lbrack w_1 , w_2 \rbrack (k,\eta) := \frac{2}{(2 \pi)^d} \int_{\RR^d} \sin (\tilde k \cdot \eta /2 ) \, \tilde \chi(\tilde k) \, \cI w_1(\tilde k,0) \,
\cI w_2 (k-\tilde k, \eta) \, d \tilde k ,
\end{equation}
 where
 \begin{equation}\label{symetrydeIenketa}
 \cI w_j (k,\eta) := \frac{1}{2} \bigl \lbrack w _j(k,\eta) + \bar w_j (-k,-\eta)  \bigr \rbrack , \quad \text{ for } j=1,2.
 \end{equation}
 We show in Proposition \ref{Equivalenceofsystems}
 that all smooth rapidly decreasing solutions $ (u,A) $ to (\ref{PSKG})-(\ref{inidataSKGinif}) give rise to solutions $ w $ to the following \emph{Fourier--Moyal (FM) equation}
\begin{equation}\label{transporteqonw}
\part_t w - (k \cdot \nabla_\eta) w + \cL_t w + \cQ_t (w) = 0 \, ,
 \end{equation}
 completed with the initial data
 \begin{equation}\label{inidatatransporteqonw}
\qquad \quad w(0,\cdot) = w_0 (\cdot) := \frac{1}{(2 \pi)^d} \int_{\RR^d} \! \int_{\RR^d} \! \int_{\RR^d} e^{-ik\cdot x - i \eta \cdot \xi - i \xi \cdot y} \,
u_0 \Bigl( x+ \frac{y}{2} \Bigr) \bar u_0 \Bigl( x- \frac{y}{2} \Bigr) \, dx dy d\xi \, .
 \end{equation}
 Given some initial data $\alpha_0$ which is expressed in terms of $(A_0,A_1)$ as in  \eqref{SKGreinter2ini} and which can be put in the form  
 \begin{equation}\label{def.alpha.tilde.alpha}
  \alpha_0 = \chi \, \tilde \alpha_0 ,
  \end{equation}
   the action $ \cL_t w $ is linear with respect to $ w $. It is given by\footnote{Retain that $ \tilde \chi $ (and not $ \chi $) appears in the definition of $ \cL_t $. The map $ \cL_t $ may be defined by either of the formulas below, depending on the context. The right part is more general because it does not assume a factorization of $ \alpha_0 $ by $ \chi $; the left part is more practical for our future goals.}
 \begin{equation}\label{int.defdecLcQt}
 \ \cL_t w := \cB_{\tilde \chi} \bigl \lbrack f_{\tilde \alpha_0}(t,\cdot) , w  (t,\cdot) \bigr \rbrack = \cB_{\omk^{-1} \chi} \bigl \lbrack f_{\alpha_0}(t,\cdot) , w  (t,\cdot) \bigr \rbrack \, , 
 \end{equation}
with the convention
\begin{equation}\label{int.defdecLcQtcomp} 
f_{\tilde \alpha_0}(t,k,\eta) := (2 \pi)^{d/2}e^{-i \omk t} \, \omk^{1/2} \, \tilde \alpha_0 (k) \,  .
\end{equation}
 On the other hand, the expression $ \cQ_t (w) $ is quadratic in $ w $, defined as
 \begin{equation}\label{ilfautajouter}
\qquad \ \cQ_t (w) := - \int_0^t \cB_{\tilde \chi} \bigl \lbrack g_w(s,t,\cdot), w (t,\cdot) \bigr \rbrack \, ds \, ,
\qquad g_w(s,t,k,\eta) := i e^{i \omk (s-t)} w(s,k,0) \, .
\end{equation}
For clarity, we have to explain the origin and the role of $ \cL_t $ and $ \cQ_t $. In fact, the solution $ A $ to \eqref{PSKG2} can be split into two parts. It can be decomposed into a sum $ A = A_l + A_n $ with $ A_l $ and $ A_n $ defined as indicated below:
\begin{itemize}
  \item The component $ A_l $ is obtained by solving the homogeneous wave equation $ (\Box +1 ) A_l = 0 $ with initial data $ A_0 $ and $ A_1 $ as in \eqref{inidataSKGinif}. After interpretation in terms of $ \alpha $, it generates the function $ f_{\tilde \alpha_0} (t,\cdot) $ which can directly be computed from $ \tilde \alpha_0 $ as in \eqref{int.defdecLcQt}. Thus, at the level of \eqref{transporteqonw}, the impact of $ A_l $ is displayed by the {\it linear} action $ \cL_t w $.
  \item The second component $ A_n $ is extracted by looking at the inhomogeneous wave equation $ (\Box +1 ) A_n = - \cF^{-1} (\chi) * \vert u \vert^2 $ with zero initial datum. After substitution inside \eqref{PSKG1}, it yields the {\it bilinear} integral operator $ \cQ_t (w) $. The presence of $ \cQ_t (w) $ comes from
  the interplay (in general non local)  between the nucleon $ u $ and the meson field $ A_n $. Viewed from the perspective of \eqref{transporteqonw}, these exchanges take the form of a self-interaction  (since they now involve the sole unknown $ w $).
\end{itemize}

\noindent The scalar equation \eqref{transporteqonw} together with \eqref{int.defdecLcQt} and \eqref{ilfautajouter} is self-contained. It is indexed by $ \tilde \chi $ and $ \tilde \alpha_0 $, and therefore it can be denoted by $ \text{FM}^{\tilde \alpha_0}_{\tilde \chi} $, or simply $ \text{FM} $.


\subsubsection{The microlocal self-interaction viewpoint} \label{subsubsec:microself}
\noindent 
The Fourier--Moyal formulation~\eqref{transporteqonw} of the system
\eqref{PSKG} exhibits two distinct features. The first is linear, through the transport operator
\[
\partial_t-k\cdot\nabla_\eta
\]
and the action of the linear operator $\mathcal L_t$. The second aspect is quadratic, appearing through the nonlinear operator $\mathcal Q_t$. The introduction of $\mathcal Q_t$ is advantageous for two reasons. First, it simplifies the nonlinear structure of the problem: the original cubic nonlinearity in $u$ is reduced to a quadratic one in $w$. Second, through the bilinear operator $\mathcal B_{\tilde\chi}$, it provides a finer microlocal description of the nonlinear interactions.

\smallskip

\noindent
By construction, the Fourier--Moyal equation~\eqref{transporteqonw} is derived from~\eqref{PSKG}. The functions $w$ obtained from $u$ via~\eqref{defunknownw} are called \emph{prepared} (see Definition~\ref{Prepareddata}). For prepared data, solutions of~\eqref{transporteqonw} correspond directly to solutions of~\eqref{PSKG}. This naturally raises the question of solving the Cauchy problem~\eqref{inidatatransporteqonw}--\eqref{KGB1intermediaire} for prepared initial data $w_0$ arising from wave functions $u_0$ with lower regularity than is usually assumed.

More importantly, the Fourier--Moyal equation~\eqref{transporteqonw} is a self-contained evolution equation. It therefore makes sense to study its Cauchy problem for arbitrary, possibly unprepared, initial data, thereby providing a natural extension of the original Schr\"odinger--Klein--Gordon dynamics. Motivated by this perspective, in the present work we investigate the well-posedness of the initial value problem~\eqref{transporteqonw}--\eqref{inidatatransporteqonw}.
The answer depends crucially on two ingredients:
\begin{itemize}
\item the choice of the cutoff function $\tilde\chi$ (see \eqref{cutofftichi}), which determines the strength of the ultraviolet regularization;
\item the functional framework, which is dictated by the mapping properties of the bilinear operator $\mathcal B_{\tilde\chi}$.
\end{itemize}


\subsection{Notations and functional setting}\label{subsec:Notationsfunctionalsetting} This paragraph collects the different norms that will appear. Together with Appendix \ref{aboutwiener}, it should serve as a reference point for their definitions.
Let $ \cS (\RR^d ; \CC) $ be  the Schwartz space, whose dual is the space  of tempered distributions $ \cS' (\RR^d ; \CC)$. The Fourier transform $ \cF $ and its inverse $ \cF^{-1} $
are defined by
\begin{subequations}\label{Fourierandinverse}
\begin{eqnarray}
& &  \displaystyle \forall u \in \cS (\RR^d ; \CC) , \qquad \hat u (k) \equiv (\cF u) (k) :=  \int_{\RR^d} u(x) e^{-i k \cdot x} \,
dx , \label{Fourierandinverse1} \\
& & \displaystyle \forall u \in \cS (\RR^d ; \CC) , \qquad  u (x) \equiv (\cF^{-1} \hat u) (x) :=  \frac{1}{(2\pi)^d} \int_{\RR^d} \hat u(k)
e^{i k \cdot x} \, dk . \label{Fourierandinverse2}
\end{eqnarray}
\end{subequations}
A function $ u \in \cS (\RR^d ; \CC) $ can always be decomposed into $ u = \cI u + \cJ u $ with
\begin{equation}\label{defdeJI}
\cI u (x) : = \frac{1}{2} \bigl \lbrack u(x) + \bar u (-x) \bigr \rbrack , \qquad \cJ u (x) : = \frac{1}{2} \bigl \lbrack u(x) -
\bar u (-x) \bigr \rbrack .
\end{equation}
Remark that
\begin{equation}\label{propdecIcJ}
 \cI^2 = \cI , \quad \cJ^2 = \cJ , \quad \cI \cJ = \cJ \cI = 0 , \quad \cI i = i \cJ , \quad \cJ i = i \cI .
 \end{equation}
 Retain that
 \begin{equation}\label{pasaoublier}
 \parallel \alpha \parallel_{L^2}^2 =
 \parallel \cI \alpha \parallel_{L^2}^2 + \parallel \cJ \alpha \parallel_{L^2}^2 .
 \end{equation}
For all $ p \in[1,+\infty] $, the operators $ \cI $ and $ \cJ $ are continuous on $ L^p (\RR^d ; \CC) $ with norm $ 1 $. Observe that $ \cI (\cF u) = \cF u $ when the function $ u(\cdot) $ is real valued.
On the other hand, the condition $ \cI u = u $ is satisfied if and only if $ \bar u (-x) = u(x)$. In what follows, we will often deal with functions $ u $ depending on various variables like $ t $, $ x $, $ k $, $ \xi $ or $ \eta $. Then, operators like $ \cI $, $ \cJ $, $ \cF $, $ \cF^{-1} $ or the convolution $ * $ may be applied with respect to a specific set of
these variables (while the others are viewed as parameters). To clearly specify that their action involves a special group of variables among others, like $ x $, $ k $, $ \xi $ or
both $ k $ and $ \eta $, we will use the notations $ \cF_x $, $ \cF^{-1}_{k} $, $ *_\xi $ or $ \cI_{k,\eta} $.

\smallskip


Let $B$ be a Banach space. Throughout this article, we work with weighted Lebesgue spaces on $\RR^d$ with values in $B$. We shall repeatedly use the Japanese bracket
\begin{equation} \label{Japanesebracket}
\omk^s := \bigl( 1 + \vert k \vert^2 \bigr)^{s/2} , \qquad s \in \RR , \qquad \vert k \vert := \Bigl( \sum_{i=1}^d k_i^2
\Bigr)^{1/2} .
 \end{equation}
 
 Let $s\in\RR$ and $p\in[1,+\infty]$. The weighted Lebesgue space $L_s^p(B)$ consists of all measurable functions
$w:\RR^d\to B$ such that (with the usual modification when $p=\infty$)
\begin{equation} \label{normsobolevcondition}
\parallel w \parallel_{L^p_s (B)} \, := \, \parallel \omk^s \, w(k) \parallel_{L^p (B)} = \Bigl( \int_{\RR^d} \omk^{s p} \parallel w (k) \parallel_B^p dk \Bigr)^{1/p}   < + \infty .
 \end{equation}
 When $s=0$, we simply write
\[
L_0^p(B)=L^p(B).
\]
In the  case $p=2$, viewing $k$ as the Fourier variable dual to $x$, the norm $\|\cdot\|_{L_s^2}$ coincides with the Sobolev norm $\|\cdot\|_{H^s}$ of the inverse Fourier transform, namely
\[
\|w\|_{L_s^2}
=
\|\mathcal F_k^{-1}w\|_{H^s}.
\]
Finally, we adopt the standard notation
\begin{equation}
\label{Lp+-}
\qquad L_{s+}^p(B)
:=
\bigcup_{\tilde s>s}
L_{\tilde s}^p(B),
\qquad
L_{s-}^p(B)
:=
\bigcap_{\tilde s<s}
L_{\tilde s}^p(B) \, , \qquad L_{s+}^p(B) \subsetneq L_{s}^p(B) \subsetneq L_{s-}^p(B) \, .
\end{equation}

  \smallskip
 
 Denote by $\mathcal C^0(B)$ the space of continuous $B$-valued functions on $\mathbb R^d$, equipped with the supremum norm $\|\cdot\|_{L^\infty}$. We use the subscripts $b$ and $c$, as in $  \cC^0_b (B) $ and $ \cC^0_c (B) $, to indicate that the functions are bounded and compactly supported, respectively.

  \smallskip

\noindent Given $ \tilde \imath \in ]0,1] $, let $ \mathscr W_{\tilde \imath} $ be the set of concave moduli of continuity\footnote{This means that $ \omega : \RR_+ \rightarrow
  \RR_+ $ is a concave increasing function vanishing at $ 0 $ and continuous at $ 0 $.} $ \omega $ satisfying
\begin{equation} \label{modconw}
\exists \, (c,\bar z) \in  (\RR_+^*)^2 \, ; \qquad
0 \leq c \, z^{\tilde \imath} \leq \omega (z) \, , \qquad \forall z \in [0,\bar z] \, .
    \end{equation}
The space $ \cC^{\omega}_b (B) $ is the subset of $  \cC^0_b (B) $ which is defined by the norm condition\footnote{When $ w $ is as in \eqref{defunknownwbis} and $ B $ is just $ L^\infty $ (instead of 
$ \cC^{\omega}_b $), we recover the 
Sj\" ostrand's class, see Appendix \ref{aboutwiener}.}
  \begin{equation} \label{defdenormcomegab}
  \parallel w \parallel_{\cC^{\omega}_b (B)} := \parallel w \parallel_{L^\infty (B)} + \, \vert w \! \downharpoonright_\omega \, < + \infty ,
    \end{equation}
 where
  \begin{equation} \label{defdenormcomegabdepare}
\qquad  \vert w \! \downharpoonright_\omega :=
\sup_{\eta_1 \not = \eta_2} \ \frac{\parallel w (\eta_1) - w (\eta_2) \parallel_B}{\omega( \vert \eta_1 - \eta_2 \vert)} =
\sup_{\eta \in \RR^d} \
\sup_{\vert h \vert \not = 0} \ \frac{\parallel w (\eta + h) - w (\eta) \parallel_B}{\omega( \vert h \vert)} \, .
  \end{equation}
  The set $ \cC^{\omega}_b (B) $ is therefore the Banach space \cite{MR244747} containing all bounded uniformly  continuous functions with respect to the modulus of
  continuity $ \omega $.
  The option $ \omega (y) = y^\jmath $ with $ \jmath \in ]0,1] $ describes H\"older continuity
  and, in this case, we  simply note $ \cC^{\jmath}_b (B) $. For $ \jmath = 1 $,
  we recover the set $ \cC^1_b (B) \equiv \text{Lip} (B) $ of bounded  Lipschitz continuous functions. The endpoint $ \jmath = 0 $ is excluded because the constant function $ \omega = s^0 \equiv 1 $ is not an admissible modulus of continuity\footnote{For this reason, extending the notation $ \parallel \cdot \parallel_{C^\jmath_b(B)}$ to the case $ \jmath = 0 $ would be misleading. We therefore use the notation $  \parallel \cdot \parallel_{L^\infty(B)}$ instead.}. Other choices for $ \omega $ are of course possible, for instance to
  express the Dini continuity.
Briefly, given a modulus of continuity $ \omega $, the exponents $ \imath $ and $ \jmath $ are respectively related to a minoration (by $ z^{\tilde \imath} $) and a majoration (by $ z^\jmath $) of $ \omega $.

\begin{rem}[About the choice of $ \tilde \imath $]
\label{remtildeimath}
Since every modulus of continuity is concave, the condition~\eqref{modconw} is automatically satisfied with $ \tilde \imath =1 $. Moreover, the inequality
$ \tilde \omega \leq \omega $ implies the continuous embedding $ \cC_b^{\tilde\omega}(B)\hookrightarrow \cC_b^{\omega}(B) $. Hence, given $ w\in \cC_b^{\tilde\omega}(B) $ and any exponent
$ \tilde \imath\in ]0,1[ $, we may replace $ \tilde\omega $ by the smallest concave modulus of continuity $ \omega $ dominating both $  z\mapsto \tilde\omega(z) $ and $ z\mapsto z^{\tilde\imath} $. 
Then $ w\in \cC_b^{\omega}(B) $, and the modulus $ \omega $ satisfies~\eqref{modconw}.
\end{rem}

\noindent    When $ B = \CC $, we omit
  mention of $ B $, so that $ L^p_s \equiv L^p_s (\CC) $ and $ \cC^{\omega}_b \equiv \cC^{\omega}_b (\CC) $.

  \smallskip  

We work with functions $ w (k,\eta) $ defined on the phase space $ \RR^d \times \RR^d $, that is depending on both variables $ k \in \RR^d $ and $ \eta \in \RR^d $. Given $ k \in \RR^d $, we measure the regularity of $ w(k,\cdot) $ with respect to $ \eta $ in terms of $ \cC^\omega_b $. Then, we want to control separately through  $ L^p $-norms the integrability of the  functions $ k \mapsto \parallel w(k,\cdot) \parallel_{L^\infty} $ and $ k \mapsto \vert w (k,\cdot)
  \! \downharpoonright_\omega  $. To this end, we implement $ L^p $-norms of  weighted versions of \eqref{defdenormcomegab}, where the supnorm and the seminorm are multiplied by $ \omk^{s+\imath} $ and $ \omk^\imath $. With this in mind, define the space $ L^{p}_{s,\imath} (\cC^\omega_b) $ by the norm condition
\begin{equation} \label{calculnormcontinuouscondition}
\begin{array}{rl}
\parallel w \parallel_{L^{p}_{s,\imath} (\cC^\omega_b)} \! \! \! & := \, \parallel k \mapsto \omk^{\imath} \parallel w (k,\cdot) \parallel_{L^\infty} + \, \vert w (k,\cdot)
  \! \downharpoonright_\omega \parallel_{L^p_s} \smallskip \\
  \ & \ = \, \parallel k \mapsto \omk^{s+\imath} \parallel w (k,\cdot) \parallel_{L^\infty} + \, \omk^s \ \vert w (k,\cdot)
  \! \downharpoonright_\omega \parallel_{L^p} \, < + \infty \,  .
  \end{array}
 \end{equation}
 With this convention, the space $ L^p_{s,0} ( \cC^{\omega}_b) $ coincides with $ L^p_s (B) $ where $ B = \cC^{\omega}_b $, with exactly the same norm
 $$ \parallel w \parallel_{L^{p}_{s,0} (\cC^\omega_b)} = \parallel w \parallel_{L^{p}_{s} (\cC^\omega_b)} . $$
 Still, for $ \imath \not = 0 $, the space $ L^p_{s,\imath} ( \cC^{\omega}_b) $ is different from $ L^p_{s} ( \cC^{\omega}_b) $.
In the sequel, it is understood that the spaces $ \cC^{\omega}_b $ and $ L^p $ refer, unless otherwise specified, to functions of the variables $\eta$ and $k$.
 It is clear that
\begin{equation} \label{clearthat}
\parallel w \parallel_{L^{p}_{s,\imath} (\cC^\omega_b)} \leq
\parallel w \parallel_{L^{p}_{\tilde s,\tilde \imath} (\cC^\omega_b)} \, , \qquad s \leq \tilde s \, , \quad \imath \leq \tilde \imath \, . \qquad
 \end{equation}
But we also have
\begin{equation} \label{clearthatbis}
\quad \ \parallel w \parallel_{L^{p}_{s,\imath} (\cC^\omega_b)} \leq
\parallel w \parallel_{L^{p}_{\tilde s,\tilde \imath} (\cC^\omega_b)} \, , \qquad
s \leq \tilde s \, , \quad s + \imath = \tilde s + \tilde \imath  \, .
 \end{equation}
In contrast, for $ \cC^0_b $ the situation is different, since
    \begin{equation} \label{defdenormcomegabimathaajouter}
  \parallel w \parallel_{L^p_{s+\imath} (L^\infty)} \leq \, \parallel w \parallel_{L^p_{s,\imath} (\cC^{\omega}_{b})} , \qquad \forall (s,\imath) \in \RR^2 \, .
  \end{equation}

Let $ w $ be as  in \eqref{defunknownw}. By the definition of the Fourier--Wigner transform, multiplication of $w$ by $\omk$ corresponds to differentiating $u$ with respect to the spatial variable $x$. Consequently, for $\imath\in\RR_+^*$, the condition
\[
w\in L^p_{s,\imath}(\cC_b^\omega)
\]
imposes an additional degree of spatial regularity on $u$ compared to the weaker assumption
\[
w\in L^p_{s,0}(\cC_b^\omega).
\]
On the other hand, differentiation of $w$ with respect to the phase-space variable $\eta$ also corresponds to differentiation of $u$ with respect to $x$. It is therefore natural to measure higher regularity by combining derivatives in $\eta$ with powers of the Fourier weight $\omk$. For $l\in\NN$, we thus introduce the higher-regularity counterpart of \eqref{calculnormcontinuouscondition},
 \begin{equation} \label{calculnormcontinuousconditionregular}
\parallel w \parallel_{W^{l,p}_{s,\imath} (\cC^{\omega}_b) } := \sum_{\vert \alpha \vert \leq l} \parallel \omk^{l-\vert \alpha \vert} \,
\part^\alpha_\eta w (k,\eta) \parallel_{L^p_{s,\imath} (\cC^{\omega}_b) } .
 \end{equation}


\section{Fourier--Moyal equation} \label{sec:KGSsystemrevisited}

In Subsection~\ref{subsec:Presentationoftheequations}, we review the state of the art on Schr\"odinger--Klein--Gordon systems and place our work in the context of the existing literature. This also provides the opportunity to highlight its connections with previous results. In Subsection~\ref{subsec:TransitionfromtheSKGsystemtotheSKGSequation}, we give a detailed derivation of the Fourier--Moyal equation~\eqref{transporteqonw} starting from the Schr\"odinger--Klein--Gordon system~\eqref{PSKG}. Subsection~\ref{subsec:Conservedquantities} is devoted to the conserved quantities associated with~\eqref{transporteqonw}.

\noindent 
Throughout this section, we fix an ultraviolet cutoff
\[
\chi\in \cC_c^\infty(\RR^d; \RR),
\]
and do not repeat this assumption thereafter. It is introduced here to ensure that the corresponding solutions $ u $ and $ A $ are smooth and rapidly decreasing.


\subsection{Schr\" odinger--Klein--Gordon system, a panorama}\label{subsec:Presentationoftheequations}
There are various ways to deal with the initial value
 problem (\ref{PSKG})-(\ref{inidataSKGinif}), which mainly depend on the functional framework that is selected and on the methods
that are involved.

A first series of advances concerning regular solutions to SKG has been produced in the late 1970s by Baillon-Chadam and Fukuda-Tsutsumi, see \cite{MR519635,MR515899}
and references therein. They rely on classical $ L^2 $-based energy estimates and  Galerkin
methods.

Another typical result relying  on the regularization properties of the Schr\"odinger and wave operators has been obtained by A.~Bachelot in
\cite[Theorem 3]{MR778979}. Given
\begin{equation}\label{regularinidata}
u_0 \in H^1(\RR^3;\CC) , \qquad A_0 \in H^1(\RR^3;\RR) , \qquad A_1 \in L^2(\RR^3;\RR) ,
\end{equation}
there is a unique global solution to SKG  such that
\begin{equation}\label{regularpourinstantt}
u \in L^\infty \bigl(\RR ; H^1(\RR^3;\CC) \bigr) , \qquad A \in L^\infty \bigl(\RR ; H^1(\RR^3;\RR) \bigr) .
\end{equation}
To construct solutions as in (\ref{regularpourinstantt}), A.~Bachelot develops in \cite{MR778979} some adequate iterative scheme, and then he exploits  compactness arguments.

Today, it is well known \cite{MR2237673,MR2035502} that the existence and uniqueness may be granted for less regular data than in
 (\ref{regularinidata}). Progress in this direction has been made from the 2000's, by following a scheme introduced by J. Bourgain
 \cite{MR1616917} and by using bilinear Strichartz estimates. More recent contributions \cite{MR3128940,MR3725728,MR2852202}
 further examine certain aspects of (\ref{PSKG}) or explore the case of similar systems, always in $ L^2 $-based Sobolev contexts.

In the spirit of~\cite{MR3737034}, an alternative approach to the Schr\"odinger--Klein--Gordon system consists in considering the modified model $\mathrm{SKG}_{\chi}$. When $\chi$ is compactly supported, $\mathrm{SKG}_{\chi}$ may be viewed as a frequency-truncated approximation of SKG that is compatible with the conservation of the regularized total energy $\mathscr E_\chi$. More generally, allowing $\chi$ to be merely bounded leads to a natural extension of the classical SKG system.

Furthermore, the microlocal formulation developed in this work makes it possible to investigate the well-posedness of~\eqref{PSKG}--\eqref{inidataSKGinif} in function spaces beyond the classical Sobolev framework, such as modulation spaces (Appendix~\ref{aboutmodulation}), Wiener amalgam spaces (Appendix~\ref{aboutwiener}), and related spaces. Although this perspective has become well established for nonlinear Schr\"odinger equations~\cite{MR4027018,Chaichenets}, to the best of our knowledge it has not previously been explored for the Schr\"odinger--Klein--Gordon system, nor for its generalized version $\mathrm{SKG}_{\chi}$.

Finally, it is worth emphasizing that weak $L^2$-solutions $w$ of the Fourier--Moyal equation~\eqref{transporteqonw} naturally extend the notion of weak $L^2$-solutions $u$ of~\eqref{PSKG1}; see Lemma~\ref{Propertiesofprepareddata}.

\smallskip


\subsection{Derivation of the Fourier--Moyal equation}\label{subsec:TransitionfromtheSKGsystemtotheSKGSequation}
In Paragraph \ref{nucleonfield}, we first focus on the nucleon field $ u $. In
Paragraph \ref{mesonfield}, we explain the origin of the complex meson field $ \alpha $. In
Paragraph \ref{Yukawabilinearmapdef}, we present the Yukawa bilinear map. This yields in Paragraph
\ref{KGSsystemrevisited} the
Moyal--Klein--Gordon system and then, in Paragraph
\ref{SKGSequationenfin}, the Fourier--Moyal equation. 


\subsubsection{Microlocal formulation  of the nucleon equation}\label{nucleonfield} The Wigner function
that is associated at time $ t $ with $ u (t,\cdot) $ is just (see Paragraph \ref{aboutwigner} for the detailed definition
of $ \mathbb W $ and $ {\rm W} $)
\begin{equation}\label{wignerforv}
v \bigl( t,x,\xi ) := \mathbb W \lbrack u(t,\cdot) , u(t,\cdot) \rbrack (x,\xi) = {\rm W} \bigl( u(t,\cdot) \bigr) (x,\xi) .
\end{equation}
The function $ v (\cdot) $ furnishes the quasi-probability distribution
that is induced by $ u(t,\cdot) $ in the phase space $ \RR^d \times \RR^d $. The Wigner transform $ {\rm W} : u \longmapsto {\rm W}
(u) = \mathbb W [u,u] $ maps $ \cS (\RR^d;\CC) $ into $ \cS (\RR^d \times \RR^d;\CC) $. It is injective but it is not surjective.

\begin{defi}[$ B $-prepared data] \label{Prepareddata}
Let $ B = \cS (\RR^d;\CC) $ or $ B = L^p (\RR^d;\CC) $ with $ 1 \leq p \leq + \infty $.
We say that a function $ w (k,\eta) $ is $ B $-{\it prepared} when it belongs to the subset $ \cF_{x,\xi} \circ {\rm W} (B) $.
\end{defi}

\noindent As explained in \cite{MR2502626}, the real valued function $ v(\cdot) $ given by (\ref{wignerforv}),
with $ u(\cdot) $ satisfying (\ref{PSKG1}), solves the transport equation \eqref{KGB1} where the action of the operator $ \cK : \cS (\RR^d ) \rightarrow \cC^\infty_b (\RR^d \times \RR^d )$ is determined by
\begin{equation}\label{formulapourOp}
\left. \begin{array} {rcl}
\cK(A) (x,\xi) & := & \displaystyle \frac{i}{(2 \pi)^d} \int_{\RR^d} \Bigl \lbrack A \Bigl( x + \frac{y}{2} \Bigr) - A \Bigl( x - \frac{y}{2} \Bigr)
\Bigr \rbrack e^{-i \xi \cdot y} \, dy , \\
\ & = & \displaystyle \frac{i}{\pi^d} \Bigl \lbrack \hat A (2\xi) e^{2 i \xi \cdot x} - \hat A (-2\xi) e^{- 2 i \xi \cdot x} \Bigr \rbrack .
 \end{array} \right.
\end{equation}
According to (\ref{defunknownw}) and (\ref{wignerforv}), we have
\begin{equation} \label{introdew}
w (t,k,\eta) = (2 \pi)^{-d} \, \cF_{x,\xi} \bigl( v (t,\cdot) \bigr) (k,\eta) .
\end{equation}
By construction, the function $ w(t,\cdot) $ is $ \cS $-prepared. In particular, the same applies to $ w_0 $. Moreover, in view of the definition \eqref{defunknownw}, we have
\begin{equation}\label{paritedew}
w (t,k,\eta) = \bar w (t,-k,-\eta) , \qquad
w (t,k,\eta) = \cI_{k,\eta} w (t,k,\eta) .
\end{equation}
This means that the real (resp. imaginary) part of $ w (t,\cdot) $ (and of $ w_0 $) is an even (resp. odd) function on
$ \RR^d \times \RR^d $. Apply the Fourier transform in both variables $ x $ and $ \xi $  to (\ref{KGB1}) to get \eqref{KGB1intermediaire}
where we can compute
\begin{equation}\label{computeFKA}
\cF_{\! x,\xi} \bigl\lbrack \cK (\chi(D) A \bigr) \bigr\rbrack (t,k,\eta) = 2 \, \sin (k \cdot \eta /2 ) \, \chi(k) \, \cF_x \bigl( A(t,\cdot) \bigr) (k) .
\end{equation}
Compared with the wave function $ u(t,x) $, the Fourier--Wigner transform $ w(t,k,\eta) $ is expected to encode finer phase-space information through~\eqref{KGB1intermediaire}. In particular, many estimates and localization arguments formulated in terms of the phase-space variables $ (k,\eta) $ have no counterpart when working solely with the physical variable $ x $, and therefore provide a more refined analytical framework.


\subsubsection{Complex meson fields}\label{mesonfield}
In view of (\ref{recallwigff}), the equation (\ref{PSKG2}) is the same as
\begin{equation}\label{KGB2}
(\Box +1 ) A = - (2 \pi)^{-d} \, \cF^{-1} (\chi) * \Bigl( \int_{\RR^d} v(t,\cdot,\xi) d \xi \Bigr) .
\end{equation}
Now, introduce the complex field $ \alpha $ given by \eqref{introdealpha}.
Since $ A(\cdot) $ and $ \part_t A(\cdot) $ are real valued functions, with $ \cI $ and $ \cJ $ as in (\ref{defdeJI}), we have
\begin{subequations}\label{llllllllll}
\begin{eqnarray}
& & \displaystyle \cI_k \alpha (t,k) = (2 \pi)^{-d/2} \omk^{1/2} \cF_x \bigl( A(t,\cdot) \bigr) (k) , \label{llllllllll1} \\
& & \displaystyle \cJ_k\alpha (t,k) = i (2 \pi)^{-d/2} \omk^{-1/2} \cF_x \bigl( \part_t A(t,\cdot) \bigr) (k) . \label{llllllllll2}
\end{eqnarray}
\end{subequations}

\noindent From \eqref{pasaoublier} and Plancherel theorem, we can deduce that
\begin{equation}\label{pasaoublierbis}
\begin{array}{rl}
 \parallel \omk^{1/2} \alpha (t,\cdot) \parallel_{L^2}^2 \! \! \! & \displaystyle = \,
 \parallel \omk^{1/2} \cI_k \alpha (t,\cdot) \parallel_{L^2}^2 + \parallel \omk^{1/2} \cJ_k \alpha (t,\cdot) \parallel_{L^2}^2 \smallskip \\
 & \displaystyle = \, (2 \pi)^{-d} \parallel
  \omk \cF_x \bigl( A(t,\cdot) \bigr) \parallel_{L^2}^2 + (2 \pi)^{-d} \parallel
  \cF_x \bigl( \part_t A(t,\cdot) \bigr)  \parallel_{L^2}^2 \smallskip \\
 & \displaystyle = \, \parallel A(t,\cdot)  \parallel_{L^2}^2 + \parallel \nabla_x A(t,\cdot)  \parallel_{L^2}^2 + \parallel \part_t A(t,\cdot) \parallel_{L^2}^2 .
 \end{array}
 \end{equation}
The last line of \eqref{pasaoublierbis} appears in the definition of the total energy, see   \eqref{eq.skg.energytr}.
The factor $ (2 \pi)^{-d/2} $ in \eqref{introdealpha} is  calibrated in order to make this (part of the) total energy coincide with the square of the $ L^2_{1/2} $-norm of $ \alpha $.

\noindent Of course, we can recover $ A (t,\cdot) $ and $ \part_t A (t,\cdot) $ (resp. $ A_0 $ and $ A_1 $) from $ \alpha (t,\cdot) $
(resp. $ \alpha_0 $) through
\begin{subequations}\label{recoveralpha0}
\begin{eqnarray}
& & \displaystyle \quad A (t,\cdot) = (2 \pi)^{d/2} \cF_k^{-1} \bigl( \omk^{-1/2} \cI \alpha (t,\cdot) \bigr) , \qquad \quad \ \, A_0 = (2 \pi)^{d/2} \cF_k^{-1} \bigl( \omk^{-1/2}
\cI \alpha_0 \bigr) , \label{recoveralpha01} \\
& & \displaystyle \quad \part_t A (t,\cdot) = - i (2 \pi)^{d/2} \cF_k^{-1} \bigl( \omk^{1/2} \cJ \alpha (t,\cdot ) \bigr) , \qquad A_1  = - i (2 \pi)^{d/2} \cF_k^{-1} \bigl( \omk^{1/2} \cJ \alpha_0 \bigr) .
\label{recoveralpha02}
\end{eqnarray}
\end{subequations}

\noindent Apply $ \part_t $ to (\ref{llllllllll1}) and compare the resulting relation to (\ref{llllllllll2}) in order to get
\begin{equation} \label{eqourcIalpha}
\part_t \cI_k\alpha = - i \omk \cJ_k\alpha .
\end{equation}
On the other hand, using (\ref{KGB2}) and (\ref{llllllllll2}), we find
\begin{equation} \label{eqourcJalpha1}
\begin{array}{rl}
\part_t \cJ_k\alpha = \! \! \! & \displaystyle + i (2 \pi)^{-d/2} \omk^{-1/2} \cF_x \bigl \lbrack (\Delta -1)  A(t,\cdot) \bigr \rbrack \\
\ & \displaystyle - i \, (2 \pi)^{-3d/2} \, \omk^{-1/2} \, \chi(k) \, \cF_x \Bigl( \int_{\RR^d}
v(t, \cdot ,\xi) d \xi \Bigr) .
\end{array}
\end{equation}
Then, from (\ref{llllllllll1}) and (\ref{introdew}), we obtain
\begin{equation} \label{eqourcJalpha2}
\part_t \cJ_k\alpha = - i \omk \cI_k\alpha - i (2 \pi)^{-d/2} \omk^{-1/2} \, \chi (k) \, w(t,k,0) .
\end{equation}
By adding (\ref{eqourcIalpha}) and (\ref{eqourcJalpha2}), since $ \alpha = \cI_k\alpha + \cJ_k\alpha $, we recover
\begin{equation}\label{SKGSyequi}
\partial_t \alpha + i \omk \alpha + i (2 \pi)^{-d/2}  \omk^{-1/2} \, \chi (k) \, w(t,k,0) = 0 ,
\end{equation}
which is the same as \eqref{SKGSyequibis}.
In this way, we rewrite the Klein--Gordon equation~\eqref{PSKG2} in terms of the complex field $\alpha$. This transformation is classical (see, e.g.,~\cite{MR3737034}). We adopt this formulation because the integral representation of $\alpha$ will be used repeatedly throughout the paper. Indeed, unlike the original Klein--Gordon equation~\eqref{PSKG2}, the equivalent equation~\eqref{SKGSyequi} is a non-homogeneous linear equation and therefore admits the explicit representation
\begin{equation}\label{integrateddealpha}
\alpha(t,k)
=
e^{-i\omk t}\,\alpha_0(k)
-i(2\pi)^{-d/2}
\int_0^t
e^{i\omk(s-t)}
\omk^{-1/2}\,
\chi(k)\,
w(s,k,0)\,ds.
\end{equation}


\subsubsection{The Yukawa bilinear map}\label{Yukawabilinearmapdef}
In view of \eqref{computeFKA}
and \eqref{llllllllll1}, because $ w = \cI w $, we have
\begin{equation}\label{defedeqy}
(2 \pi)^{-d} \cF_{\! x,\xi} \bigl\lbrack \cK \bigl( \chi(D) A \bigr) \bigr\rbrack *_{k} w = (2 \pi)^{d/2} \, \cB_\chi [ \omk^{-1/2} \alpha , w ] .
\end{equation} 
By this way, the  Yukawa Bilinear map 
(or $ \cY \cB $-map in abbreviated form) of \eqref{bilitodefine} comes into play. It is easy to check that
\begin{equation}\label{bilitodefine0}
\begin{array} {rcl}
\cB_{\chi} \, : \, \cS (\RR^d \times \RR^d; \CC) \times \cS (\RR^d \times \RR^d; \CC) & \longrightarrow & \cS (\RR^d \times \RR^d; \CC) \\
(w_1,w_2) & \longmapsto & \cB_{\chi} [ w_1, w_2 ] .
\end{array}
\end{equation}
Observe that
\begin{equation}\label{paritedewforB}
\cB_{\chi} [ w_1, w_2 ] = \cB_{\chi} [ \cI w_1, \cI w_2 ]  = \cI \bigl( \cB_{ \chi} [ w_1, w_2 ] \bigr) .
\end{equation}
The evaluation of the bilinear operator $\cB_{\chi}[w_1,w_2]$ requires the trace of $w_1$ at $\eta=0$. Consequently, some minimal regularity of $w_1$ with respect to the variable $\eta$ is needed in a neighborhood of the origin. The most natural assumption is continuity. Recalling the convention~\eqref{normsobolevcondition} together with~\eqref{calculnormcontinuouscondition}, one may choose either the space $L_k^p(\cC^0_{b,\eta})$ or $\cC^0_{b,\eta}(L_k^p)$. We adopt the former, since it is better suited to the propagation properties of the transport operator
\[
\partial_t+k\cdot\nabla_\eta.
\]
Accordingly, throughout the sequel we assume
\begin{equation}\label{regularitydewcritique}
w \in L_k^p(\cC^0_{b,\eta})
\hookrightarrow
L_k^p(L_\eta^\infty),
\qquad
1\le p\le+\infty,
\end{equation}
where the continuous embedding follows from~\eqref{calculnormcontinuouscondition}.

\begin{lem}[Basic continuity properties of the $ \cY \cB $-map] \label{continuityYBbasic}
 Fix indices $ 1\leq p, \tilde p, \tilde q , r
\leq + \infty $  such that $ 1/p + 1/\tilde p = 1 + 1/ r $.
The $ \cY \cB $-map  extends to a continuous bilinear map
\[ \cB_{\chi} : L^p_k(\cC^0_{b,\eta}) \times L^{\tilde p}_k (L^{\tilde q}_\eta) \longrightarrow L^r_k (L^{\tilde q}_\eta) . \]
More precisely, we have
\begin{equation}\label{continuityYBbasicestimates}
\parallel \cB_{\chi}  [ w_1, w_2 ] \parallel_{L^r_k (L^{\tilde q}_\eta)} \, \leq \frac{2}{(2 \pi)^d} \, \parallel {\chi} \parallel_{L^\infty} \, \parallel w_1 \parallel_{L^p_k(L^{\infty}_\eta)} \, \parallel w_2
\parallel_{L^{\tilde p}_k (L^{\tilde q}_\eta)} .
\end{equation}
\end{lem}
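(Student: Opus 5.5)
The approach is a pointwise bound in $\eta$, followed by Minkowski's inequality in $\eta$ and Young's convolution inequality in $k$. We first prove \eqref{continuityYBbasicestimates} for $w_1,w_2\in\cS(\RR^d\times\RR^d;\CC)$, where $\cB_\chi[w_1,w_2]$ is well defined by \eqref{bilitodefine0}. We then extend by density and continuity. For $w_1$ the approximation should be made in $L^p_k(\cC^0_{b,\eta})$; if Schwartz functions are not dense there (for instance when $p=\infty$), we instead observe that formula \eqref{bilitodefine} makes sense directly for such data. Indeed, $w_1(\cdot,0)$ is well defined as a measurable function of $k$ thanks to continuity in $\eta$, and the bound below shows that the integral converges absolutely for almost every $k$.

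\textbf{Step 1 (pointwise bound).} Fix $(k,\eta)$. Using $|\sin|\le 1$ and $|\chi|\le \parallel\chi\parallel_{L^\infty}$, we get
\[
\bigl|\cB_\chi[w_1,w_2](k,\eta)\bigr|\le \frac{2}{(2\pi)^d}\parallel\chi\parallel_{L^\infty}\int_{\RR^d}|\cI w_1(\tilde k,0)|\,|\cI w_2(k-\tilde k,\eta)|\,d\tilde k .
\]
The first factor satisfies $|\cI w_1(\tilde k,0)|\le \tfrac12\bigl(\parallel w_1(\tilde k,\cdot)\parallel_{L^\infty}+\parallel w_1(-\tilde k,\cdot)\parallel_{L^\infty}\bigr)=:a(\tilde k)$. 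This is where continuity in $\eta$ is used, since it makes the trace at $\eta=0$ meaningful and controlled by the sup. The function $a$ satisfies $\parallel a\parallel_{L^p}\le\parallel w_1\parallel_{L^p_k(L^\infty_\eta)}$ by the triangle inequality and the invariance of $L^p$ under $k\mapsto -k$.

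\textbf{Step 2 (norm in $\eta$).} Take the $L^{\tilde q}_\eta$ norm and apply Minkowski's integral inequality to move it inside the $\tilde k$ integral. This gives
\[
\parallel\cB_\chi[w_1,w_2](k,\cdot)\parallel_{L^{\tilde q}}\le C\int a(\tilde k)\,b(k-\tilde k)\,d\tilde k ,
\]
with $b(k'):=\parallel \cI w_2(k',\cdot)\parallel_{L^{\tilde q}_\eta}$. Since $\eta\mapsto-\eta$ preserves $L^{\tilde q}$, we have $b(k')\le\tfrac12\bigl(\parallel w_2(k',\cdot)\parallel_{L^{\tilde q}}+\parallel w_2(-k',\cdot)\parallel_{L^{\tilde q}}\bigr)$, and therefore $\parallel b\parallel_{L^{\tilde p}}\le\parallel w_2\parallel_{L^{\tilde p}_k(L^{\tilde q}_\eta)}$.

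\textbf{Step 3 (Young).} The right-hand side is $C\,(a*b)(k)$. Young's inequality with $1/p+1/\tilde p=1+1/r$ gives $\parallel a*b\parallel_{L^r}\le\parallel a\parallel_{L^p}\parallel b\parallel_{L^{\tilde p}}$, which yields \eqref{continuityYBbasicestimates} with the stated constant $2(2\pi)^{-d}\parallel\chi\parallel_{L^\infty}$.

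The main (mild) obstacle is the measurability and the extension issue in the endpoint cases $p,\tilde p,\tilde q=\infty$, where density fails. We handle it by defining $\cB_\chi$ directly through \eqref{bilitodefine}. Measurability of $\tilde k\mapsto w_1(\tilde k,0)$ follows from $w_1\in L^p_k(\cC^0_{b,\eta})$ (a strongly measurable map into $\cC^0_b$, composed with evaluation at $0$). Steps 1–3 then apply verbatim, showing both absolute convergence and the estimate; bilinearity is immediate from the formula.
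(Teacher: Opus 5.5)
Your proof is correct and follows the same route as the paper. You bound the integrand pointwise, use Minkowski to move the $L^{\tilde q}_\eta$ norm inside and obtain a convolution in $k$ of $\|w_1(\tilde k,\cdot)\|_{L^\infty_\eta}$ with $\|\cI w_2(k-\tilde k,\cdot)\|_{L^{\tilde q}_\eta}$, and conclude with Young's inequality. Your explicit handling of the reflection in $\cI$, and your choice to define $\cB_\chi$ directly by \eqref{bilitodefine} rather than by density, fill in details the paper leaves implicit. Note that density in fact fails for every $p$, not just $p=\infty$, since $\cC^0_b$ contains functions not vanishing at infinity, such as the $\eta$-independent $w_1=\alpha(k)$ mentioned right after the lemma. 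Your direct definition covers this case anyway.
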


\begin{proof}   Indeed, one readily observes that
\[ \parallel \cB_{\chi} \lbrack w_1, w_2 \rbrack (k,\cdot) \parallel_{L^{\tilde q}_\eta} \leq \frac{2}{(2 \pi)^d} \ \parallel {\chi} \parallel_{L^\infty} \int_{\RR^d}
\parallel \cI_{k,\eta} w_1(\tilde k,\cdot) \parallel_{L^\infty_\eta} \, \parallel \cI_{k,\eta} w_2 (k-\tilde k, \eta) \parallel_{L^{\tilde q}_\eta} \,
d \tilde k . \]
The estimate \eqref{continuityYBbasicestimates} then follows directly from Young's convolution inequality.
\end{proof}

\noindent  
In particular, consider the case where $w_1$ is independent of $\eta$, namely
\[
w_1(k,\eta)=\alpha(k),
\]
for some non-zero function $\alpha\in \cC^0_c(\RR^d)$. Then $w_1$ does not belong to
$\cS(\RR^d\times\RR^d;\CC)$, since it exhibits no decay in the $\eta$-variable. Nevertheless,
\[
w_1=\alpha \in L^p_k(\cC^0_{b,\eta}).
\]
Therefore, by the preceding Lemma~\ref{continuityYBbasic}, the bilinear operator
$\cB_\chi\lbrack w_1,w_2\rbrack$ is still well defined for $w_1=\alpha$ and for every $w_2\in L^{\tilde p}_k(L^{\tilde q}_\eta)$.


\subsubsection{The Moyal--Klein--Gordon system}\label{KGSsystemrevisited} The two equations (\ref{KGB1intermediaire}) and (\ref{SKGSyequi}) furnish the self-contained Moyal--Klein--Gordon  coupled system
\begin{subequations}\label{SKGrevisited}
\begin{eqnarray}
& & \displaystyle \part_t w - (k \cdot \nabla_\eta) w + (2 \pi)^{d/2} \cB_{\chi} \lbrack \omk^{-1/2} \alpha, w \rbrack = 0 , \label{SKGrevisited1} \\
& & \displaystyle \partial_t \alpha + i \omk \alpha + i (2 \pi)^{-d/2}  \omk^{-1/2} \, \chi (k) \, w(t,k,0) = 0 , \label{SKGrevisited2}
\end{eqnarray}
\end{subequations}
with initial data issued from (\ref{inidatatransporteqonw}) and (\ref{SKGreinter2ini}), that is
\begin{equation}\label{inidataSKGrevisited}
w(0,\cdot) = w_0 (\cdot) , \qquad \alpha(0,\cdot) = \alpha_0 (\cdot).
 \end{equation}

\begin{lem}[$ \cS $-prepared data are propagated] \label{preparedpropagation} Select any initial data $ (w_0,\alpha_0) $ which is such that
$$ (w_0,\alpha_0) \in \cS (\RR^d \times \RR^d ; \CC)
\times \cS (\RR^d ; \CC) \, , \qquad \exists \, u_0 \in
\cS (\RR^d ; \CC) \, ; \quad w_0 = (2 \pi)^{-d} \cF_{x,\xi} \circ {\rm W} (u_0) \, . $$
Then, there exists a unique solution  $ (w,\alpha) $ to (\ref{SKGrevisited})-(\ref{inidataSKGrevisited}) in $ L^\infty \bigl( \RR_+ ;\cS (\RR^d \times \RR^d ; \CC)
\times \cS (\RR^d ; \CC) \bigr) $. Moreover, we  have
\begin{equation}\label{preparedpropagationattimet}
\forall t \in \RR , \qquad \exists u (t,\cdot) \in \cS (\RR^d ; \CC) \, ; \qquad w(t,\cdot) = (2 \pi)^{-d}\cF_{x,\xi} \circ {\rm W} \bigl( u (t,\cdot) \bigr)  .
\end{equation}
In particular,  $ w $ satisfies the relation $ w = \cI w $ in (\ref{paritedew}).
\end{lem}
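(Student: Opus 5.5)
The idea is to go back to the physical variables. Given $u_0\in\cS$ and $\alpha_0\in\cS$, recover real-valued data $(A_0,A_1)$ from $\alpha_0$ through \eqref{recoveralpha01}--\eqref{recoveralpha02}. Since $\chi\in\cC^\infty_c$, the system $\text{SKG}_\chi$ in \eqref{PSKG} is a smoothed (bounded-operator) perturbation of the free Schr\"odinger and Klein--Gordon flows. Indeed, $\chi(D)A$ is a smooth band-limited function and $\cF^{-1}(\chi)*|u|^2$ is a smoothing operator applied to $|u|^2$.

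\textbf{Step 1: global Schwartz solution of $\text{SKG}_\chi$.} First obtain a global solution $(u,A)$ of $\text{SKG}_\chi$ with $u(t)\in\cS$ and $(A,\partial_tA)(t)\in\cS$.
\begin{itemize}
\item \emph{Local existence} follows from a Picard iteration in $H^s\times H^{s}\times H^{s-1}$ spaces, using Duhamel for both equations.
\item \emph{Globality} follows from conservation of $\|u\|_{L^2}$ (the potential $\chi(D)A$ is real because $\chi$ is even and $A$ is real) together with the energy $\mathcal E_\chi$ of \eqref{eq.skg.energytr}. The interaction term is bounded by $\|\chi(D)A\|_{L^\infty}\|u\|_{L^2}^2\lesssim \|A\|_{L^2}\|u_0\|_{L^2}^2$, so the energy controls $\|u\|_{H^1}+\|(A,\partial_tA)\|_{H^1\times L^2}$.
\item \emph{Higher Sobolev norms} grow at most exponentially. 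The coupling terms are smoothing, since $\|\chi(D)A\|_{W^{m,\infty}}\lesssim_m\|A\|_{L^2}$.
\item \emph{Weighted norms.} The quantities $\|x^\beta u\|_{L^2}$ are propagated by commuting $x^\beta$ with $i\partial_t+\frac12\Delta$ and Gronwall. The decay of $A$ is propagated through the finite-speed Duhamel formula, where the source $\cF^{-1}(\chi)*|u|^2$ is Schwartz.
\end{itemize}

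\textbf{Step 2: existence for the system.} Set $w(t):=(2\pi)^{-d}\cF_{x,\xi}\circ{\rm W}(u(t))$ and let $\alpha(t)$ be given by \eqref{introdealpha}. By the derivation of Paragraphs~\ref{nucleonfield}--\ref{KGSsystemrevisited}, namely \eqref{KGB1intermediaire}, \eqref{computeFKA}, \eqref{SKGSyequi} and \eqref{defedeqy}, the pair $(w,\alpha)$ solves \eqref{SKGrevisited}--\eqref{inidataSKGrevisited}. It lies in $\cS\times\cS$ because ${\rm W}$ maps $\cS$ to $\cS$. Uniform-in-time bounds on bounded intervals follow from Step 1; I read the statement's $L^\infty(\RR_+;\cdot)$ as locally bounded in time. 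This construction gives \eqref{preparedpropagationattimet} directly, and $w=\cI w$ follows from \eqref{paritedew}.

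\textbf{Step 3: uniqueness, the main obstacle.} Uniqueness must be shown among all solutions of \eqref{SKGrevisited}, not only prepared ones. Take two solutions $(w,\alpha)$ and $(w',\alpha')$ in $L^\infty_t(\cS\times\cS)$ with the same data.
\begin{itemize}
\item Solve \eqref{SKGrevisited2} explicitly via \eqref{integrateddealpha}. This gives $\|\alpha(t)-\alpha'(t)\|_{L^\infty_k}\lesssim\int_0^t\|w-w'\|_{L^\infty_{k,\eta}}\,ds$.
\item Take the difference of the equations \eqref{SKGrevisited1}. The transport operator $\partial_t-k\cdot\nabla_\eta$ is an isometry on $L^1_k(L^\infty_\eta)$ along its characteristics $\eta\mapsto\eta+tk$.
\item The bilinear terms are estimated with Lemma~\ref{continuityYBbasic} with $p=\infty$, $\tilde p=r=1$, $\tilde q=\infty$. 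The coefficient $\omk^{-1/2}\alpha$ lies in $L^\infty_k$, which handles the term linear in $w-w'$. The term involving $\alpha-\alpha'$ requires $w'\in L^1_k(L^\infty_\eta)$, which holds for a Schwartz function.
\end{itemize}
Gronwall in the norm $L^1_k(L^\infty_\eta)$, coupled with $L^\infty_k$ for $\alpha$, then closes. The only subtlety is that the trace $w(t,k,0)$ must be bounded by $\|w\|_{L^\infty_k(L^\infty_\eta)}$, not by the $L^1_k$ norm. So I would run Gronwall on the combined quantity
\[
\|w-w'\|_{L^1_k(L^\infty_\eta)}+\|w-w'\|_{L^\infty_k(L^\infty_\eta)}+\|\alpha-\alpha'\|_{L^\infty_k}.
\]
Each piece is controlled by Lemma~\ref{continuityYBbasic} with appropriate exponents.
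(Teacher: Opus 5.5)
Your plan follows the paper's own proof. You build $(w,\alpha)$ from a Schwartz solution $(u,A)$ of $\text{SKG}_\chi$ via \eqref{defunknownw} and \eqref{introdealpha}, then invoke uniqueness for \eqref{SKGrevisited}--\eqref{inidataSKGrevisited} to force every Schwartz solution to be prepared. The paper simply declares Schwartz well-posedness and uniqueness for both Cauchy problems ``straightforward'' (with references), so your Steps~1 and~3 supply what it omits. Your reading of $L^\infty(\RR_+;\cdot)$ as locally bounded in time is the right one: already the free part $w_0(k,\eta+tk)$ makes the $\eta$-weighted Schwartz seminorms grow with $t$.

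There is, however, a bookkeeping error in Step~3.
\begin{itemize}
\item The triple $(p,\tilde p,r)=(\infty,1,1)$ is not admissible in Lemma~\ref{continuityYBbasic}, which requires $1/p+1/\tilde p=1+1/r$. With the first entry only in $L^\infty_k$, that lemma lands in $L^\infty_k$, not in $L^1_k$.
\item For the same reason, the lemma does not control $\cB_\chi[\omk^{-1/2}(\alpha-\alpha'),w']$ in $L^1_k(L^\infty_\eta)$ by $\|\alpha-\alpha'\|_{L^\infty_k}$. So your combined Gronwall quantity does not close as you apply it.
\end{itemize}
The repair is short and also removes the ``subtlety'' you mention. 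Use $(p,\tilde p,r)=(1,1,1)$ throughout. For the term linear in $w-w'$, use $\omk^{-1/2}\alpha(t)\in L^1_k$, which holds because $\alpha(t)$ is Schwartz. For the other term, \eqref{integrateddealpha} gives
\[
\|\alpha(t)-\alpha'(t)\|_{L^1_k}\lesssim\int_0^t\|(w-w')(s,\cdot,0)\|_{L^1_k}\,ds\le\int_0^t\|w-w'\|_{L^1_k(L^\infty_\eta)}\,ds ,
\]
since the trace at $\eta=0$ is bounded pointwise in $k$ by the supremum in $\eta$. Gronwall then closes in $L^1_k(L^\infty_\eta)$ alone. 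Alternatively, $\alpha-\alpha'$ is supported in the compact set $\mathrm{supp}\,\chi$, so its $L^\infty_k$ norm controls its $L^1_k$ norm.

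A smaller point in Step~1: finite speed of propagation is not what propagates decay of $A$ when the source is Schwartz but not compactly supported. It is simpler to read $\alpha(t)\in\cS$ off \eqref{integrateddealpha}. The Duhamel term is smooth and supported in $\mathrm{supp}\,\chi$. Also $e^{-i\omk t}\alpha_0\in\cS$, because the $k$-derivatives of $e^{-i\omk t}$ grow only polynomially in $t$. This is the point of the paper's footnote requiring $\chi\in\cC^\infty_b$.
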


\begin{proof} Two preliminary remarks which follow from straightforward arguments (that can be found in \cite{MR778979,MR519635,MR515899}).
 First, the Schwartz space regularity is propagated  by both equations (\ref{PSKG})
and (\ref{SKGrevisited})\footnote{The regularity $ \chi \in \cC^\infty_b (\RR^d; \RR) $ is needed to ensure that  \eqref{SKGrevisited2} preserves the Schwartz space $ \cS (\RR^d ; \CC) $}. Secondly, in the class of smooth rapidly decreasing functions, we have uniqueness for both Cauchy problems
(\ref{PSKG})-(\ref{inidataSKGinif}) and (\ref{SKGrevisited})-(\ref{inidataSKGrevisited}).  On the other hand taking a solution $ (u,A) $ to the Cauchy problem (\ref{PSKG})-(\ref{inidataSKGinif}), in view of the preceding construction, the two expressions
(\ref{defunknownw}) and (\ref{introdealpha}) yield
a solution $ (w,\alpha) $ to  (\ref{SKGrevisited})-(\ref{inidataSKGrevisited}) which, by uniqueness, must coincide with the solution $ (w,\alpha) $
to (\ref{SKGrevisited})-(\ref{inidataSKGrevisited}). It follows that the component $ w $ is
$ \cS $-prepared for all time $ t $, as indicated in \eqref{preparedpropagationattimet}. Moreover, since
$ w = (2 \pi)^{-d}\cF_{x,\xi} \circ {\rm W}(u) $, we also have $ w = \cI_{k,\eta} w $.
\end{proof}

\begin{prop}[Equivalence of systems] \label{Equivalenceofsystems} In the context of smooth rapidly decreasing  functions, it is  equivalent to work with (\ref{PSKG})-(\ref{inidataSKGinif}) or with its  microlocal formulation (\ref{SKGrevisited})-(\ref{inidataSKGrevisited}), where $ w_0 $ is the $ \cS $-prepared data coming from $ u_0 $ through \eqref{inidatatransporteqonw} and where $ \alpha_0 $ is given by \eqref{SKGreinter2ini}.
\end{prop}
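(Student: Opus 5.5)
The plan is to prove the two implications separately. Both rest on two facts already recorded in the proof of Lemma~\ref{preparedpropagation}: Schwartz regularity propagates for (\ref{PSKG}) and for (\ref{SKGrevisited}), and both Cauchy problems are unique in that class. We also use the injectivity of the Wigner map ${\rm W}$ on $\cS(\RR^d;\CC)$, up to the phase ambiguity $u\mapsto e^{i\theta}u$ which is intrinsic to ${\rm W}$.

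\emph{Direct implication.} Let $(u,A)$ be a smooth rapidly decreasing solution of (\ref{PSKG})-(\ref{inidataSKGinif}). Define $w$ by (\ref{defunknownw}) and $\alpha$ by (\ref{introdealpha}). Then the computations of Paragraphs~\ref{nucleonfield}--\ref{Yukawabilinearmapdef} apply verbatim:
\begin{itemize}
\item the Moyal equation (\ref{KGB1}) is Fourier transformed to (\ref{KGB1intermediaire});
\item formula (\ref{computeFKA}) together with (\ref{defedeqy}) turns the interaction term into $(2\pi)^{d/2}\cB_\chi[\omk^{-1/2}\alpha,w]$, which is (\ref{SKGrevisited1});
\item the identities (\ref{eqourcIalpha})--(\ref{eqourcJalpha2}) yield (\ref{SKGrevisited2}).
\end{itemize}
The initial data match (\ref{inidatatransporteqonw}) and (\ref{SKGreinter2ini}) by construction.

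\emph{Converse implication.} Let $(w,\alpha)$ be the unique Schwartz solution of (\ref{SKGrevisited})-(\ref{inidataSKGrevisited}) with $w_0$ $\cS$-prepared from $u_0$. First solve (\ref{PSKG}) with the data $(u_0,A_0,A_1)$, where $(A_0,A_1)$ are recovered from $\alpha_0$ through (\ref{recoveralpha01})--(\ref{recoveralpha02}). These are real-valued, since $\cI$ and $\cJ$ produce Fourier transforms of real functions. By the direct implication and uniqueness, the pair built from this $(u,A)$ coincides with $(w,\alpha)$; this is exactly Lemma~\ref{preparedpropagation}.

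It then remains to check that $(u,A)$ is determined by $(w,\alpha)$. The field $A$ is recovered from $\alpha(t,\cdot)$ by (\ref{recoveralpha01}). The function $u(t,\cdot)$ is recovered from $w(t,\cdot)$ up to a constant phase, since $\cF_{x,\xi}^{-1}$ returns ${\rm W}(u)$, and inverting the Wigner transform returns $\overline{u(x+y/2)}u(x-y/2)$. The phase is fixed by continuity in time together with the initial condition $u(0)=u_0$. Alternatively, one notes that any two solutions of (\ref{PSKG1}) sharing the same $A$ and the same Wigner function differ by a time-independent phase.

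\emph{Main obstacle.} The delicate point is this last step: showing that a solution of the microlocal system determines a genuine solution of (\ref{PSKG}) and not merely a family $u(t)$ defined up to time-dependent phases $e^{i\theta(t)}$. I would handle it by taking $u$ from the forward construction, so that no phase choice is needed, and by relying on uniqueness for (\ref{SKGrevisited}). This is the reason Lemma~\ref{preparedpropagation} is invoked rather than inverting ${\rm W}$ pointwise in time.
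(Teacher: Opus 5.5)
Your proposal is correct, but for the converse direction it follows a different route from the paper.

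\textbf{The paper's route.} It invokes Lemma~\ref{preparedpropagation} only to know that $w(t,\cdot)$ is $\cS$-prepared, and then reconstructs the physical fields by hand. First, it uses the parity relation $\cJ_k w(t,k,0)=0$ to split \eqref{SKGrevisited2} into \eqref{eqourcIalpha} and \eqref{eqourcJalpha2}. Next, it \emph{defines} $A$ from $\alpha$ through \eqref{BBfromaa} and computes $\part^2_{tt}A$ to verify \eqref{PSKG2}. Finally, it recovers $u(t,\cdot)$ from $w(t,\cdot)$ by the Wigner inversion formula and asserts that, up to a phase, this yields a solution of \eqref{PSKG1}.

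\textbf{Your route.} You use the full content of Lemma~\ref{preparedpropagation}, namely forward Schwartz existence for $\text{SKG}_\chi$ plus uniqueness for \eqref{SKGrevisited}. This identifies $(w,\alpha)$ with the transform of an actual solution $(u,A)$. The equations for $(u,A)$ then hold by construction, so the $\cI/\cJ$ computation becomes unnecessary. You also sidestep a point the paper leaves implicit: inverting the Wigner transform pointwise in time only defines $u(t)$ up to a factor $e^{i\theta(t)}$, and one must still show that some choice gives a genuine solution of \eqref{PSKG1}.

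\textbf{What each buys.} The paper's route gives explicit inverse formulas. It also shows directly how the real Klein--Gordon equation is encoded in \eqref{SKGrevisited2} together with the preparedness constraint. Your route is shorter and logically tighter. Both ultimately rest on the same Schwartz-class existence and uniqueness theory.

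\textbf{One correction.} Continuity in time together with $u(0)=u_0$ does \emph{not} fix the phase: any continuous $\theta$ with $\theta(0)=0$ preserves both properties. What fixes it is your alternative argument. If $e^{i\theta(t)}u(t)$ also solves \eqref{PSKG1} with the same $A$, then $\theta'(t)\,u(t)=0$. Since $\|u(t)\|_{L^2}=\|u_0\|_{L^2}$ is conserved, $\theta$ must be constant whenever $u_0\neq 0$.
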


\begin{proof} We have already explained how to pass from (\ref{PSKG})-(\ref{inidataSKGinif}) to (\ref{SKGrevisited})-(\ref{inidataSKGrevisited}).
We have now to go in the opposite direction. First, consider the passage from (\ref{SKGrevisited2}) to (\ref{PSKG2}). By Lemma \ref{preparedpropagation}, we
know already that $ w $ is $ \cS $-prepared, and therefore we can exploit (\ref{paritedew}) in the form
\[ w(t,k,0) = \cI_k w (t,k,0) , \qquad \cJ_k w (t,k,0) = 0 . \]
Then, from (\ref{SKGrevisited2}), using (\ref{propdecIcJ}), we can extract (\ref{eqourcIalpha}) as well as (\ref{eqourcJalpha2}).
Now, in accordance with (\ref{llllllllll1}), define
\begin{equation}\label{BBfromaa}
A (t,x) := \frac{1}{(2 \pi)^{d/2}} \int_{\RR^d} \frac{ \alpha (t,k) e^{ik \cdot x} + \bar \alpha (t,k) e^{-ik \cdot x}}{ 2 \omk^{1/2}} \ dk .
\end{equation}
It follows from (\ref{eqourcIalpha}) and (\ref{BBfromaa}) that
\[ \part_t A = - i (2 \pi)^{d/2} \cF^{-1}_k (\omk^{1/2} \cJ_k\alpha) , \]
and then, with (\ref{eqourcJalpha2}), we have
\[ \part^2_{tt} A = - (2 \pi)^{d/2} \cF^{-1}_k (\omk^{3/2} \cI_k\alpha) - \cF^{-1}_k \bigl( \chi (k) w(t,k,0) \bigr) , \]
which furnishes  (\ref{PSKG2}). In the  last equality, $|u|^2$ is recovered from $w$ according to \eqref{recallwigff}.

\smallskip

\noindent Secondly, consider the passage from (\ref{SKGrevisited1}) to (\ref{PSKG1}). Again, knowing that $ w(t,\cdot) $ is $ \cS $-prepared, that is $w= (2 \pi)^{-d}\cF_{x,\xi} \circ {\rm W} \bigl( u (t,\cdot) \bigr) $,
we can recover the kernel $u(t,x) \bar u(t,y)$ from $ w(t,\cdot) $ through the inversion formula (\ref{recallwig}). Hence, up to a phase factor reflecting the $U(1)$-symmetry of the Schr\"odinger equation, we find that there exists a   solution $\tilde u(t,\cdot)$
of the equation (\ref{PSKG1}) which is such that $w= (2 \pi)^{-d}\cF_{x,\xi} \circ {\rm W} \bigl( \tilde u (t,\cdot) \bigr) $.
\end{proof}


\subsubsection{The Fourier--Moyal equation and its integral representation}\label{SKGSequationenfin} Now, we can exploit (\ref{integrateddealpha}) to go further
in the reduction. To this end, plug (\ref{integrateddealpha}) into \eqref{SKGrevisited1}.
Knowing that $ \alpha_0 = \chi \, \tilde \alpha_0 $, with $ f_{\tilde \alpha_0} $ and $ g_w $ as in  \eqref{int.defdecLcQt} and  \eqref{ilfautajouter}, this furnishes
$$ \part_t w - (k \cdot \nabla_\eta) w +  \cB_\chi \Bigl \lbrack \omk^{-1} \, \chi(k) \Bigl( f_{\tilde \alpha_0}(t,k,\eta) - \int_0^t g_w (s,k,0) \, ds \Bigr), w \Bigr \rbrack = 0 \, . $$
By this way, recalling that $ \tilde \chi = \omk^{-1} \, \chi(k)^2 $, we recover the equation  (\ref{transporteqonw}) with $ \cL_t $ and $ \cQ_t $ defined as in \eqref{int.defdecLcQt} and  \eqref{ilfautajouter}.

\smallskip

\noindent Fix some $ \cT \in \RR_+^* $. Duhamel's principle implies that $ w $ can be interpreted
as a solution to the Fourier--Moyal equation (\ref{transporteqonw})-(\ref{inidatatransporteqonw}) on $ [0,\cT] $ when
\begin{equation}\label{integralversionSKGS}
w(t,k,\eta) = \Phi (w) (t,k,\eta) , \qquad \forall t \in [0,\cT] ,
\end{equation}
where
\begin{equation}\label{integralversionSKGSPhi}
\qquad \Phi (w) (t,k,\eta) := w_0(k,\eta+ t k)  - \int_0^t \bigl \lbrack \cL_s w + \cQ_s (w) \bigr \rbrack \bigl(  k , \eta + (t-s) k \bigr) \, ds .
\end{equation}
The relation (\ref{integralversionSKGS}) completed by (\ref{integralversionSKGSPhi}) is  the integral version of the
Fourier--Moyal equation.


\subsection{Conserved quantities}\label{subsec:Conservedquantities}
In this subsection, we discuss about conserved quantities for the Fourier--Moyal equation.
In Paragraph \ref{subsec:Yukawaquadraticmap}, we start by exhibiting algebraic identities (useful throughout the text) satisfied by the $ \cY \cB $-map. In Paragraph \ref{consquan}, we highlight quantities (expressed in terms of $ w $ and $ \alpha $) that are invariant under the flow generated by \eqref{SKGrevisited}.
In Paragraph \ref{subsubsec:interpreattion},
we further compare these expressions with $ \cE_1 (t) $.


\subsubsection{Remarkable identities for the $ \cY \cB $-map}\label{subsec:Yukawaquadraticmap} 
We first underline some specific features of the Yukawa bilinear map 
$ \cB_{\tilde \chi} $. Recall that $ \tilde \chi $ is given by \eqref{cutofftichi}.

\begin{lem}[Remarkable identities] \label{biliasuiesti}  For  all $ w_1,w_2 \in \cS (\RR^d \times \RR^d; \CC) $, we have
\begin{subequations}\label{l2conserv}
\begin{eqnarray}
& & \displaystyle \cB_{\tilde \chi} \lbrack w_1 , w_2 \rbrack (\cdot,0) \equiv 0 , \label{l2conserv2} \\
& &  \displaystyle \int_{\RR^d} \! \int_{\RR^d} \bar w_1(k,\eta) \, \cB_{\tilde \chi} \lbrack w_2,w_1 \rbrack (k,\eta)  \, dk d\eta = 0,  \qquad
\text{ if $ w_1= \cI_{k,\eta} w_1 $ },\label{l2conserv1} \\
& & \displaystyle \Delta_\eta \bigl( \cB_{\tilde \chi} \lbrack w_2,w_1 \rbrack \bigr) (\cdot,0)  = \frac{2}{(2 \pi)^d} \int_{\RR^d} {\tilde \chi} (\tilde k) \, \cI_{k,\eta}
w_2 (\tilde k,0) \, (\tilde k \cdot \nabla_\eta)  (\cI_{k,\eta} w_1) (\cdot-\tilde k,0) \,   d \tilde k . \label{l2conserv3}
\end{eqnarray}
\end{subequations}
\end{lem}

\begin{proof} Recall the definition (\ref{bilitodefine}) of $ \cB_{\tilde \chi} $.

\smallskip

\noindent $ \circ $ The factor $ \sin (\tilde k \cdot \eta/2) $ is just zero for $ \eta = 0 $. This furnishes (\ref{l2conserv2}).

\smallskip

\noindent $ \circ $ In the first line below, we make explicit the double integral (\ref{l2conserv1}). Then, we replace the variable $ (k,\eta) $
by $ (-k,-\eta) $, and we use \eqref{paritedew} to get the second line
\renewcommand\arraystretch{2}
\[ \begin{array}{l}
\displaystyle \frac{2}{(2 \pi)^d} \int_{\RR^d} \! \int_{\RR^d} \! \int_{\RR^d} \overline {\cI_{k,\eta} w_1}(k,\eta) \, {\tilde \chi} (\tilde k) \, \sin (\tilde k \cdot \eta /2 ) \,
\cI_{k,\eta} w_2 (\tilde k,0) \, \cI_{k,\eta} w_1 (k-\tilde k, \eta) \, dk d \tilde k d\eta \\
\displaystyle \quad = - \frac{2}{(2 \pi)^d} \int_{\RR^d} \! \int_{\RR^d} \! \int_{\RR^d} \cI_{k,\eta} w_1(k,\eta) \, {\tilde \chi} (\tilde k) \, \sin (\tilde k \cdot \eta /2 ) \,
\cI_{k,\eta} w_2(\tilde k,0) \, \overline{\cI_{k,\eta} w_1} (k+\tilde k, \eta) \, dk d \tilde k d\eta .
\end{array} \]
\renewcommand\arraystretch{1}

\noindent Change $ k $ into $ k-\tilde k $ in the above second line to recognize the first expression preceded by the minus sign $ - $.
This yields (\ref{l2conserv1}).

\smallskip

\noindent $ \circ $ Then, compute
\[ \Delta_\eta \bigl \lbrack \sin (\tilde k \cdot \eta /2 ) \, \cI_{k,\eta} w (k-\tilde k, \eta) \bigr \rbrack (k,0) = (\tilde k \cdot \nabla_\eta) (\cI_{k,\eta} w)
(k-\tilde k, 0) . \]
The last line (\ref{l2conserv3})  is a straightforward consequence of this relation.
\end{proof}


\subsubsection{Invariants}\label{consquan}
The relations in (\ref{l2conserv}) give rise to  the following statement.

\begin{cor}[Conserved quantities] \label{conservedquantities}
Let $ (w,\alpha) (\cdot) $ be a smooth rapidly decreasing  solution to the
Cauchy problem (\ref{SKGrevisited})-(\ref{inidataSKGrevisited}), issued from a $ \cS $-prepared data $ w_0 $, that is $ w_0 = (2 \pi)^{-d} \cF_{x,\xi} \circ {\rm W} (u_0) $ for some $ u_0 \in
\cS (\RR^d ; \CC) $. Then, the following
quantities are constant with time.

\medskip

\noindent $ \bullet $ The $ L^2 $-norm of $ w (t,\cdot) $:
\begin{equation} \label{l2normconserved}
\forall t \in \RR , \qquad \int_{\RR^d} \! \int_{\RR^d} \vert w(t,k,\eta) \vert^2 \, dk d\eta = \int_{\RR^d} \! \int_{\RR^d} \vert
w_0(k,\eta) \vert^2 \, dk d\eta =(2\pi)^d \, \|u_0\|^4_{L^2}.
\end{equation}
$ \bullet $ The value of $ w (t,\cdot) $ at the origin:
\begin{equation} \label{l2bisnormconserved}
\forall t \in \RR , \qquad w(t,0,0) = w_0 (0,0)= \|u_0\|^2_{L^2} .
\end{equation}
$ \bullet $ The $ \text{SKG}_\chi $ energy expressed in terms of $ w $ and $ \alpha $.
For every $ t \in \RR $, define
\renewcommand\arraystretch{2}
\begin{equation} \label{l2bissnormconserved}
\begin{array}{rl}
\tilde \cE_\chi (t) \! \! \! & \displaystyle := - \frac{\Delta_\eta}{2} w(t,0,0)
+ \frac{1}{(2\pi)^{d/2}}
\int_{\RR^d} w(t,-k,0)\,\omk^{-1/2}\,\chi(k)\,\cI\alpha(t,k)\,dk \\
& \displaystyle \quad\,
+ \int_{\RR^d} \omk\,|\alpha(t,k)|^2\,dk
= \tilde \cE_\chi (0).
\end{array}
\end{equation}
\renewcommand\arraystretch{1}

\noindent Moreover, the quantity $ \tilde \cE_\chi(t) $ is precisely the total energy
$ \cE_\chi(t) $ of the Schr\"odinger--Klein--Gordon system \eqref{PSKG},
rewritten in terms of the Fourier--Moyal variables $ (w,\alpha) $.
\end{cor}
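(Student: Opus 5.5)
Each of the three conservation laws in Corollary~\ref{conservedquantities} follows from differentiating in time and using the system \eqref{SKGrevisited} together with the identities of Lemma~\ref{biliasuiesti}. Throughout, Lemma~\ref{preparedpropagation} tells us that $w(t,\cdot)$ stays $\cS$-prepared and satisfies $w=\cI_{k,\eta}w$, so all manipulations are justified in the Schwartz class.

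\emph{$L^2$-norm.} We compute $\frac{d}{dt}\|w\|_{L^2}^2 = 2\,\mathrm{Re}\int \bar w\,\part_t w$. The transport term contributes $\int \bar w\,(k\cdot\nabla_\eta)w$. Its real part is $\frac12\int k\cdot\nabla_\eta |w|^2=0$ after integrating by parts in $\eta$. The interaction term is a multiple of $\int\bar w\,\cB_\chi[\omk^{-1/2}\alpha,w]$, which vanishes by \eqref{l2conserv1}. Here we use that the proof of \eqref{l2conserv1} only needs $w_1=\cI w_1$, not the particular cutoff, so it applies with $\chi$ in place of $\tilde\chi$. The value $(2\pi)^d\|u_0\|^4_{L^2}$ comes from Plancherel in $(x,\xi)$ and the Moyal identity $\|{\rm W}(u)\|_{L^2}^2=(2\pi)^{-d}\|u\|^4$, with constants adjusted to \eqref{defunknownw}.

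\emph{Value at the origin.} Evaluating \eqref{SKGrevisited1} at $(k,\eta)=(0,0)$, the transport term vanishes because of the factor $k$. The bilinear term vanishes at $\eta=0$ by \eqref{l2conserv2}. Hence $\part_t w(t,0,0)=0$, and $w_0(0,0)=\|u_0\|^2_{L^2}$ by \eqref{recallwigff}.

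\emph{Energy.} This is the main part of the argument. I see two routes.
\begin{itemize}
\item[(a)] The cheaper route: identify each term of $\tilde\cE_\chi$ with the corresponding term of $\cE_\chi$ in \eqref{eq.skg.energytr}, and then invoke Proposition~\ref{Equivalenceofsystems} and the classical conservation of $\cE_\chi$ for smooth solutions of $\mathrm{SKG}_\chi$.
\begin{itemize}
\item[$\circ$] Kinetic term: $-\frac12\Delta_\eta w(0,0)=\frac12\|\nabla u\|^2$, since $-\Delta_\eta$ at $\eta=0$ of $\cF_\xi$ brings down $|\xi|^2$, and integrating $|\xi|^2$ against the Wigner function in $x$ gives $\|\nabla u\|^2$.
\item[$\circ$] Field term: $\int\omk|\alpha|^2$ equals the field part of the energy. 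This is $\|\omk^{1/2}\alpha\|^2_{L^2}$ computed in \eqref{pasaoublierbis}, with the normalisation factor $\frac12$ to be checked.
\item[$\circ$] Coupling term: $\int \chi(D)A\,|u|^2=(2\pi)^{-d}\int \chi(k)\hat A(k)\,\overline{\widehat{|u|^2}(k)}\,dk$. Substituting $\widehat{|u|^2}(-k)=w(t,-k,0)$ and $\hat A=(2\pi)^{d/2}\omk^{-1/2}\cI\alpha$ from \eqref{llllllllll1} yields the middle term.
\end{itemize}
\item[(b)] A direct check: differentiate $\tilde\cE_\chi$ using \eqref{SKGrevisited} and \eqref{l2conserv3}.
\begin{itemize}
\item[$\circ$] $\Delta_\eta$ of the transport term at $\eta=0$ gives $k$-weighted first derivatives, which vanish at $k=0$.
\item[$\circ$] $\Delta_\eta$ of the bilinear term at $\eta=0$ produces, via \eqref{l2conserv3}, an expression linear in $\cI\alpha$ and in $\nabla_\eta w(\cdot,0)$.
\item[$\circ$] This expression must cancel against the term involving $\part_t w(t,-k,0)=(-k\cdot\nabla_\eta w)(t,-k,0)$, which comes from differentiating the coupling term.
\item[$\circ$] The time derivatives of $\alpha$ in the coupling term and in $\int\omk|\alpha|^2$ cancel by \eqref{SKGrevisited2}.
\end{itemize}
\end{itemize}

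I would carry out route (a) and use (b) only as a consistency check. The main obstacle is bookkeeping of constants and signs: the factors $(2\pi)^{\pm d/2}$, the $\frac12$ in front of the field energy, and the replacement of $\mathrm{Re}\,\alpha$ by $\cI\alpha$ using evenness of $\chi$.
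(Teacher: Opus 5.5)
\textbf{The energy part has a genuine gap, and it sits exactly at the factor you left ``to be checked''.}

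By \eqref{pasaoublierbis},
\[
\int\omk|\alpha|^2\,dk=\|A\|_{L^2}^2+\|\nabla_xA\|_{L^2}^2+\|\partial_tA\|_{L^2}^2 ,
\]
which is \emph{twice} the field part of $\cE_\chi$ in \eqref{eq.skg.energytr}. Your identifications of the kinetic and coupling terms are correct. So route (a) actually gives
\[
\tilde\cE_\chi=\cE_\chi+\tfrac12\int\omk|\alpha|^2\,dk .
\]
Since the free Klein--Gordon energy is not conserved under the coupling, route (a) cannot prove the claim; it refutes it. Concretely, \eqref{SKGrevisited2} gives
\[
\partial_t\int\omk|\alpha|^2\,dk=\frac{i}{(2\pi)^{d/2}}\int\omk^{1/2}\chi(k)\bigl(\alpha(t,k)\bar w(t,k,0)-\bar\alpha(t,k)w(t,k,0)\bigr)\,dk .
\]
Take $\alpha_0=0$ and $\chi\,w_0(\cdot,0)\not\equiv0$; for instance $\chi(0)\neq0$ works, since $w_0(0,0)=\|u_0\|_{L^2}^2$. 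Then this derivative vanishes at $t=0$, while its own time derivative there equals $2(2\pi)^{-d}\int\chi^2|w_0(k,0)|^2\,dk>0$. Hence $\tilde\cE_\chi$ as written in \eqref{l2bissnormconserved} is neither constant nor equal to $\cE_\chi$.

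\textbf{Route (b) confirms this.} It also locates the same slip in the paper's own proof, which follows route (b). After using $(k\cdot\nabla_\eta)w(t,-k,0)=-\partial_tw(t,-k,0)$ and $\partial_t\cI_k\alpha=-i\omk\cJ_k\alpha$, the leftover term is
\[
\frac{i}{(2\pi)^{d/2}}\int\chi(k)\,w(t,-k,0)\,\omk^{1/2}\cJ_k\alpha(t,k)\,dk
=\frac{i}{2(2\pi)^{d/2}}\int\chi(k)\,\omk^{1/2}\bigl(\alpha(t,k)\bar w(t,k,0)-\bar\alpha(t,k)w(t,k,0)\bigr)\,dk .
\]
This is $\frac12\,\partial_t\int\omk|\alpha|^2\,dk$, not $\partial_t\int\omk|\alpha|^2\,dk$ as the paper asserts.

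\textbf{What is provable.} Replace the last term of \eqref{l2bissnormconserved} by $\frac12\int\omk|\alpha(t,k)|^2\,dk$. With that coefficient both of your routes close and the identification with $\cE_\chi$ holds.

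\textbf{Comparison of routes.} The paper makes (b) the main argument, because its point is conservation intrinsically in the Fourier--Moyal variables; it uses the term-by-term identification only for the ``Moreover'' part. Your route (a) is shorter but imports the classical conservation of $\cE_\chi$ through Proposition~\ref{Equivalenceofsystems}.

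\textbf{Minor constant slip in the $L^2$ part.} By \eqref{crosswignermoyalidentityb}, $\|{\rm W}(u)\|_{L^2}^2=(2\pi)^{d}\|u\|_{L^2}^4$, not $(2\pi)^{-d}\|u\|_{L^2}^4$. Plancherel in $(x,\xi)$ exactly cancels the prefactor $(2\pi)^{-d}$, so $\|w\|_{L^2}=\|v\|_{L^2}$ and the value $(2\pi)^d\|u_0\|_{L^2}^4$ follows. The rest of your mass and origin arguments matches the paper.
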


\noindent The information provided by \eqref{l2bissnormconserved} should be compared with identity (2.17) in \cite{MR515899} and with the last assertion of Theorem 3 in \cite{MR778979}.
\begin{proof} By Lemma \ref{preparedpropagation}, we know already that $ w = \cI_{k,\eta} w $. This information will be repeatedly
used in what follows.

\smallskip

\noindent $ \circ $ From (\ref{SKGrevisited1}), it is easy to infer that
\[ \part_t \vert w\vert^2 - (k \cdot \nabla_\eta) \vert w\vert^2 + (2 \pi)^{d/2}\bar w \cB_\indic \lbrack \omk^{-1/2} \alpha , w \rbrack + (2 \pi)^{d/2}  w \overline{\cB_\indic \lbrack \omk^{-1/2} \alpha ,
w \rbrack} = 0 , \]
and therefore that
\[ \part_t \int_{\RR^d} \! \int_{\RR^d} \vert w (t,k,\eta) \vert^2 \, dk d\eta + 2 (2 \pi)^{d/2} Re \ \Bigl( \int_{\RR^d} \! \int_{\RR^d} \bar w (t,k,\eta) \,
\cB_\indic \lbrack \omk^{1/2} \alpha , w \rbrack (t,k,\eta) \, dk d \eta \Bigr)= 0 . \]
Since $ w = \cI_{k,\eta} w $, we can apply (\ref{l2conserv1}) with $ w_2 = \omk^{1/2} \alpha $ to see that the right hand side disappears. This leads
directly to the left part of (\ref{l2normconserved}). The link to the $ L^2 $-norm of $ u $ can be achieved through Plancherel theorem and
Moyal's identity (\ref{Moyals}), which give rise to
\begin{equation} \label{rein}
\parallel w \parallel_{L^2} = \parallel v \parallel_{L^2} = (2 \pi)^{d/2} \parallel u \parallel^2_{L^2} .
\end{equation}

\noindent $ \circ $ In view of (\ref{l2conserv2}), for $ k = 0 $ and $ \eta = 0 $, the equation (\ref{SKGrevisited1}) reduces to $ \part_t w (t,0,0) = 0 $,
which explains (\ref{l2bisnormconserved}). On the other hand, exploiting (\ref{recallwigff}) and (\ref{introdew}), we find that
\begin{equation} \label{integradew}
\parallel u (t,\cdot) \parallel^2_{L^2} =  (2 \pi)^{-d} \int_{\RR^d} \! \int_{\RR^d} v(t,x,\xi) dx d\xi = w(t,0,0) \, ,
\end{equation}
which illustrates the second link with the mass of the nucleon particle.

\smallskip

\noindent $ \circ $ We now turn to (\ref{l2bissnormconserved}). Apply $ \Delta_\eta $ to (\ref{SKGrevisited1}), and take $ (k , \eta)
= (0,0) $ in the resulting expression. Then, exploit (\ref{l2conserv3}) to find
\[ \part_t  \Bigl(\frac{\Delta_\eta}{2} w \Bigr) (t,0,0) + \frac{1}{(2\pi)^{d/2}} \int_{\RR^d} \omk^{-1/2} \, \chi(k) \,  \cI_k \alpha (t, k) \, ( k \cdot \nabla_\eta)  (\cI_{k,\eta} w)
(t,- k,0) \,   d  k = 0 . \]
On the other hand, taking into account (\ref{l2conserv2}), the equation (\ref{SKGrevisited1}) for $ (k,\eta) = (-  k, 0) $ yields
\[ ( k \cdot \nabla_\eta)  w (t,- k,0) = - \part_t w (t,-k,0) . \]
After substitution, recalling that $ w = \cI_{k,\eta} w $, this yields
\[ \part_t  \Bigl(\frac{\Delta_\eta}{2} w \Bigr) (t,0,0) - \frac{1}{(2\pi)^{d/2}} \int_{\RR^d} \omk^{-1/2} \, \chi(k) \,  \cI_k \alpha (t, k) \, \part_t w (t,- k,0) \,   d  k = 0 . \]
Because $ \cI_k \alpha \, \part_t w
= \part_t (\cI_k \alpha \, w) -  \part_t \cI_k \alpha \, w $ whereas $ \part_t \cI_k \alpha = - i \omk \cJ_k \alpha $, there remains
\renewcommand\arraystretch{2}
\[ \begin{array}{rl}
\displaystyle \part_t \Bigl \lbrace - \Bigl(\frac{\Delta_\eta}{2} w \Bigr) (t,0,0) \! \! \! & \displaystyle + \, \frac{1}{(2\pi)^{d/2}} \int_{\RR^d} \chi(k) \, w (t,- k,0) \, \omk^{-1/2} \,
\cI_k \alpha (t, k) \,   d  k \Bigr \rbrace \\
\ & \displaystyle + \, \frac{i}{(2\pi)^{d/2}} \int_{\RR^d} \chi(k) \, w (t,- k,0) \, \omk^{1/2} \, \cJ_k \alpha (t, k) \,   d  k = 0 .
\end{array} \]
\renewcommand\arraystretch{1}

\noindent Since $ \part_t \vert \alpha \vert^2 = i (2\pi)^{-d/2} \omk^{-1/2} \chi (\alpha \bar w - \bar \alpha w) $, the last line is the same as
\[ \begin{array}{l}
\displaystyle \frac{i}{2 (2\pi)^{d/2}}  \int_{\RR^d} \chi(k) \, w (t,- k,0) \omk^{1/2} \displaystyle \bigl \lbrack \alpha (t, k) - \bar \alpha (t, -k) \bigr \rbrack \,  d  k  \\
\displaystyle \quad = \frac{i}{2 (2\pi)^{d/2}}
\Bigl(
\int_{\RR^d} \chi(k) \, \omk^{1/2} \alpha (t, k) \bar w (t, k,0) \, dk -  \int_{\RR^d} \chi(k) \, \omk^{1/2} \bar \alpha (t, k)  w (t,k,0) \, dk \Bigr) \\
\displaystyle \quad = \part_t \int_{\RR^d} \omk \vert \alpha (t,k) \vert^2 \, dk .
\end{array} \]
To put it briefly, we have checked that $ \part_t \tilde \cE_\chi (t) = 0 $, that is $ \tilde \cE_\chi (t) = \tilde \cE_\chi (0) $ as claimed.

\smallskip

\noindent To complete the proof, assuming that $ (w,\alpha) $ is a (smooth rapidly decreasing) solution to \eqref
{SKGrevisited} issued from a solution $ (u,A) $ to \eqref{PSKG}, we have to compare the computation of $
\tilde \cE_\chi (t) $
in \eqref{l2bissnormconserved} with the one of $ \cE_\chi (t) $ in \eqref
{eq.skg.energytr}. First,
it should be noted that the second relation of (\ref{recallwigff}) indicates that
\renewcommand\arraystretch{2}
\begin{equation} \label{integraxidewbis}
\begin{array}{rl}
\displaystyle - \frac{\Delta_\eta}{2} w (t,0,0) \! \! \! & \displaystyle = \frac{1}{2 (2\pi)^d} \int_{\RR^d} \! \int_{\RR^d}
\vert \xi \vert^2 \, v(t,x,\xi) dx d\xi \\
\ & \displaystyle = \frac{1}{2 (2\pi)^d} \int_{\RR^d} \vert \xi \vert^2 \, \vert \hat u(t,\xi) \vert^2 \, d\xi = \frac{1}{2} \parallel \nabla_x u (t,\cdot)
\parallel^2_{L^2} .
\end{array}
\end{equation}
Secondly, recall \eqref{pasaoublierbis} which establishes the link between the $ L^2_{1/2} $-norm of $ \alpha $ and the free energy of the Klein--Gordon equation.

\smallskip

\noindent Thirdly, we focus on
 the interaction term.
Observe
from \eqref{recallwigff}-\eqref{recallwigffl1}, that we have
\begin{equation}
\begin{array}{rl}
\displaystyle w (t,-k,0) \! \! \! & \displaystyle
=\frac{1}{(2\pi)^d}
\int_{\RR^d}\int_{\RR^d} e^{i k\cdot x} v(t,x,\xi) dx d\xi \\
\ & \displaystyle =\int_{\RR^d} e^{i k\cdot x} |u(t,x)|^2 dx = \overline{\cF_x \bigl(\vert u(t,\cdot) \vert^2 \bigr)}(k) .
\end{array}
\end{equation}
It follows with \eqref{llllllllll} that
$$ \begin{array}{rl}
\displaystyle \frac{1}{(2\pi)^{d/2}} \int_{\RR^d} w (t,-k,0) \, \frac{\chi(k)}{\omk^{1/2}} \, \cI \alpha (t,k) \, dk \! \! \! & \displaystyle =
\frac{1}{(2\pi)^{d}} \int_{\RR^d} \overline{\cF_x \bigl(\vert u(t,\cdot) \vert^2 \bigr)} (k) \, \chi(k) \, \cF_x \bigl( A(t,\cdot) \bigr) (k) \, dk \\
& \displaystyle = \int_{\RR^d} \chi(D) A (t,x) \ |u(t,x)|^2 \, dx .
\end{array}
$$
By picking up the pieces, we recover that $ \tilde \cE_\chi (t) = \cE_\chi (t) $. 
For $ \chi \equiv \indic_\RR $, it is well known that the energy $ \cE_\indic $ is conserved along the flow of SKG. The same property holds for $ \cE_\chi $ in the case of $ \text{SKG}\chi $. This can be established directly from \eqref{PSKG}. Our contribution is rather to rewrite $ \cE\chi $ in terms of the variables $ (w,\alpha) $, and to prove that the resulting energy $ \tilde{\cE}_\chi $ is conserved along the flow of \eqref{PSKG} expressed in the Fourier--Moyal variables.
\end{proof}


\subsubsection{Interpretation in terms of $ u $ and $ A $}\label{subsubsec:interpreattion} In this paragraph, for the sake of simplicity, we work with $ d=3 $ and we take $ \chi \equiv 1 $.
It is interesting to discuss  the content of Corollary \ref{conservedquantities} in terms of $u$ and $A$.
On the one hand, the two identities  \eqref{l2normconserved} and  (\ref{l2bisnormconserved})  express both the conservation of the $ L^2 $-norm of the nucleon wave function $ u $, as it can be easily
deduced from the Schr\"odinger equation (\ref{PSKG1}). On the other hand, the expression  (\ref{l2bissnormconserved})
corresponds to the total energy of the SKG system \eqref{PSKG}. In \eqref{eq.skg.energytr}, the interaction term (in the last line) inherits  apparently no sign condition. Still, it can be absobed since (as can be seen below) the expression $ \cE_1 $ is comparable to the  energy of the non-interacting particle-field system.

\begin{cor}[Comparison of the total and free energies] \label{interpretationofE}  Assume that $ d = 3 $.
Fix a smooth rapidly decreasing solution $ (w,\alpha) (\cdot) $ of the Cauchy problem (\ref{SKGrevisited})-(\ref{inidataSKGrevisited}),
which is  generated by a smooth rapidly decreasing solution $ (u,A) $ to \text{\rm SKG}-(\ref{inidataSKGinif}) . Then, we can find a constant
$ C \in \RR_+^* $ depending only on the $ L^2 $-norm of $ w_0 $ (or $ u_0 $) such that, uniformly in $ t \in \RR_+ $, we have
\begin{equation} \label{equivalencedeEeten}
\parallel \nabla_x u(t,\cdot) \parallel^2_{L^2 (\RR^3)} + \parallel \part_t A(t,\cdot) \parallel^2_{L^2 (\RR^3)} + \parallel A(t,\cdot)
\parallel^2_{H^1 (\RR^3)} \, \sim \cE_1 (t) + C .
\end{equation}
\end{cor}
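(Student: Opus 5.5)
The plan is to use the identification $\tilde\cE_1(t)=\cE_1(t)$ from Corollary~\ref{conservedquantities}, together with the mass conservation \eqref{l2bisnormconserved}, to reduce \eqref{equivalencedeEeten} to a bound on the interaction term. Write
\[
F(t):=\frac12\Bigl(\parallel \nabla_x u(t,\cdot)\parallel^2_{L^2}+\parallel A(t,\cdot)\parallel^2_{H^1}+\parallel \part_t A(t,\cdot)\parallel^2_{L^2}\Bigr),
\qquad
I(t):=\int_{\RR^3} A(t,x)\,\vert u(t,x)\vert^2\,dx ,
\]
so that $\cE_1(t)=F(t)+I(t)$ by \eqref{eq.skg.energytr}; by \eqref{pasaoublierbis}, the $A$-part of $F$ is $\frac12\parallel\omk^{1/2}\alpha\parallel_{L^2}^2$. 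The claim $F\sim \cE_1+C$ then amounts to two inequalities. First, $F\le c_1(\cE_1+C)$. Second, $\cE_1+C\le c_2 F+C'$. The second should be read in the sense that the additive constant is allowed on both sides, as is standard for such equivalences. It is immediate once we know $\vert I\vert\le \frac12 F+C$. So everything reduces to the single estimate
\[
\vert I(t)\vert \le \epsilon F(t) + C_\epsilon\bigl(\parallel u_0\parallel_{L^2}\bigr),
\]
with some fixed $\epsilon<1$, say $\epsilon=1/2$. From it, $F\le 2(\cE_1+C)$ and $\cE_1+C\le \frac32F+2C$ both follow.

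To bound $I$, I will use Hölder and Sobolev in dimension $3$:
\[
\vert I\vert\le \parallel A\parallel_{L^3}\parallel u\parallel_{L^3}^2 .
\]
For $A$, the embedding $H^1\subset L^3$ gives $\parallel A\parallel_{L^3}\lesssim \parallel A\parallel_{H^1}\lesssim F^{1/2}$. For $u$, Gagliardo--Nirenberg gives
\[
\parallel u\parallel_{L^3}^2\lesssim \parallel u\parallel_{L^2}\,\parallel \nabla u\parallel_{L^2}\lesssim M^{1/2}F^{1/2},
\]
where $M=\parallel u(t)\parallel_{L^2}^2=w(t,0,0)=w_0(0,0)$ is conserved by \eqref{l2bisnormconserved}, or equivalently controlled by $\parallel w_0\parallel_{L^2}$ through \eqref{l2normconserved}. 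Hence
\[
\vert I\vert\lesssim M^{1/2} F .
\]
This is linear in $F$, which is not good enough when $M$ is large. That is the main obstacle: the naive exponent counting gives exactly the borderline power $F^1$.

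To gain, I will distribute the exponents differently and use only part of the gradient. By Hölder,
\[
\vert I\vert\le \parallel A\parallel_{L^6}\parallel u\parallel_{L^{12/5}}^2 .
\]
Then $\parallel A\parallel_{L^6}\lesssim\parallel\nabla A\parallel_{L^2}\lesssim F^{1/2}$, and Gagliardo--Nirenberg with $\theta=1/4$ gives
\[
\parallel u\parallel_{L^{12/5}}^2\lesssim M^{3/4}\parallel\nabla u\parallel_{L^2}^{1/2}\lesssim M^{3/4}F^{1/4}.
\]
Therefore $\vert I\vert\lesssim M^{3/4}F^{3/4}$. Young's inequality then yields
\[
\vert I\vert\le \tfrac12 F + C M^{3},
\]
which is the required bound with a constant depending only on $\parallel u_0\parallel_{L^2}$, equivalently on $\parallel w_0\parallel_{L^2}$ via \eqref{l2normconserved}. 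Uniformity in $t$ comes from mass conservation. The remaining task is to check that the Gagliardo--Nirenberg exponents in $d=3$ are as stated, namely $\frac5{12}=\frac{1-\theta}{2}+\theta\bigl(\frac12-\frac13\bigr)$, which indeed gives $\theta=\frac14$.
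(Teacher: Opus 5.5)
Your proof is correct, and it follows a different route from the paper's.

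\textbf{The paper's route.} The paper stays on the Fourier--Moyal side. It writes the interaction term as $\int w(t,-k,0)\,\langle k\rangle^{-1}\,\cI\alpha(t,k)\,dk$ and applies H\"older in $L^4\times L^{4/3}$. It controls $\|w(t,\cdot,0)\|_{L^4}$ by interpolating the mass bound $\|w(t,\cdot,0)\|_{L^\infty}\le\|u\|_{L^2}^2$ with the Plancherel identity $\|w(t,\cdot,0)\|_{L^2}\simeq\|u\|_{L^4}^2$. It then uses Gagliardo--Nirenberg, $\|u\|_{L^4}\lesssim\|\nabla u\|_{L^2}^{3/4}\|u\|_{L^2}^{1/4}$, together with the integrability of $\langle k\rangle^{-4}$ on $\RR^3$. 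This gives $|I|\lesssim\|\nabla u\|_{L^2}^{3/4}\|\alpha\|_{L^2}$, which has homogeneity $7/8$ in the energy, and a Young-type inequality concludes.

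\textbf{Your route.} You work directly in physical space, using H\"older in $L^6\times L^{12/5}\times L^{12/5}$, the Sobolev embedding $\dot H^1\subset L^6$ for $A$, and Gagliardo--Nirenberg with $\theta=1/4$ for $u$. This gives homogeneity $3/4$, and your exponent checks are right. Your reading of $\sim$, with an additive constant allowed on both sides, is also right: it is exactly what the paper's final inequality delivers.

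\textbf{What each buys.} The paper's version expresses the field contribution purely through $\alpha$. It also shows that the trace $w(t,\cdot,0)$, combined with the conserved quantities, suffices, in keeping with the phase-space viewpoint. Your version is the shorter classical energy argument and gives an explicit constant $C\sim\|u_0\|_{L^2}^6$. It also sidesteps the weight bookkeeping in the interaction term. The energy \eqref{l2bissnormconserved} carries $\langle k\rangle^{-1/2}$, not $\langle k\rangle^{-1}$. So on the Fourier side one really has to split $\langle k\rangle^{-1/2}\cI\alpha=\langle k\rangle^{-1}\cdot\langle k\rangle^{1/2}\cI\alpha$ and bound by the field energy $\|\langle k\rangle^{1/2}\alpha\|_{L^2}$.
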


\begin{proof} We have already seen that the first and third term inside the definition of $ \tilde \cE_1 \equiv \cE_1 $ are exactly the same to the left hand side of (\ref{equivalencedeEeten}). It is enough then to explain how to estimate the second remaining term in \eqref{l2bissnormconserved}. From (\ref{l2normconserved}) or
(\ref{l2bisnormconserved}), we know that
\[ \forall t \in \RR_+ , \qquad \parallel u(t,\cdot) \parallel_{L^2 (\RR^3)} = \parallel u_0 (\cdot) \parallel_{L^2 (\RR^3)} = C(w_0) < + \infty . \]
Taking into account (\ref{recallwigff}), this furnishes
\renewcommand\arraystretch{2}
\begin{equation} \label{estilinftysurmew}
\begin{array}{rl}
\displaystyle \vert w (t,-k,0) \vert \! \! \! & \displaystyle = (2 \pi)^{-3} \vert \cF_x \Bigl( \int_{\RR^3} v(t,\cdot,\xi) d\xi
\Bigr) (-k) \vert = \vert \cF_x \bigl( \vert u(t,\cdot) \vert^2 \bigr) (-k) \vert  \\
\ & \leq \parallel \vert u(t,\cdot) \vert^2 \parallel_{L^1(\RR^3)} = \parallel u(t,\cdot) \parallel^2_{L^2(\RR^3)} = C(w_0)^2 < + \infty .
\end{array}
\end{equation}
\renewcommand\arraystretch{1}
H\"older's inequality gives rise to
\[ \vert \int_{\RR^3} w (t,-k,0) \, \omk^{-1} \cI \alpha (t,k) \, dk \vert \leq \parallel w (t,\cdot,0) \parallel_{L^4 (\RR^3)}
\parallel \omdotk^{-1} \cI \alpha (t,\cdot) (t,\cdot) \parallel_{L^{4/3} (\RR^3)} . \]
On the other hand, the sup norm estimate (\ref{estilinftysurmew}) allows to deal with the $ L^4 $-norm of $ w(t,\cdot,0) $ through
tame estimates according to
\renewcommand\arraystretch{2}
\[ \begin{array}{rl}
\displaystyle \parallel w (t,\cdot,0) \parallel_{L^4 (\RR^3)} \! \! \! & \displaystyle \leq C(w_0) \Bigl( \int_{\RR^3}
\vert w (t,k,0) \vert^2 dk \Bigr)^{1/4} = C(w_0) \parallel \cF_x \bigl( \vert u(t,\cdot) \vert^2 \bigr) \parallel_{L^2 (\RR^3)}^{1/2} \\
\ & \displaystyle \leq (2 \pi)^{3/4} \, C(w_0) \parallel \vert u(t,\cdot) \vert^2 \parallel_{L^2 (\RR^3)}^{1/2} \lesssim \parallel u (t,\cdot)
\parallel_{L^4 (\RR^3)} .
\end{array} \]
\renewcommand\arraystretch{1}
Then, by Gagliardo-Nirenberg interpolation inequality, we can assert that
\[ \parallel u (t,\cdot) \parallel_{L^4 (\RR^3)} \leq \parallel \nabla_x u (t,\cdot) \parallel^{3/4}_{L^2 (\RR^3)} \parallel u (t,\cdot)
\parallel^{1/4}_{L^2 (\RR^3)} \lesssim \parallel \nabla_x u (t,\cdot) \parallel^{3/4}_{L^2 (\RR^3)} . \]
On the other hand, by H\"older's inequality, we have (since $ d = 3 $)
\[ \parallel \omdotk^{-1} \cI \alpha (t,\cdot) \parallel_{L^{4/3} (\RR^3)} \leq \Bigl( \int_{\RR^3}  \omk^{-4} dk \Bigr)^{1/4}
\parallel \cI \alpha (t,\cdot) (t,\cdot) \parallel_{L^2 (\RR^3)} \lesssim  \parallel \alpha (t,\cdot) \parallel_{L^2 (\RR^3)} . \]
Combining the above information, there remains
\[ \vert \int_{\RR^3} w (t,-k,0) \, \omk^{-1} \cI \alpha (t,k) \, dk \vert \lesssim \parallel \nabla_x u (t,\cdot) \parallel^{3/4}_{L^2 (\RR^3)}
\parallel \alpha (t,\cdot) \parallel_{L^2 (\RR^3)} . \]
Remark that
\[ \forall \eta \in \RR_+^* , \quad \exists C(\eta) \in \RR_+^* ; \qquad \forall (a,b) \in (\RR_+^*)^2 , \quad a^{3/4} b \leq C(\eta) + \eta a^2
+ \eta b^2 . \]
In particular, by choosing $ \eta $ sufficiently small, we can guarantee that
\[ \vert 2 \int_{\RR^3} w (t,-k,0) \, \omk^{-1} \cI \alpha (t,k) \, dk \vert \leq C  + \frac{1}{2} \parallel \nabla_x u (t,\cdot)
\parallel^2_{L^2 (\RR^3)} + \frac{1}{2}  \parallel \alpha (t,\cdot) \parallel^2_{L^2 (\RR^3)} , \]
from which (\ref{equivalencedeEeten}) follows easily .
\end{proof}

\noindent At this stage,  several remarks are in order:
\begin{enumerate}
\item Consider SKG-(\ref{inidataSKGinif}). Assuming (\ref{regularinidata}),
the upper bound
\begin{equation*} \label{supequivalencedeEeten}
\sup_{t \in \RR} \ \bigl( \parallel u(t,\cdot) \parallel_{H^1 (\RR^d)} + \parallel \part_t A(t,\cdot) \parallel_{L^2 (\RR^d)} +
\parallel A(t,\cdot) \parallel_{H^1 (\RR^d)} \bigr) < + \infty .
\end{equation*}
 is well-known, see for instance the articles \cite{MR778979,MR519635} where it is related to the construction of
 {\it strong} global solutions $ (u,A) $ to (\ref{PSKG}).
\item Knowing that $ w (t,\cdot) $ is in $ L^2 $ is far from sufficient in order to give a sense to (\ref{l2bisnormconserved}), and
even more to (\ref{l2bissnormconserved}). This is because the trace of $ w (t,\cdot) $ or $ \Delta_\eta w (t,\cdot) $ at the origin
is not accessible via some $ L^2 $-information
concerning $ w $. However, we will see in Paragraph \ref{subsubsec:structureofprepared data} that these difficulties disappear in the
case of $ L^2 $-prepared data.
\item It should be kept in mind that generic solutions to
(\ref{transporteqonw}) or (\ref{SKGrevisited}) that are not generated by prepared initial data
$w_0$ need not correspond to solutions of (\ref{PSKG}). Even when such a correspondence exists,
the regularity and integrability properties of $w$ and $\alpha$ cannot, in general, be readily
interpreted in terms of the original variables $u$ and $A$. This point will be discussed in more
detail in Section~\ref{sec:L2preparedsol}.
\end{enumerate}

\noindent From this perspective, the system~\eqref{SKGrevisited} may be regarded as a genuine extension of
\eqref{PSKG}: it contains the latter as a particular case while allowing for additional microscopic
degrees of freedom.


\section{Fourier--Moyal Cauchy problem}\label{sec:cauchyproblem}
In this section, we investigate the Fourier--Moyal (FM) equation
\eqref{transporteqonw}--\eqref{inidatatransporteqonw} in its own right.
Throughout this section, we work under the following assumptions.

\begin{assump}[Conditions on the cutoff $\tilde\chi$ and on the modulus of continuity $\omega$]\label{assumpFM} \
\begin{enumerate}
    \item The cutoff $\tilde\chi$ satisfies
    \[
    \tilde\chi \in L^{\tilde p}_{\tilde s},
    \qquad
    (\tilde p,\tilde s)\in [1,+\infty]\times\mathbb R.
    \]
    \item The modulus of continuity $\omega$ is concave\footnote{In particular,
    $\omega$ is subadditive, uniformly continuous, and sublinear.}
    and satisfies \eqref{modconw} for some
    $ \tilde\imath\in]0,1] $.
\end{enumerate}
\end{assump}

\medskip
\noindent Given $ (l,s,\imath)\in \mathbb N \times [1,+\infty ] \times [0,\tilde \imath] $, we start with initial data satisfying
$$ \exists \, q \in [1,+\infty ] \, ; \qquad \tilde \alpha_0 \in L^{q}_{l+s + \imath+\frac 1 2} \, , \qquad w_0 \in W^{l,q}_{s,\imath}
(\cC^{\omega}_b) \, . $$

\noindent The purpose of this section is to prove that the initial value problem \eqref{transporteqonw}-\eqref{inidatatransporteqonw} is strongly well-posed in the space $W^{l,q}_{s,\imath}(\cC^{\omega}_b)$, provided that the exponent $q$ is suitably chosen in terms of $(\tilde p,\tilde s,\tilde\imath,\imath)$. In Subsection~\ref{contiproYukawamap}, we investigate the mapping properties of the operator $\cY\cB$ in this functional framework. Then, in Subsection~\ref{subsec:locmildsol}, we establish the local well-posedness of the problem by proving existence and uniqueness of solutions. Notably, $w(t,k,\cdot)$ remains uniformly $\omega$-continuous in the variable $\eta$, locally in time, where $\omega$ is the same modulus of continuity as that of the initial datum $w_0$. In particular, this shows that the corresponding microlocal regularity is propagated by the flow.


\subsection{Continuity properties of the Yukawa bilinear map}\label{contiproYukawamap} A common way to solve the integral equation (\ref{integralversionSKGS}) is to apply a fixed point argument. This means to study the continuity
properties of the $ \cY \cB $-map. Lemma \ref{continuityYBbasic} provides
already some guidance in this direction. But the conclusions of Lemma \ref{continuityYBbasic} are not sufficient and far from
optimal for two main reasons:
\begin{itemize}
  \item First, they do not fully exploit the assumption $ \tilde \chi \in L^{\tilde p}_{\tilde s} $ which, for $ \tilde s \in \mathbb R_+^* $ or $ \tilde p \in [1,+\infty[ $, adds supplementary elements compared to the use of $ \tilde \chi \in L^\infty $ as in \eqref{continuityYBbasicestimates}. In particular, when dealing with $ \text{SKG}_{\chi} $, the function $ \tilde \chi $ is as in \eqref{cutofftichi} with a cutoff $ \chi $  satisfying (at least) the condition  \eqref{voevenchi}, so that $ \tilde \chi \in L^\infty_1 $. It is important to take advantage of the condition $ \tilde s = 1 \in \mathbb R_+^* $.
  \item Secondly, they do not guarantee that the function $ \cB_{\tilde \chi}[ w_1, w_2 ] (k,\cdot) $ is still continuous near the position $ \eta = 0 $. However, in order to implement a Picard scheme, this information is repeatedly needed because the definition of $ \cB_{\tilde \chi}[ w_1, w_2 ] $ requires to compute the trace at $ \eta = 0 $ of the first entry $ w_1 $. The uniform continuity of $ w_2 $ with respect to $ \eta $ helps to address this shortfall, as seen below.
\end{itemize}

\begin{lem}[Bilinear continuity properties of the $\cY\cB$-map]
\label{continuityYBfirstcase}
Suppose that Assumption~\ref{assumpFM} holds. 
Let
\[
(l,s,\imath) \in \NN \times \RR_+ \times [0,\tilde\imath],
\]
and choose $(p,q,r) \in [1,+\infty]^3$ such that
\[
\frac{1}{\tilde p}+\frac{1}{p}+\frac{1}{q}
=1+\frac{1}{r}.
\]
 Then the
$\cY\cB$-map defines a continuous bilinear map
\begin{equation}\label{intoitself}
\cB_{\tilde\chi} :
L^p_{l+s-\tilde s+\tilde\imath}(\cC^0_b)
\times
W^{l,q}_{s,\imath}(\cC^\omega_b)
\longrightarrow
W^{l,r}_{s,\imath}(\cC^\omega_b).
\end{equation}
More precisely, one has the estimate
\begin{equation}\label{continuityYBevolutedestimates}
\bigl\|
\cB_{\tilde\chi}[w,\slw]
\bigr\|_{W^{l,r}_{s,\imath}(\cC^\omega_b)}
\lesssim
\|\tilde\chi\|_{L^{\tilde p}_{\tilde s}}\,
\|w\|_{L^p_{l+s-\tilde s+\tilde\imath}(L^\infty)}\,
\|\slw\|_{W^{l,q}_{s,\imath}(\cC^\omega_b)} .
\end{equation}
\end{lem}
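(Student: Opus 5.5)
We argue directly from the definition \eqref{bilitodefine}: for fixed $k$ the function $\eta\mapsto \cB_{\tilde\chi}[w,\slw](k,\eta)$ is an integral over $\tilde k$ of
\[
\sin(\tilde k\cdot\eta/2)\,\tilde\chi(\tilde k)\,\cI w(\tilde k,0)\,\cI\slw(k-\tilde k,\eta).
\]
The plan is to estimate, pointwise in $k$, both the $L^\infty_\eta$ norm and the $\omega$-seminorm $\downharpoonright_\omega$ of this integral by a convolution in $\tilde k$. Then we conclude with the three-function Young inequality, which matches the exponent relation $1/\tilde p+1/p+1/q=1+1/r$. Since $\cI$ is an isometry on every space involved, we may drop it. We first treat $l=0$.

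\textbf{Sup norm.} We use $|\sin|\le 1$, which gives
\[
\|\cB(k,\cdot)\|_{L^\infty}\lesssim\int|\tilde\chi(\tilde k)|\,|w(\tilde k,0)|\,\|\slw(k-\tilde k,\cdot)\|_{L^\infty}\,d\tilde k .
\]
To absorb the weight $\omk^{s+\imath}$ we use Peetre's inequality,
\[
\omk^{s+\imath}\lesssim \omtilk^{s+\imath}\,\omdifk^{s+\imath}.
\]
This places $\omtilk^{\tilde s}$ on $\tilde\chi$ and $\omtilk^{s+\imath-\tilde s}\le\omtilk^{s-\tilde s+\tilde\imath}$ on $w$, using $\imath\le\tilde\imath$.

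\textbf{Seminorm.} This is the key step. We split the difference as
\[
\sin(\tilde k\cdot(\eta+h)/2)\,\slw(\cdot,\eta+h)-\sin(\tilde k\cdot\eta/2)\,\slw(\cdot,\eta)
=\bigl[\sin(\tilde k\cdot(\eta+h)/2)-\sin(\tilde k\cdot\eta/2)\bigr]\slw(\cdot,\eta+h)+\sin(\tilde k\cdot\eta/2)\bigl[\slw(\cdot,\eta+h)-\slw(\cdot,\eta)\bigr].
\]
The second piece is bounded by $\omega(|h|)\,|\slw(k-\tilde k,\cdot)\!\downharpoonright_\omega$, and we weight it with $\omk^s\lesssim\omtilk^s\omdifk^s$.

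For the first piece we use
\[
|\sin a-\sin b|\le\min(2,|a-b|)\le 2\min(1,|\tilde k||h|)\lesssim(|\tilde k||h|)^{\tilde\imath}.
\]
By \eqref{modconw}, together with concavity for $|h|\ge\bar z$ (where $\omega(|h|)\ge\omega(\bar z)>0$ and the bound $2$ suffices), this is $\lesssim\omtilk^{\tilde\imath}\,\omega(|h|)$. Hence the first piece is controlled by $\omega(|h|)\,\omtilk^{\tilde\imath}\,\|\slw(k-\tilde k)\|_{L^\infty}$.

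This is where the loss $\tilde\imath$ on $w$ originates. After multiplying by $\omk^s\lesssim\omtilk^s\omdifk^s$, we still need $\omdifk^{s}\|\slw\|_{L^\infty}$, which is dominated by $\omdifk^{s+\imath}\|\slw\|_{L^\infty}$. The weight on $w$ is then $\omtilk^{s+\tilde\imath-\tilde s}$.

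\textbf{Conclusion for $l=0$.} Both pieces are bounded by
\[
\bigl(\omdotk^{\tilde s}|\tilde\chi|\bigr)*\bigl(\omdotk^{s-\tilde s+\tilde\imath}\|w(\cdot,0)\|\bigr)
\]
convolved with the $L^q_{s,\imath}(\cC^\omega_b)$ density of $\slw$. Young's inequality yields \eqref{continuityYBevolutedestimates} with $l=0$. Note that only $\|w(\cdot,0)\|\le\|w\|_{L^\infty_\eta}$ enters, so we never need continuity of $w$ here beyond the existence of the trace.

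\textbf{Case $l\ge1$.} We apply $\part^\alpha_\eta$ and use Leibniz. Derivatives falling on the sine produce factors $|\tilde k|^{|\beta|}$ times a bounded trigonometric function, which again satisfies the same $\omega$-difference bound with an extra $\omtilk^{\tilde\imath}$. The weight $\omk^{l-|\alpha|}$ is split by Peetre. The factors $\omtilk^{|\beta|}\omtilk^{l-|\alpha|}$ are absorbed by the extra $l$ in the weight on $w$, while $\omdifk^{l-|\alpha|}\part^{\alpha-\beta}_\eta\slw$ is controlled by the $W^{l,q}_{s,\imath}$ norm, since $l-|\alpha|\le l-|\alpha-\beta|$.

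\textbf{Main obstacle.} The delicate step is the seminorm estimate for the sine increment. It requires getting exactly $\omtilk^{\tilde\imath}$ rather than $\omtilk$, via the min bound and \eqref{modconw} including the large-$|h|$ regime, and also checking that $s\ge0$ makes the Peetre splitting valid in the needed direction.
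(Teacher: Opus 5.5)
Your route is the paper's own: the same splitting into the weighted sup norm and the $\omega$-seminorm, Peetre's inequality, and the same decomposition of the $\eta$-increment, in which the sine increment produces the loss $\omtilk^{\tilde\imath}$. The Leibniz reduction for $l\geq 1$ is also the same. Your bound $\min(1,|\tilde k||h|)\leq(|\tilde k||h|)^{\tilde\imath}$, with \eqref{modconw} for $|h|\leq\bar z$ and plain monotonicity of $\omega$ for $|h|\geq\bar z$, is a slightly leaner version of the paper's computation $\sup_{z>0}\min(1,|\tilde k|z)/\omega(z)=1/\omega(1/|\tilde k|)$ and of its separate treatment of $|\eta_1-\eta_2|\geq\bar z$. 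A minor correction: $\cI_{k,\eta}$ is a contraction, not an isometry; that is all you use, since the weights are even.

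\textbf{The gap: the final exponent count.} This step does not hold as you state it, and it is equally implicit in the paper's ``by Young's and H\"older's inequality''. After Peetre, the $\tilde k$-integrand is the pointwise \emph{product} of $|\check\chi(\tilde k)|=\omtilk^{\tilde s}|\tilde\chi(\tilde k)|$ and $W(\tilde k):=\omtilk^{s-\tilde s+\tilde\imath}\|w(\tilde k,\cdot)\|_{L^\infty}$, convolved against the weighted density of $\slw$. It is not a double convolution. You therefore need H\"older, $\|\check\chi\,W\|_{L^m}\leq\|\check\chi\|_{L^{\tilde p}}\|W\|_{L^p}$ with $1/m=1/\tilde p+1/p$, followed by Young with $1/m+1/q=1+1/r$. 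Young requires $m\geq 1$, i.e. $1/\tilde p+1/p\leq 1$, equivalently $q\leq r$. The relation $1/\tilde p+1/p+1/q=1+1/r$ alone does not give this.

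\textbf{The estimate is false without this condition.} Take $l=s=\imath=\tilde s=0$, $\tilde p=p=r=1$, $q=+\infty$, $\tilde\chi$ a Gaussian, and $\slw\equiv 1$. Let $w(\tilde k,\eta)=i\phi(\tilde k)$ with $\phi(\tilde k)=\tilde k_1\rho(\tilde k)$, where $\rho\geq 0$ is smooth, even, compactly supported and not identically zero. Then $\cI w=w$, $\cI\slw=\slw$, and
$\cB_{\tilde\chi}[w,\slw](k,\eta)=\frac{2i}{(2\pi)^d}\int\sin(\tilde k\cdot\eta/2)\,\tilde\chi(\tilde k)\,\phi(\tilde k)\,d\tilde k$
is independent of $k$. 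It is also nonzero: for small $\eta$ along the first axis the integrand is nonnegative and not identically zero. So it is not in $L^1_k(\cC^\omega_b)$, whereas the right-hand side of \eqref{continuityYBevolutedestimates} is finite.

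You should add $q\leq r$ to the hypotheses. It holds in every use of the lemma in the paper ($q=r$ with $p=\tilde p/(\tilde p-1)$, or $p=q=r=+\infty$ with $\tilde p=1$), so nothing downstream is affected.
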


\begin{proof} We start by discussing the case $ l = 0 $. From the definition \eqref{calculnormcontinuouscondition} and the triangle inequality, we get that
\begin{equation} \label{diffautilisercarinhoooraa}
\parallel \cB_{\tilde \chi}  [ w, \slw ] \parallel_{L^r_{s,\imath} (\cC^{\omega}_b) } \leq \parallel \cB_{\tilde \chi}  [ w, \slw ] \parallel_{L^r_{s + \imath}
(L^\infty)} + \parallel k \mapsto \vert \cB_{\tilde \chi}  [ w, \slw ] (k,\cdot) \! \downharpoonright_\omega \parallel_{L^r_s} .
\end{equation}
We proceed in three distinct steps. First ($i$), we estimate the first term in the right hand side. Secondly ($ii$), we consider the part involving $ \vert \, \cdot  \downharpoonright_\omega $. Thirdly ($iii$), we examine the case $ l \in \NN^* $.

 \smallskip

 \noindent $(i) - $ {\it Control of the sup norm} - By assumption
\begin{equation} \label{assontilchi} \exists \, \check \chi \in L^{\tilde p} \, ; \qquad \tilde \chi (k)= \omk^{-\tilde s} \ \check \chi (k) \, .
\end{equation}
 Observe that
\begin{equation} \label{astucepeetre}
\begin{array}{rl}
\displaystyle \omk^{s+\imath} \, \vert \cB_{\tilde \chi} \lbrack w , \slw \rbrack (k,\eta) \vert \lesssim  \int_{\RR^d} \frac{\omk^{s+\imath}}{\omdifk^{s+\imath}
\omtilk^{s+\imath}} \! \! \! & \displaystyle \vert \check \chi(\tilde k) \vert \, \omtilk^{s-\tilde s +\imath} \vert \cI_{k,\eta}  w (\tilde k,0) \vert \\
\ & \displaystyle \times  \omdifk^{s+\imath} \vert \cI_{k,\eta} \slw ( k-\tilde k, \eta) \vert \ d \tilde k .
\end{array}
\end{equation}
Recall Peetre's inequality
\begin{equation} \label{peetres}
 \forall \, \check s \in \RR_+ , \qquad \frac{\omk^{\check s}}{\omdifk^{\check s} \ \omtilk^{\check s}} \leq 2^{\, \check s /2} .
 \end{equation}
 Since $ s+\imath \geq 0 $, we can apply \eqref{peetres} with $ \check s = s +\imath $.
 It follows that
 $$
\begin{array}{rl}
\displaystyle \omk^{s+\imath} \, \parallel  \cB_{\tilde \chi} \lbrack w , \slw \rbrack (k,\cdot) \parallel_{L^\infty} \lesssim  \int_{\RR^d} \! \! \! & \displaystyle \vert \check \chi(\tilde k) \ \vert \, \omtilk^{s-\tilde s+\imath} \  \bigl( \parallel w (\tilde k,\cdot) \parallel_{L^\infty} +  \parallel w (-\tilde k,\cdot) \parallel_{L^\infty} \bigr) \\
\ & \displaystyle \times  \omdifk^{s+\imath} \ \bigl( \parallel \slw ( k-\tilde k, \cdot) \parallel_{L^\infty} + \parallel \slw ( -k+\tilde k, \cdot) \parallel_{L^\infty} \bigr) \ d \tilde k .
\end{array} $$
By Young's and H\"older's
inequality, there remains
$$ \parallel \cB_{\tilde \chi} \lbrack w , \slw \rbrack \parallel_{L^r_{s + \imath} (L^\infty)}\, \lesssim \, \parallel \check \chi \parallel_{L^{\tilde p}} \ \parallel
w \parallel_{L^p_{s -\tilde s+\imath}(L^\infty)} \ \parallel \slw \parallel_{L^{q}_{s+\imath}(L^\infty)} . $$
Recall that
$ \imath \leq \tilde \imath $ and that $ \parallel \check \chi \parallel_{L^{\tilde p}} = \parallel \tilde \chi \parallel_{L^{\tilde p}_{\tilde s}} $. Thus, this can be bounded by
\begin{equation} \label{astucepeetreprim}
\qquad \parallel \cB_{\tilde \chi} \lbrack w , \slw \rbrack \parallel_{L^r_{s + \imath} (L^\infty)}\, \lesssim \, \parallel \tilde \chi \parallel_{L^{\tilde p}_{\tilde s}} \ \parallel
w \parallel_{L^p_{s -\tilde s+ \tilde \imath}(L^\infty)} \ \parallel \slw \parallel_{L^{q}_{s,\imath}(\cC^\omega_b)} .
\end{equation}

\smallskip

 \noindent $(ii) - $ {\it Control of the H\"older coefficient} -
 Let $ (\eta_1,\eta_2) \in \RR^d \times \RR^d $ with $ \eta_1 \not = \eta_2 $. If $ \bar z \leq \vert \eta_1 - \eta_2 \vert $,
 coming back to \eqref{modconw}, we can assert that $ 0 < c \, \bar z^{\tilde \imath} \leq \omega (\vert \eta_1 - \eta_2 \vert) $. As a consequence
$$
  \vert \cB_{\tilde \chi} \lbrack w , \slw \rbrack (k,\cdot) \! \downharpoonright_\omega \, \leq \! \! \!
\sup_{0 < \vert \eta_1 - \eta_2 \vert \leq \bar z} \! \! \! \frac{\vert \cB_{\tilde \chi} \lbrack w , \slw \rbrack (k,\eta_1) - \cB_{\tilde \chi} \lbrack w , \slw \rbrack (k,\eta_2) \vert}{\omega( \vert \eta_1 - \eta_2 \vert)} + \frac{2}{c \, \bar z^{\tilde \imath}} \parallel \cB_{\tilde \chi} \lbrack w , \slw \rbrack (k,\cdot) \parallel_{L^\infty} . $$
By repeating the arguments of ($ i $), with $ s \geq 0 $ (in place of $ s + \imath $), it is easy to infer that
\begin{equation} \label{repeatingrim} \parallel \cB_{\tilde \chi} \lbrack w , \slw \rbrack  \parallel_{L^r_s(L^\infty)} \lesssim \, \parallel \tilde \chi \parallel_{L^{\tilde p}_{\tilde s}} \ \parallel
w \parallel_{L^p_{s -\tilde s}(L^\infty)} \ \parallel \slw \parallel_{L^{q}_{s}(L^\infty)} .
\end{equation}
From now on, we focus on the case $ 0 < \vert \eta_1 - \eta_2 \vert \leq  \bar z $.
The best way to proceed is to directly examine differences. We have
\renewcommand\arraystretch{2}
\begin{equation} \label{diffautiliser}
 \begin{array}{rl}
\displaystyle \omega \! \! \! \! \! & \displaystyle (\vert \eta_1 -\eta_2 \vert)^{-1} \, \vert \cB_{\tilde \chi} \lbrack w , \slw \rbrack (k,\eta_1) -
\cB_{\tilde \chi} \lbrack w , \slw \rbrack (k,\eta_2) \vert \\
\ & \displaystyle \lesssim \int_{\RR^d} \omtilk^{-\tilde s} \, \vert \check  \chi(\tilde k) \vert \, \vert \cI_{k,\eta} w(\tilde k,0) \vert \, \vert \cI_{k,\eta} \slw (k-\tilde k, \cdot)
\downharpoonright_\omega \, d \tilde k  \\
\ & \displaystyle \ \ \ + \int_{\RR^d} \frac{\vert \sin (\tilde k \cdot \eta_1 /2 ) - \sin (\tilde k \cdot \eta_2 /2 ) \vert}{\omega (\vert \eta_1 -\eta_2 \vert)} \, \vert \tilde \chi(\tilde k) \vert \, \vert \cI_{k,\eta} w(\tilde k,0) \vert \, \vert \cI_{k,\eta} \slw (k-\tilde k, \eta_2) \vert \, d \tilde k  .
\end{array}
\end{equation}
\renewcommand\arraystretch{1}

\begin{rem}[Need for the modulus $ \omega $]
The estimate \eqref{diffautiliser} remains valid when $ \omega \equiv 1 $, provided that the seminorm
$ \vert \cI_{k,\eta} \slw (k-\tilde k, \cdot) \downharpoonright_\omega $
is replaced by the difference
\[
\bigl| \slw (k-\tilde k, \eta_1) - \slw (k-\tilde k, \eta_2) \bigr|
=o \bigl( |\eta_1-\eta_2| \bigr).
\]
However, the corresponding little-$o$ estimate may depend on the triple
$ (k-\tilde k,\eta_1,\eta_2) $. In contrast, estimating the integral in the
second line of \eqref{diffautiliser} requires a quantitative bound that is
uniform with respect to $ \eta_1 $ and $ \eta_2 $. This is one of the main
reasons for introducing the seminorm
$ \vert \, \cdot \downharpoonright_\omega $, which provides precisely such
uniform control.
\end{rem}

\noindent Observe that
 \begin{equation} \label{diffsinaexploiter}
 \sup_{0 < \vert \eta_1 - \eta_2 \vert \leq \bar z} \frac{\vert \sin (\tilde k \cdot \eta_1 /2 ) - \sin (\tilde k \cdot \eta_2 /2 ) \vert}{\omega (\vert \eta_1 -\eta_2 \vert)}
 \leq 2 \sup_{0 < z \leq \bar z} \ \frac{\min (1 \, ; \vert \tilde k \vert \, z)}{\omega (z)} .
 \end{equation}
Since $ \omega $ is increasing, we can assert that
\[ \sup_{1 / \vert \tilde k \vert \leq z} \ \frac{\min (1 \, ; \vert \tilde k \vert \, z)}{\omega (z)} = \sup_{1 / \vert \tilde k \vert \leq z} \
\frac{1}{\omega (z)} = \frac{1}{\omega (1/\vert \tilde k \vert)} \, .  \]
 Since $ \omega $ is concave, we can also retain that
 \[ \sup_{0 < z \leq 1/ \vert \tilde k \vert} \ \frac{\min (1 \, ; \vert \tilde k \vert \, z)}{\omega (z)} = \sup_{0 < z \leq 1/ \vert
 \tilde k \vert} \ \frac{\vert \tilde  k \vert \, z}{\omega (z)} = \frac{1}{\omega (1/\vert \tilde k \vert)} \, .  \]
 In short, exploiting \eqref{modconw}, we find that
  \begin{equation} \label{diffsinaexploiterfinaal}
  \sup_{z \in \RR^*_+} \ \frac{\min (1 \, ; \vert \tilde k \vert \, z)}{\omega (z)} = \frac{1}{\omega (1/\vert \tilde k \vert)} \leq \frac{\vert \tilde k \vert^{\tilde \imath}}{c} \lesssim \ \omtilk^{\tilde \imath} .
  \end{equation}
\begin{rem}[On the role of $ \tilde \imath $]\label{remimpactimath}
For $1 \ll \vert \tilde k \vert$, the computation of
$\cB_{\tilde \chi}\lbrack w , \slw \rbrack (k,\cdot)$
involves the rapidly oscillating sine function. As shown above, these
oscillations lead to a loss quantified by the factor
$\omtilk^{\tilde \imath}$. Nevertheless, by suitably modifying the modulus of
continuity $\omega$, as explained in
Remark~\ref{remtildeimath}, the exponent
$\tilde \imath \in \mathbb{R}_+^*$ can be chosen arbitrarily small.
\end{rem}

\noindent Multiply \eqref{diffautiliser}
by $ \omk^s $ and exploit (\ref{diffsinaexploiterfinaal})
to extract
\renewcommand\arraystretch{2}
\[ \begin{array}{l}
\displaystyle
\omk^s \ \sup_{0 < \vert \eta_1 - \eta_2 \vert \leq \bar z} \! \! \! \frac{ \vert \cB_{\tilde \chi} \lbrack w , \slw \rbrack (k,\eta_1) - \cB_{\tilde \chi} \lbrack w , \slw \rbrack (k,\eta_2) \vert}{\omega( \vert \eta_1 - \eta_2 \vert)} \\
\displaystyle \qquad \qquad \quad \lesssim \int_{\RR^d} \frac{\omk^{s}}{\omdifk^{s}
\omtilk^{s}} \ \vert \check \chi(\tilde k) \vert \ \omtilk^{s-\tilde s} \, \vert \cI_{k,\eta}  w (\tilde k,0) \vert \   \omdifk^{s} \, \vert \cI_{k,\eta} \slw ( k-\tilde k, \cdot) \! \downharpoonright_\omega \, d \tilde k \smallskip \\
\displaystyle \qquad \qquad \quad
+ \int_{\RR^d}  \frac{\omk^{s}}{\omdifk^{s}
\omtilk^{s}} \ \vert \check \chi(\tilde k) \vert \ \omtilk^{s-\tilde s+\tilde  \imath} \, \vert \cI_{k,\eta}  w (\tilde k,0) \vert \   \omdifk^{s} \parallel \cI_{k,\eta} \slw ( k-\tilde k, \cdot) \parallel_{L^\infty_\eta} \, d \tilde k \, .
\end{array}
\]
\renewcommand\arraystretch{1}

\noindent We apply  \eqref{peetres} with $ \check s = s \geq  0 $. Then, we follow the same  steps as in part ($ i )$ to control the right hand sides. With \eqref{repeatingrim}, this furnishes
  \renewcommand\arraystretch{1.2}
\begin{equation} \label{diffautilisercarinhoooajout}
 \begin{array}{rl}
\displaystyle \parallel \vert \cB_{\tilde \chi} \lbrack w , \slw \rbrack (k,\cdot) \! \downharpoonright_\omega \parallel_{L^r_{s}} \! \! \! \! & \lesssim  \, \parallel \tilde \chi \parallel_{L^{\tilde p}_{\tilde s}} \ \parallel
w \parallel_{L^p_{s -\tilde s}(L^\infty)} \ \parallel \slw \parallel_{L^{q}_{s}(L^\infty)} \\
\ & \displaystyle + \,
\parallel \check \chi \parallel_{L^{\tilde p}} \ \parallel w \parallel_{L^p_{s-\tilde s}(L^\infty)} \, \parallel \! \vert \slw (k,\cdot) \! \downharpoonright_\omega \parallel_{L^{q}_{s}} \\
\ & \displaystyle +
\parallel \check \chi \parallel_{L^{\tilde p}} \ \parallel w \parallel_{L^p_{s-\tilde s+\tilde \imath}(L^\infty)} \, \parallel \slw \parallel_{L^{q}_{s} (L^\infty)} \\
\! \! & \lesssim \, \parallel \tilde \chi \parallel_{L^{\tilde p}_{\tilde s}} \ \parallel
w \parallel_{L^p_{s -\tilde s+ \tilde \imath}(L^\infty)} \ \parallel \slw \parallel_{L^{q}_{s,\imath}(\cC^\omega_b)} .
\end{array}
\end{equation}
\renewcommand\arraystretch{1}

\noindent By adding  \eqref{astucepeetreprim} and  \eqref{diffautilisercarinhoooajout} as indicated in \eqref{diffautilisercarinhoooraa}, we recover \eqref{continuityYBevolutedestimates} for $ l = 0 $.

\medskip

 \noindent $(iii) -$ {\it The case $ l \in \NN^* $} - Introduce
 \[ \tilde \chi_\beta (\tilde k) := \tilde k^\beta \ \tilde \chi (\tilde k) \in L^{\tilde p}_{\tilde s - \vert \beta \vert} \, , \qquad \forall \beta \in \mathbb N^d \, . \]
 Applying Leibniz rule, for all $ \alpha \in \NN^d $ with $ \vert \alpha \vert \leq l $, we
 find that
\[ \begin{array}{rl}
\displaystyle \part^\alpha_\eta \cB_{\tilde \chi} \lbrack w , \slw \rbrack (k,\eta) = \sum_{\beta \leq \alpha} \, 2^{1-\vert \beta \vert}
\left( \begin{array}{c}
\alpha \\
\beta
\end{array} \right) \int_{\RR^d} \! \! \! & \displaystyle \sin^{(\vert\beta \vert)} (\tilde k \cdot \eta/2) \ \tilde \chi_\beta (\tilde k) \\
\ & \displaystyle \times \, \cI_{k,\eta} w (\tilde k,0) \ \part^{\alpha-\beta}_\eta (\cI_{k,\eta} \slw ) (k-\tilde k,\eta)  \ d\tilde k .
\end{array} \]
Each term in the right hand side is of the type $ \cB_{\tilde \chi_\beta} $ with, when appropriate, the function $ \sin $ replaced by $ \cos $.
Since \eqref{diffsinaexploiter} is still true with $ \sin $ replaced by $ \cos $, the preceding method applies with
$ s + l - \vert \alpha \vert $ (instead of $ s $) which, as required, is such that $ s + l - \vert \alpha \vert \geq s \geq 0 $. As a consequence
 \[ \begin{array}{l}
 \displaystyle \parallel \omk^{l-\vert \alpha \vert} \,  \part^\alpha_\eta \cB_{\tilde \chi} \lbrack w , \slw \rbrack \parallel_{L^r_{s,\imath} (\cC^{\omega}_b)} = \, \parallel \part^\alpha_\eta \cB_{\tilde \chi} \lbrack w , \slw \rbrack \parallel_{L^r_{s+l-\vert \alpha \vert,\imath} (\cC^{\omega}_b) } \smallskip \\
 \qquad \qquad \lesssim \,
\parallel \tilde \chi_\beta \parallel_{L^{\tilde p}_{\tilde s-\vert \beta \vert}} \, \displaystyle \sum_{\beta \leq \alpha} \parallel w \parallel_{L^p_{l+ s- \tilde s + \imath - \vert \alpha \vert + \vert \beta \vert}
 (L^\infty_\eta) } \, \parallel \part_\eta^{\alpha-\beta} \slw \parallel_{L^{q}_{s+l-\vert \alpha \vert,\imath} (\cC^{\omega}_b) } \\
 \qquad \qquad \lesssim \,
\parallel \tilde \chi \parallel_{L^{\tilde p}_{\tilde s}} \ \displaystyle \parallel w \parallel_{L^p_{l+ s- \tilde s + \imath}
 (L^\infty) } \, \sum_{\beta \leq \alpha}   \parallel
 \omk^{l-\vert \alpha -\beta \vert} \, \part_\eta^{\alpha-\beta} \slw \parallel_{L^{q}_{s,\imath} (\cC^{\omega}_b) } \\
 \qquad \qquad \lesssim \,
\parallel \tilde \chi \parallel_{L^{\tilde p}_{\tilde s}} \ \displaystyle \parallel w \parallel_{L^p_{l+ s- \tilde s + \imath}
 (L^\infty) } \, \parallel \slw \parallel_{W^{l,q}_{s,\imath} (\cC^{\omega}_b) } .
 \end{array}   \]
After summation on $ \vert \alpha \vert \leq l $, we recover (\ref{continuityYBevolutedestimates}).
\end{proof}

\noindent Before turning to the local well-posedness theory, let us recall the embedding
\eqref{defdenormcomegabimathaajouter}, namely
\begin{equation}\label{defdenormcomegabimathaajouter-1}
\| w \|_{L^p_{\,l+s-\tilde s+\tilde\imath}(L^\infty)}
\leq
\| w \|_{L^p_{\,l+s-\tilde m}(\cC^\omega_b)},
\qquad
\tilde m:=\tilde s-\tilde\imath+\imath.
\end{equation}
In view of the estimate \eqref{continuityYBevolutedestimates}, no loss in powers
of $\omk$ occurs provided that $\tilde m\geq0$. When $\tilde s>0$, this condition
is satisfied by choosing $\tilde\imath=\tilde s$ and $\imath=0$. On the other
hand, if $\tilde s=0$, one must take
$\imath=\tilde\imath\in\mathbb{R}_+^*$. Throughout the remainder of this section,
we therefore choose the parameters
$(\tilde s,\tilde\imath,\imath)$ so that
\[
\tilde m=\tilde s-\tilde\imath+\imath\geq0.
\]

\begin{rem}[On the introduction of $L^p_{s,\imath}(\cC^\omega_b)$]\label{remintroani}
The additional weight $\omk^\imath$, with $\imath \in \mathbb{R}_+^*$, in the definition of the anisotropic norm on
$L^p_{s,\imath}(\cC^\omega_b)$ is introduced precisely to compensate for the loss
$\omtilk^{\tilde\imath}$ arising in the  estimates above. The trade-off is that one must impose stronger regularity assumptions when working in
$L^p_{s,\imath}(\cC^\omega_b)$ rather than in $L^p_{s,0}(\cC^\omega_b)$.
\end{rem}


\subsection{Local well-posedness}\label{subsec:locmildsol}
Recall that $\tilde \imath \in ]0,1]$ and $0\leq \imath\leq\tilde\imath$.
We further assume that $\tilde s\geq 0$ and set
\[
\tilde m:=\tilde s-\tilde\imath+\imath\geq 0.
\]
We decompose the parameter region for $(\tilde p,\tilde m)$ as
\[
\bigl\{(\tilde p,\tilde m)\,;\,1\leq\tilde p,\ 0\leq\tilde m\bigr\}
=L_0\cup \mathfrak B\cup L_1 \cup \mathfrak Q \, ,
\]
where
\[
\begin{array}{ll}
L_0
:=\bigl\{(\tilde p,0)\,;\,1\leq\tilde p\bigr\},
& \text{is a horizontal half-line,}
\\[1ex]
\mathfrak B
:=\bigl\{(\tilde p,\tilde m)\,;\,
1<\tilde p,\ 0<\tilde p\,\tilde m<(\tilde p-1)d\bigr\},
& \text{is an open region,}
\\[1ex]
L_1
:=\bigl\{(1,\tilde m)\,;\,0<\tilde m\bigr\},
& \text{is a vertical half-line,}
\\[1ex]
\mathfrak Q
:=\bigl\{(\tilde p,\tilde m)\,;\,
1<\tilde p,\ (\tilde p-1)d\leq\tilde p\,\tilde m\bigr\},
& \text{is the remaining region.}
\end{array}
\]

\begin{theo}[Local well-posedness]\label{mildtotal}
Let $\tilde\imath$, $\imath$ and $\tilde s$ be as above and let
$(\tilde p,\tilde m)\in L_0\cup \mathfrak B \cup L_1 \cup \mathfrak Q $.
Assume that $q$ satisfies
\begin{equation}\label{determineq}
\left\{
\begin{array}{ll}
\displaystyle
q=\frac{\tilde p}{\tilde p-1},
& \text{if }(\tilde p,\tilde m)\in L_0,
\\[2ex]
\displaystyle
\frac{\tilde p}{\tilde p-1}
\leq q
<
\frac{\tilde p d}
{(\tilde p-1)d-\tilde p\tilde m},
& \text{if }(\tilde p,\tilde m)\in \mathfrak B,
\\[3ex]
\displaystyle
\frac{\tilde p}{\tilde p-1}
\leq q\leq+\infty,
& \text{if }(\tilde p,\tilde m)\in \mathfrak Q,
\\[2ex]
q=+\infty,
& \text{if }(\tilde p,\tilde m)\in L_1.
\end{array}
\right.
\end{equation}
Fix any
\[
(l,s)\in\mathbb N\times\mathbb R_+,
\]
and assume that, for some $M\in\mathbb R_+$, the initial data satisfy
\begin{equation}\label{inidataboundMi}
\|\tilde\alpha_0\|_{L^q_{l+s+\imath+1/2}}
\leq M,
\qquad
\|w_0\|_{W^{l,q}_{s,\imath}(\cC^\omega_b)}
\leq M.
\end{equation}
Then there exists $\cT>0$ such that the Fourier-Moyal initial value problem
\eqref{transporteqonw}--\eqref{inidatatransporteqonw}
admits a unique solution
\[
w\in L^\infty\bigl([0,\cT];
W^{l,q}_{s,\imath}(\cC^\omega_b)\bigr).
\]
Moreover, $\cT$ can be chosen so that
\begin{equation}\label{boundMaftersolvingdeb}
\sup_{t\in[0,\cT]}
\|w(t,\cdot)\|_{W^{l,q}_{s,\imath}(\cC^\omega_b)}
\leq 2M.
\end{equation}
\end{theo}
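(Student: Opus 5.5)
We solve the integral formulation \eqref{integralversionSKGS}--\eqref{integralversionSKGSPhi} by a Banach fixed point argument. Set $X_\cT := L^\infty([0,\cT];W^{l,q}_{s,\imath}(\cC^\omega_b))$ and let $\mathcal{B}_{2M}$ be its closed ball of radius $2M$. The plan is to show that $\Phi$ maps $\mathcal{B}_{2M}$ into itself and is a contraction there, once $\cT$ is small. The argument has three ingredients: the free transport, the linear term $\cL_t$, and the quadratic term $\cQ_t$.

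\textbf{Free transport.} The map $w_0 \mapsto w_0(k,\eta+tk)$ is an isometry of $W^{l,q}_{s,\imath}(\cC^\omega_b)$. Indeed, for fixed $k$, the sup norm in $\eta$ and the seminorm $|\cdot\!\downharpoonright_\omega$ are translation invariant. Moreover, $\partial_\eta^\alpha$ commutes with the translation $\eta \mapsto \eta+tk$. The same observation handles the shifts $\eta+(t-s)k$ inside the Duhamel integral.

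\textbf{Choice of exponents.} Both $\cL_t$ and $\cQ_t$ are of the form $\cB_{\tilde\chi}[f,w(t)]$, where the first entry $f$ is independent of $\eta$. We apply Lemma \ref{continuityYBfirstcase} with $r=q$ and $1/p = 1-1/\tilde p$, that is $p=\tilde p'$. This gives
\[
\|\cB_{\tilde\chi}[f,w]\|_{W^{l,q}_{s,\imath}} \lesssim \|\tilde\chi\|_{L^{\tilde p}_{\tilde s}}\,\|f\|_{L^{\tilde p'}_{l+s-\tilde s+\tilde\imath}}\,\|w\|_{W^{l,q}_{s,\imath}}.
\]
The first entries only carry $L^q$ information: $\tilde\alpha_0$ by hypothesis, and $w(s,k,0)$ through $\|w\|_{W^{l,q}_{s,\imath}}$, which controls $\omk^{l+s+\imath}\|w(k,\cdot)\|_{L^\infty}$ in $L^q$. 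So we must pass from $L^q$ with weight $\omk^{l+s+\imath}$ to $L^{\tilde p'}$ with weight $\omk^{l+s-\tilde s+\tilde\imath}$. The gain in the weight is exactly $\tilde m=\tilde s-\tilde\imath+\imath\ge 0$.

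\textbf{Weighted H\"older step.} We need
\[
\|\omk^{-\tilde m} g\|_{L^{\tilde p'}}\lesssim \|g\|_{L^q}.
\]
This is where the case analysis \eqref{determineq} enters. For $q=\tilde p'$ it is trivial, which covers $L_0$ and the lower endpoint in $\mathfrak B$ and $\mathfrak Q$. For $q>\tilde p'$, H\"older requires $\omk^{-\tilde m}\in L^{\rho}$ with $1/\rho = 1/\tilde p' - 1/q$. This holds iff $\tilde m\rho>d$, which is equivalent to $q<\tilde p d/((\tilde p-1)d-\tilde p\tilde m)$ in $\mathfrak B$. In $\mathfrak Q$ we have $\tilde m\tilde p'\ge d$, so the condition holds for every finite $q$. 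For $q=\infty$ in $\mathfrak Q$ with equality $\tilde m\tilde p' = d$, there is a borderline. I would handle it by sacrificing an arbitrarily small part of the weight: this is allowed by Remark \ref{remtildeimath}, which lets us decrease $\tilde\imath$, or else by interpreting $\mathfrak Q$ in the strict sense at $q=\infty$. On $L_1$, where $\tilde p=1$, we have $p=\infty$ and $q=\infty$, so the step is trivial. The extra factor $\omk^{1/2}$ in $f_{\tilde\alpha_0}$ is absorbed by the assumption $\tilde\alpha_0\in L^q_{l+s+\imath+1/2}$. For $\cQ_t$, $|g_w(s,t,k,0)|\le |w(s,k,0)|$, so the first entry is bounded by $t\sup_s\|w(s)\|$.

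\textbf{Closing the argument.} Writing $C$ for the implicit constant, we obtain
\[
\|\Phi(w)\|_{X_\cT}\le M + C\cT\bigl(M\cdot 2M + \cT(2M)^2\bigr).
\]
By bilinearity, the difference satisfies
\[
\|\Phi(w)-\Phi(\tilde w)\|_{X_\cT}\le C\cT(M+\cT M)\|w-\tilde w\|_{X_\cT}.
\]
Choosing $\cT$ small in terms of $M$ gives existence, uniqueness in $\mathcal{B}_{2M}$, and the bound \eqref{boundMaftersolvingdeb}. Uniqueness in all of $X_\cT$ follows by a standard continuation argument on short intervals.

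\textbf{Expected obstacles.} There are two technical checks.

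The first is the time measurability of $t\mapsto w(t)$ in a non-separable space. I would define $X_\cT$ with the Duhamel integrals understood pointwise in $(k,\eta)$ and bound norms by Minkowski's inequality, which avoids Bochner integrability.

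The second, and the main obstacle, is the exponent bookkeeping in the weighted H\"older step, including the endpoint cases. The rest is routine given Lemma \ref{continuityYBfirstcase}.
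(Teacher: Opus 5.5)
This is essentially the paper's proof. Both arguments use a contraction on the ball of radius $2M$ in $L^\infty([0,\cT];W^{l,q}_{s,\imath}(\cC^\omega_b))$, translation invariance of the free transport, and Lemma~\ref{continuityYBfirstcase} with $r=q$ and $p=\tilde p/(\tilde p-1)$. Both then trade the weight gain $\tilde m$ for integrability, and your H\"older bookkeeping reproduces the ranges in \eqref{determineq}. You apply the weighted step only to $k\mapsto\omk^{l+s+\imath}\|f(k,\cdot)\|_{L^\infty}$, which is all the lemma uses. That is slightly cleaner than the paper's \eqref{renot}: its middle inequality invokes \eqref{clearthatbis} in the wrong direction, although its outer inequality still holds directly by \eqref{defdenormcomegabimathaajouter}.

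The one point to correct is your handling of the boundary of $\mathfrak Q$, namely $\tilde m\,\tilde p/(\tilde p-1)=d$, with $q=+\infty$.

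You are right that the argument breaks there. Since $\omk^{-\tilde m}\notin L^{\tilde p/(\tilde p-1)}$ and $\tilde\alpha_0$ is an arbitrary element of $L^\infty_{l+s+\imath+1/2}$, the first entry $f_{\tilde\alpha_0}$ of $\cL_t$ need not lie in $L^{\tilde p/(\tilde p-1)}_{l+s-\tilde s+\tilde\imath}$. So Lemma~\ref{continuityYBfirstcase} cannot even be applied to $\cL_t$. The paper asserts that in $\mathfrak Q$ the estimate holds for every $q\in[p,+\infty]$, and so overlooks exactly this case.

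Your first repair, however, is not admissible.
\begin{enumerate}
\item Condition \eqref{modconw} with an exponent smaller than $\tilde\imath$ is a \emph{stronger} requirement on $\omega$; for instance $\omega(z)=z$ satisfies it only with exponent $1$. Remark~\ref{remtildeimath} therefore lets you lower $\tilde\imath$ only after replacing $\omega$ by a larger modulus. That changes the space $W^{l,q}_{s,\imath}(\cC^\omega_b)$ in which \eqref{inidataboundMi} is assumed and \eqref{boundMaftersolvingdeb} is asserted.
\item When $\imath=\tilde\imath$ the trick gains nothing. The constraint $\imath\le\tilde\imath$ forces $\imath$ down as well, so $\tilde m=\tilde s-\tilde\imath+\imath$ cannot increase.
\end{enumerate}

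What your argument, like the paper's, actually proves is the theorem with the endpoint $q=+\infty$ removed on that boundary line, which is your second option. Covering that endpoint would need a sharper bilinear estimate than Lemma~\ref{continuityYBfirstcase}, not a re-choice of parameters.
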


\begin{proof} 
Our aim is to apply a fixed-point argument, for which we first need to set up an appropriate iterative scheme. The main point is to ensure that the nonlinear term
$\cB_{\tilde \chi}[w, \slw]$ belongs to the same functional space as $w$ and  $\slw$.
In view of \eqref{continuityYBevolutedestimates}, the first step is to control the
$L^p_{l+s-\tilde s+\tilde\imath}(L^\infty)$-norm in terms of the norm used in the iteration space.
Combining successively \eqref{defdenormcomegabimathaajouter-1} and \eqref{clearthatbis}, we can assert that
\begin{equation}\label{renot}
\begin{aligned}
\|w\|_{L^p_{l+s-\tilde s+\tilde\imath}(L^\infty)}
&\leq
\|w\|_{L^p_{l+s-\tilde s+\tilde\imath,0}(\cC^\omega_b)}
\\
&\leq
\|w\|_{L^p_{l+s-\tilde m,\imath}(\cC^\omega_b)}
\\
&\leq
\|w\|_{W^{l,p}_{s-\tilde m,\imath}(\cC^\omega_b)}.
\end{aligned}
\end{equation}
Now, we write  \eqref{continuityYBevolutedestimates} with $ q = r $ and $ p = \tilde p / (\tilde p - 1) $ to get
\begin{equation}\label{vvvcasegeneral} \parallel \cB_{\tilde \chi}[ w, \slw ] \parallel_{W^{l,q}_{s,\imath}  (\cC^{\omega}_{b})} \, \lesssim \, \parallel w \parallel_{W^{l,p}_{s-\tilde m,\imath} (\cC^\omega_b)} \, \parallel \slw \parallel_{W^{l,q}_{s,\imath} (\cC^{\omega}_b)} .
 \end{equation}
 When $\tilde m=0$, corresponding to the case $(\tilde p,\tilde m)\in L_0$, we simply choose in \eqref{vvvcasegeneral}
 \[ 
 q=p=\frac{\tilde p}{\tilde p-1}, 
 \] 
 and we obtain
\begin{equation}\label{vvvcasetildem=0}
\parallel \cB_{\tilde \chi}[ w, \slw ] \parallel_{W^{l,q}_{s,\imath}  (\cC^{\omega}_{b})} \, \lesssim \, \parallel w \parallel_{W^{l,q}_{s,\imath} (\cC^\omega_b)} \, \parallel \slw \parallel_{W^{l,q}_{s,\imath} (\cC^{\omega}_b)} .
\end{equation}
We now turn to the case $\tilde m>0$.  The case $\tilde p=1$, corresponding to $(\tilde p,\tilde m)\in L_1$, must be treated separately. Then, we can apply \eqref{continuityYBevolutedestimates} with $ p=q = r= +\infty $ together with \eqref{renot} where $ p = +\infty $ in order to get
\[\begin{aligned}
\parallel \cB_{\tilde \chi}[ w, \slw ] \parallel_{W^{l,\infty}_{s,\imath}  (\cC^{\omega}_{b})} 
&\lesssim \, 
\|w\|_{L^\infty_{l+s-\tilde s+\tilde\imath}(L^\infty)} \, \parallel \slw \parallel_{W^{l,\infty}_{s,\imath} (\cC^{\omega}_b)}
\\
&\lesssim \, \parallel w \parallel_{W^{l,\infty}_{s-\tilde m,\imath} (\cC^\omega_b)} \, \parallel \slw \parallel_{W^{l,\infty}_{s,\imath} (\cC^{\omega}_b)}
\\
&\lesssim \, 
\|w\|_{W^{l,\infty}_{s,\imath}(\cC^\omega_b)} \, \parallel \slw \parallel_{W^{l,\infty}_{s,\imath} (\cC^{\omega}_b)} .
\end{aligned} 
\]
When $\tilde m>0$ and $ \tilde p>1 $, so that $ p < +\infty $, the idea is to exploit the additional margin provided by $\tilde m$ in order to increase the integrability exponent $p$ appearing in \eqref{vvvcasegeneral}. As long as 
 \[ 
 0<p\tilde m=\frac{\tilde p}{\tilde p-1} \tilde m<d, 
 \] 
 which corresponds to the region $\mathfrak B$, the weighted embedding 
 \[ 
 \|w\|_{L^p_{s-\tilde m}(B)} \lesssim \|w\|_{L^q_s(B)} 
 \] 
 holds provided that \[ p\leq q<\frac{dp}{d-p\tilde m}. \] Indeed, the latter condition is equivalent to 
 \[ 
 \frac{d}{p}-\tilde m<\frac{d}{q}\leq\frac{d}{p}. 
 \] 
 The same argument, applied to the derivatives entering the anisotropic norms \eqref{defdenormcomegabimathaajouter}, yields 
 \[ 
 \|w\|_{W^{l,p}_{s-\tilde m,\imath}(\cC^\omega_b)} \lesssim \|w\|_{W^{l,q}_{s,\imath}(\cC^\omega_b)}, \qquad \text{ when } p\leq q<\frac{dp}{d-p\tilde m}. 
 \] 
 Combining this embedding with \eqref{vvvcasegeneral}, we recover the estimate \eqref{vvvcasetildem=0} in the $\mathfrak B$ region. Finally, when 
 \[ 
 d\leq p\tilde m, 
 \] 
 corresponding to the region $\mathfrak Q$, the upper restriction on $q$ disappears, and the same conclusion holds for every $q\in[p,+\infty]$. Consequently, for every $q$ satisfying the conditions prescribed in \eqref{determineq}, we obtain the bilinear estimate \eqref{vvvcasetildem=0}.

 Now, consider the Banach space $ E $ introduced below
\[ E:= L^\infty \bigl([0,T]; W^{l,q}_{s,\imath} (\cC^\omega_b)\bigr) \, , \qquad \parallel w \parallel_E \, := \sup_{t \in [0,T]} \ \parallel w (t,\cdot) \parallel_{W^{l,q}_{s,\imath} (\cC^\omega_b)}  . \]
With $ M $ as in (\ref{inidataboundMi}), we work on the complete metric space $ B_E (0,2M] $. We can interpret the expression
$ f_{\tilde \alpha_0} $ in \eqref{int.defdecLcQt} as a function of both variables $ k $ and $ \eta $. Then, the derivatives $ \part_\eta^\alpha f_{\tilde  \alpha_0} $ (with
$ 0 < \vert \alpha \vert $) as well as the semi-norm $ \vert f_{\tilde \alpha_0} \! \downharpoonright_\omega $ are simply zero. In view of
(\ref{defdenormcomegabdepare}) and (\ref{calculnormcontinuousconditionregular}), there remains
\begin{equation}\label{thereremains}
\parallel f_{\tilde \alpha_0} \parallel_{W^{l,q}_{s,\imath} (\cC^\omega_b)} = \parallel \omk^{l+\imath} \, f_{\tilde  \alpha_0} (k) \parallel_{L^q_{s}} =
\parallel \tilde \alpha_0 \parallel_{L^q_{l+s+\imath+(1/2)}} \leq M < + \infty .
\end{equation}
Coming back to the definition \eqref{int.defdecLcQt} of $ \cL_s $, by applying \eqref{vvvcasetildem=0} and then \eqref{thereremains}, we find that
\begin{equation}\label{l2totou1}
 \parallel \cL_s w \parallel_{W^{l,q}_{s,\imath} (\cC^\omega_b)} \lesssim \, M
\, \parallel w(s,\cdot) \parallel_{W^{l,q}_{s,\imath} (\cC^\omega_b)} .
\end{equation}
On the other hand, in view of
\eqref{ilfautajouter}, we have
\begin{equation}\label{l2totou2inter}
\parallel \cQ_s w \parallel_{W^{l,q}_{s,\imath} (\cC^\omega_b)} \lesssim \, \Bigl( \int_0^s \parallel g_w(r,s,\cdot) \parallel_{W^{l,q}_{s,\imath} (\cC^\omega_b)} \, dr \Bigr) \,
\parallel w(s,\cdot) \parallel_{W^{l,q}_{s,\imath} (\cC^\omega_b)} .
\end{equation}
Since $ g_w(r,s,\cdot) $ does not depend on $ \eta $, we can assert that
\[ \parallel g_w(r,s,\cdot) \parallel_{W^{l,q}_{s,\imath} (\cC^\omega_b)} = \parallel \omk^l \,  g_w(r,s,\cdot) \parallel_{L^q_{s+\imath}} = \,  \parallel w(r,\cdot) \parallel_{L^q_{l+s+\imath}} . \]
It follows that (for $ 0 \leq s \leq T $)
\begin{equation}\label{l2totou2}
\parallel \cQ_s w \parallel_{W^{l,q}_{s,\imath} (\cC^\omega_b)} \lesssim \, s  \ \parallel w \parallel_{E} \
\parallel w(s,\cdot) \parallel_{W^{l,q}_{s,\imath} (\cC^\omega_b)} .
\end{equation}
Now, consider the integral formulation (\ref{integralversionSKGS})-(\ref{integralversionSKGSPhi}), and especially the content of $ \Phi (w) $. Due to the transport part inside \eqref{KGB1intermediaire}, the map $ \Phi $ implies translations with respect to $ \eta $, by $ t \, k $ at the level of $ w_0 $ and by $ (t-s) \, k $ at the level of $ \cL_s w + \cQ_s w $. These two  operations have no effect on
$ \parallel w(t,k,\cdot) \parallel_{L^\infty} $ and $ \vert w(t,k,\cdot) \! \downharpoonright_\omega $. As long as $ 0 \leq t \leq T $, we obtain that
\[ \begin{array} {rl}
\displaystyle \parallel \Phi (w)(t,\cdot) \parallel_{W^{l,q}_{s,\imath} (\cC^\omega_b)} \! \! \! & \leq \, M \displaystyle + \int_0^t M
\parallel w(s,\cdot) \parallel_{W^{l,q}_{s,\imath} (\cC^\omega_b)} ds \\
\ & \displaystyle \qquad \ \, +  \parallel w \parallel_E \int_0^t s \, \parallel w(s,\cdot) \parallel_{W^{l,q}_{s,\imath} (\cC^\omega_b)}
ds \smallskip \\
\ & \leq \, M \ \bigl( 1 + 2 \, M \, T \, (1+T)\bigr) .
\end{array} \]
A bootstrap argument indicates that, for $ T $ small enough, namely such that $ 2 \, M \, T \, (1+T) < 1 $, the map $ \Phi $ sends $ B_E (0,2M] $ into itself. On the
other hand, a repetition of the above arguments indicates that, for some constant $ C \in \mathbb R_+ $, we have
\[ 
\parallel \Phi(w) - \Phi(\slw) \parallel_E \, \leq  C \, T \parallel w - \slw \parallel_E \, (M + \parallel w \parallel_E + \parallel \slw \parallel_E) \, , \qquad \forall (w,\slw) \in E^2 . 
\]
For $T$ sufficiently small, namely for
\[
T \leq \frac{1}{6CM},
\]
the map $\Phi$ is a contraction on $ B_E (0,2M] $. Hence, it admits a unique fixed point, which yields a solution to the integral formulation \eqref{integralversionSKGS}--\eqref{integralversionSKGSPhi} of the Fourier--Moyal equation \eqref{transporteqonw}--\eqref{inidatatransporteqonw}. By construction, this solution satisfies the a priori bound \eqref{boundMaftersolvingdeb}.

Moreover, since $\Phi$ is locally Lipschitz, an application of Gronwall's inequality yields unconditional uniqueness: the solution is unique in $E$, without the additional assumption of the a priori bound \eqref{boundMaftersolvingdeb}. Finally, one can define a maximal solution in the usual way, together with the corresponding blow-up criterion.
\end{proof}


\section{$L^P $-prepared solutions to the Fourier--Moyal equation} \label{sec:L2preparedsol}
For simplicity, throughout this section we restrict ourselves to the
three-dimensional case $d=3$.  Our objective is twofold: to construct weak
solutions to the $\mathrm{FM}$ equations directly, and to establish a precise
correspondence between these solutions and the associated weak solutions of the
$\mathrm{SKG}$ system. To this end, we fix $ \chi $, $ \tilde \alpha_0 $, $ u_0 $ and $ w_0 $. We assume that $ u_0 \in L^p $ and that $ w_0 = (2 \pi)^{-3} \, \cF \text{W}(u_0)  $. The choice of the exponent $p$ is dictated by the available SKG well-posedness
theory. On the one hand, one may take $p=2$, since the $L^2$ norm of solutions
to $\mathrm{SKG}$ is conserved \cite{MR519635}. On the other hand, taking into
account the dispersive estimates established in \cite{MR778979}, one may also
consider the case $p=4$.
 The issue is to:
\begin{itemize}
 \item solve globally in time the initial value problem built with $ \text{SKG}_{\chi} $ (resp. $ \text{FM}^{\tilde \alpha_0}_{\tilde \chi} $) and the initial data $ u_0 $ (resp. $ w_0 $) to obtain the existence of a solution $ u $ (resp. $ w $);
 \item show that $ w $ remains $ L^p $-prepared, in the sense\footnote{By Lemma \ref{preparedpropagation}, this property is satisfied for smooth rapidly decreasing  solutions. But it is not granted for weak solutions.} that
 \begin{equation}\label{wellprepatt}
  w(t,\cdot) = (2 \pi)^{-3} \, \cF \text{W} \bigl(u(t,\cdot)\bigr) \, , \qquad u(t,\cdot) \in L^p \, , \qquad \forall \, t \in \RR_+ \, ;
 \end{equation}
\item prove that adequate norms remain under control at the level of $ w $ and, from there, deduce the propagation of microlocal properties related to $ u $;
\item Exploit this information to recover the uniqueness of the weak solutions $ u $ and $ w $.
\end{itemize}

\smallskip

In preparation for the global well-posedness result for weak solutions, we introduce in Subsection \ref{subsec:preparatorywork} several preliminary tools. The properties of the correspondence  between $u(t,\cdot)$ and $w(t,\cdot)$ depends on the assumptions imposed on $\chi$, $\tilde \alpha_0$, and $u_0$.  We impose condition \eqref{voevenchi} on $\chi$, which ensures that $\tilde \chi \in L^\infty$. We further assume that $\tilde \alpha_0 \in L^\infty_{1/2}$ and define $\alpha_0$ accordingly by
\[
\alpha_0 = \chi \,\tilde \alpha_0 .
\]
Then, in Subsections \ref{subsec:propagationinfty} and \ref{subsec:propagationl2}, we successively consider the cases $\tilde \chi \in L^1_+$ with $u_0 \in L^2$, and $\tilde \chi \in L^2$ with $u_0 \in L^2 \cap L^4$.


\subsection{Preparatory work}\label{subsec:preparatorywork}
In Paragraph \ref{subsubsec:approximationscheme}, we propose an approximation scheme allowing to solve simultaneously the $ \text{SKG} $ and $ \text{FM} $ equations. This furnishes two sequences of smooth solutions $ (u^n)_n $ and $ (w^n)_n $ satisfying \eqref{wellprepatt}.
In Paragraph
\ref{subsubsec:2FWtransfrom},
we explain why the nonlinear constraint  \eqref{wellprepatt} is weakly continuous with respect to $ L^2 $.
In Paragraphs
\ref{subsubsec:structureofprepared data} and \ref{subsubsec:structureofprepared data4}, we describe the regularity and integrability properties inherited by $ w $ when $ w = (2 \pi)^{-3} \, \cF \text{W} (u) $ with  respectively $ u \in L^2 $ and $ u \in L^4 $.


\subsubsection{Approximation scheme}\label{subsubsec:approximationscheme} Let $ \psi $ be a smooth even compactly supported function on $ \RR $, which is a symmetric positive mollifier, meaning that
\[ 0 \leq \psi \leq 1 , \qquad
\exists \, \delta \in ]0,1[ \, ; \quad \psi_{\mid [-\delta,\delta]} \equiv 1 , \qquad \psi_{\mid [-1,1]^c} \equiv 0 \, , \qquad \int_\RR \psi(r) \, dr = 1 \, . \]
For $ n \in \NN^* $, we denote by $ \psi^n \in \cC^\infty_c (\RR^3) $ its rescaled $ L^1 $-version on $ \RR^3 $, which is given by
\begin{equation}\label{apprschemes}
 \psi^n (x) := \frac{n^3}{c} \, \psi \bigl( n \, \vert x \vert \bigr) \, , \qquad c := 4 \, \pi \, \int_0^{+\infty} \! \psi(r) \, r^2 \, dr  \, , \qquad \int_{\RR^3}  \psi^n(x) \, dx = 1 \, .
\end{equation}
We start with $ \chi $ as in \eqref{voevenchi} and with initial conditions $ \tilde \alpha_0 $ and $ u_0 $ adjusted such that
\begin{equation}\label{inidataboundMicutcut}
\exists M \in \RR_+ ; \qquad \parallel \chi \parallel_{L^\infty} \leq M \, , \qquad \parallel \tilde \alpha_0 \parallel_{L^\infty_{1/2}} \leq M \, , \qquad
\parallel u_0 \parallel_{L^2} \leq M \, .
\end{equation}
We can approximate $ \chi $, $ \tilde \alpha_0 $ and $ u_0 $ according to
\[ 
\chi^n (k) := \psi \Bigl( \frac{\vert k \vert}{n} \Bigr) \ \chi * \psi^n (k) \, , \quad \ \tilde \alpha_0^n (k) := \psi \Bigl( \frac{\vert k \vert}{n} \Bigr) \ \tilde \alpha_0 * \psi^n (k) \, , \quad \ u_0^n (x) := \psi \Bigl( \frac{\vert x \vert}{n} \Bigr) \ u_0 * \psi^n (x) \, . 
\]
By construction, we get that
\[ 
\chi^n (k) = \chi^n (-k) \, , \quad \chi^n \in \cC^\infty_c (\RR^3; \RR) \, , \quad \ \tilde \alpha^n_0 \in \cC^\infty_c (\RR^3; \CC) \, , \quad \ u^n_0 \in \cC^\infty_c (\RR^3; \CC) \, . 
\]
Define
\[ w^n_0 := (2 \pi)^{-3}\, \cF_{x,\xi} \circ \text{W} (u^n_0) \, , \qquad \alpha^n_0 := \chi^n \, \tilde \alpha^n_0 \, , \qquad  \tilde \chi^n (k) := \omk^{-1} \, \chi^n(k) \, . \]
For all $ n \in \NN^* $, we can retain the following bounds
\begin{equation}\label{inidataboundMicutcutreg} \
\parallel \chi^n \parallel_{L^\infty} \leq M \, , \quad \parallel \tilde \alpha_0^n \parallel_{L^\infty_{1/2}} \leq M \, , \quad \
\parallel u_0^n \parallel_{L^2} \leq M \, , \quad \
\parallel w_0^n \parallel_{L^2} \leq (2 \pi)^{d/2} \, M^2 .
\end{equation}
From $ \alpha^n_0 $, we can  deduce $ A^n_0 $ and $ A^n_1 $ as indicated in \eqref{recoveralpha0}. Then:
\begin{itemize}
  \item[$ \circ $] The $ \text{SKG}_{\chi^n} $-system \eqref{PSKG} with initial data $ (u_0^n,A_0^n,A_1^n) $ has a global smooth solution $ (u^n , A^n) $;
  \vskip 1mm
  \item[$ \circ $] The $ \text{FM}^{\tilde \alpha_0^n}_{\tilde \chi^n} $-system  \eqref{transporteqonw} with initial data $ w_0^n $ has a global smooth solution $ w^n $.
\end{itemize}

\noindent By Lemma \ref{preparedpropagation}, we can assert that
\begin{equation}\label{preparedpropagationattimetn}
\forall t \in \RR_+ \, , \qquad w^n (t,\cdot) = (2 \pi)^{-3} \, \cF_{x,\xi} \circ {\rm W} \bigl( u^n (t,\cdot) \bigr)  .
\end{equation}
Moreover, for all $ t \in \RR_+ $, we know that\footnote{See Lemma \ref{Propertiesofprepareddata}}
\begin{equation}\label{conservnn}
\qquad \forall \, n \in \NN_+ \, , \quad \ \parallel u^n (t,\cdot) \parallel_{L^2} \lesssim M \, , \quad \
\parallel w^n (t,\cdot) \parallel_{L^2} \lesssim M^2 \, , \quad \
\parallel w^n (t,\cdot) \parallel_{L^\infty} \lesssim M^2 .
\end{equation}


\subsubsection{Weak continuity on $ L^2 $ of the Fourier--Wigner transform}\label{subsubsec:2FWtransfrom} Let $ t \in \RR_+ $. The uniform bounds inside \eqref{conservnn} guarantee that, modulo the extraction of a subsequence (which is not specified), the two
sequences $ \bigl(u^n(t,\cdot) \bigr)_n $ and $ \bigl( w^n (t,\cdot) \bigr)_n $ converge weakly in $ L^2 $ to functions $ u (t,\cdot) $ and $ w (t,\cdot) $. A first step is to check that $ w (t,\cdot) $ is necessarily $ L^2 $-prepared.

\begin{lem} \label{lemmedeweakconti}
Let $ (u^n )_n $ be a sequence that converges  weakly in $ L^2 (\RR^3) $ to the function $ u $. Then, the sequence $ (w^n)_n $ with $ w^n = (2 \pi)^{-3} \, \cF_{x,\xi} \circ {\rm W} (u^n) $ converges weakly in $ L^2 (\RR^3 \times \RR^3) $ to $ w $ with:
\begin{equation}\label{apreslimite}
 u \in L^2 (\RR^3) , \qquad w \in L^2 (\RR^3 \times \RR^3) , \qquad w = (2 \pi)^{-3} \, \cF_{x,\xi} \circ {\rm W} (u) \, .
\end{equation}
\end{lem}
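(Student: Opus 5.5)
The plan is to reduce the claim to the compactness of an integral operator with a Schwartz kernel, testing weak convergence on a dense class. First I make the Fourier--Wigner transform explicit. In \eqref{defunknownw}, the $\xi$-integration produces $(2\pi)^3\,\delta(y+\eta)$, which gives, at least for $u\in\cS(\RR^3)$,
\[
(2\pi)^{-3}\,\cF_{x,\xi}\circ{\rm W}(u)(k,\eta)=\int_{\RR^3}e^{-ik\cdot x}\,u\Bigl(x-\frac{\eta}{2}\Bigr)\,\bar u\Bigl(x+\frac{\eta}{2}\Bigr)\,dx .
\]
This is an ambiguity-type function. The map $u\mapsto w$ extends from $\cS$ to $L^2$ as a continuous quadratic map, with
\[
\parallel w\parallel_{L^2}=(2\pi)^{3/2}\parallel u\parallel_{L^2}^2
\]
by Moyal's identity, as in \eqref{rein}. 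Consequently $w\in L^2(\RR^3\times\RR^3)$ as soon as $u\in L^2(\RR^3)$, which settles the first two claims in \eqref{apreslimite}.

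Next I show the convergence against test functions. Fix $\varphi\in\cS(\RR^3\times\RR^3)$ and set $\Phi(x,\eta):=\int e^{-ik\cdot x}\varphi(k,\eta)\,dk$, which again belongs to $\cS$. Applying Fubini and then the linear change of variables $a=x-\eta/2$, $b=x+\eta/2$ (Jacobian $1$), I obtain
\[
\int\!\!\int w^n\,\varphi\,dk\,d\eta=\int\!\!\int u^n(a)\,\overline{u^n(b)}\,K(a,b)\,da\,db=\bigl\langle T_K u^n,u^n\bigr\rangle_{L^2},
\]
where $K(a,b):=\Phi\bigl(\tfrac{a+b}{2},\,b-a\bigr)\in\cS(\RR^6)$ and $(T_Kf)(b):=\int K(a,b)f(a)\,da$. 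These manipulations are first justified for Schwartz $u^n$ and then extended by density, using the continuity of both sides in $L^2$.

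The main step is the passage to the limit in this quadratic expression. Weak convergence alone does not let one pass to the limit in a product, but $T_K$ has a kernel in $L^2(\RR^6)$, so it is Hilbert--Schmidt and hence compact. Therefore $T_Ku^n\to T_Ku$ strongly in $L^2$, while $u^n\rightharpoonup u$ weakly. A strongly convergent sequence paired with a weakly convergent one converges, so
\[
\langle T_Ku^n,u^n\rangle\to\langle T_Ku,u\rangle=\int\!\!\int w\,\varphi .
\]

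Finally I pass from test functions to general $L^2$ functions. A weakly convergent sequence is bounded, so $\sup_n\parallel u^n\parallel_{L^2}<\infty$, and hence $\sup_n\parallel w^n\parallel_{L^2}<\infty$ by the Moyal identity. Since $\cS$ is dense in $L^2(\RR^6)$, an $\varepsilon/3$ argument upgrades convergence against Schwartz test functions to weak convergence in $L^2(\RR^3\times\RR^3)$. The limit is identified as $(2\pi)^{-3}\cF_{x,\xi}\circ{\rm W}(u)$ without extracting any subsequence.
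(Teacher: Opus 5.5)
Your proof is correct and follows essentially the paper's route. Both arguments rewrite the pairing $\int\!\!\int w^n\varphi$ as the quadratic form of the integral operator with Schwartz kernel $(\cF_k\varphi)\bigl(\tfrac{a+b}{2},\,b-a\bigr)$. The paper approximates this kernel on compacts by finite sums of tensor products, which is a hand-made finite-rank approximation, and then exhausts $\RR^3$. You instead invoke directly the compactness of the Hilbert--Schmidt operator $T_K$, which is cleaner. You also spell out the boundedness-plus-density step that upgrades convergence against Schwartz test functions to weak $L^2$ convergence of the whole sequence, which the paper leaves implicit.
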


\begin{proof} The starting point is \eqref{autreformulapourwbis}, which guarantees that
\[ w^n (k,\eta) = \int_{\RR^3} e^{-i k \cdot x} \, u^n \Bigl( x - \frac{\eta}{2} \Bigr) \, \bar u^n \Bigl( x + \frac{\eta}{2} \Bigr) \, dx . \]
The weak limit of a product (as above) is not necessarily the product of the weak limits. The last relation inside (\ref{apreslimite})
cannot be directly deduced without exploiting the special (tensor) structure appearing in the integrand. Let $ \varphi \in \cS (\RR^3 \times \RR^3)  $. We can test $ w_n $ against
$ \varphi $ to obtain
\[ \int_{\RR^3} \! \int_{\RR^3} w^n (k,\eta)  \, \varphi (k,\eta) \, dk d\eta = \int_{\RR^3} \! \int_{\RR^3}  (\cF_k \varphi) \bigl( (y+z)/2,z-y \bigr) \,
u^n (y) \, \bar u^n (z) \, dy dz . \]
Since $ \cF_k \varphi $ is rapidly decreasing, modulo some arbitrarily small error term, we can work on a compact set $ K $, on which
the function $ \cF_k \varphi $ may be approximated in the sup norm by a sum of tensor products according to
\[ (\cF_k \varphi) \bigl( (y+z)/2,z-y \bigr) = \sum_{j=0}^{+\infty} \psi_j^1 (y) \, \psi_j^2 (z) , \qquad (\psi_j^1, \psi_j^2) \in \cC^0 (K)^2 . \]
By Fubini's theorem, for all $ J \in \NN $, we have
\[ \begin{array}{rl}
\displaystyle \lim_{n \rightarrow +\infty} \  \sum_{j=0}^J  \int_{\RR^3} \! \int_{\RR^3} \psi_j^1 (y) \, \psi_j^2 (z) \, u^n (y) \, u^n (z) \, dy dz \! \! \! &
\displaystyle = \lim_{n \rightarrow +\infty} \ \sum_{j=0}^J \Bigl( \prod_{l=1}^2 \int_{\RR^3} \psi_j^l (y) \, u^n (y) \, dy \Bigr) \\
\ & \displaystyle = \sum_{j=0}^J  \int_{\RR^3} \! \int_{\RR^3} \psi_j^1 (y) \, \psi_j^2 (z) \, u (y) \, u (z) \, dy dz \, .
\end{array} \]
Passing to the limit with an exhaustion of $ \RR^3 $ by compact sets  and by letting $ J $ goes to $ + \infty $, this yield
\renewcommand\arraystretch{2}
\[ \begin{array}{rl}
\displaystyle \lim_{n \rightarrow +\infty} \ \int_{\RR^3} \! \int_{\RR^3} \! \! \! & \displaystyle w^n (k,\eta)  \, \varphi (k,\eta) \, dk d\eta =
\int_{\RR^3} \! \int_{\RR^3} w (k,\eta)  \, \varphi (k,\eta) \, dk d\eta \\
\ & \displaystyle = \int_{\RR^3} \! \int_{\RR^3} \Bigl( \int_{\RR^3} e^{-i k \cdot x} \, u \Bigl( x - \frac{\eta}{2} \Bigr) \, \bar u
\Bigl( x + \frac{\eta}{2} \Bigr) \, dx \Bigr) \, \varphi (k,\eta) \, dk d\eta .
\end{array} \]
\renewcommand\arraystretch{1}

\noindent This means that $ w $ must coincide with $ (2 \pi)^{-3} \, \cF_{x,\xi} \circ {\rm W} (u) $ in $ \cS' (\RR^3 \times \RR^3) $. The last identity
inside (\ref{apreslimite}) is therefore also true in the sense of $ L^2 $-functions.
  \end{proof}


\subsubsection{Structure of $ L^2 $-prepared data}\label{subsubsec:structureofprepared data} 
In view of \eqref{apreslimite}, one might be led to believe that the $L^2$ condition on $w$ is the fundamental requirement. However, an $L^2$ bound alone is far from sufficient to characterize functions of the form
\[
w=(2\pi)^{-3}\,\cF\mathrm{W}(u), \qquad u\in L^2.
\]
In fact, the preparation condition \eqref{apreslimite} imposes several additional constraints on $w$, some of which are listed below.

\begin{lem}[Properties of $ L^2 $-prepared data] \label{Propertiesofprepareddata}   Assume that $ w = (2 \pi)^{-3} \, \cF {\rm W} (u) $
 with $ u \in L^2 (\RR^3) $. Then:
\begin{enumerate}
\item [(a)] The function $ w $ is in $ L^2 (\RR^3 \times \RR^3) $ with $ \parallel w \parallel_{L^2} \lesssim \parallel u
\parallel_{L^2}^2 $.
\item [(b)] The function $ w $ is in $ L^\infty (\RR^3 \times \RR^3) $ with $ \parallel w \parallel_{L^\infty} \leq \parallel u
\parallel_{L^2}^2 $.
\item [(c)] For all $ \eta \in \RR^3 $, the function $ w(\cdot , \eta) $ is in the Wiener algebra $ \cF (L^1) $.
\item [(d)] The function $ w $ is uniformly continuous on $ \RR^3 \times \RR^3 $, and it vanishes at infinity.
\item [(e)] There exists a concave bounded modulus of continuity $ \omega $ which is adjusted in such a way that $ w $ is in $ L^\infty (\cC^\omega_b) $ with moreover $ \parallel w \parallel_{L^\infty(\cC^\omega_b)} \lesssim \parallel u
\parallel_{L^2}^2 $.
\end{enumerate}
\end{lem}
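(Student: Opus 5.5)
The whole proof rests on the explicit representation recalled in the proof of Lemma~\ref{lemmedeweakconti}, coming from \eqref{autreformulapourwbis}:
\[
w(k,\eta)=\int_{\RR^3} e^{-ik\cdot x}\,u\Bigl(x-\frac{\eta}{2}\Bigr)\,\bar u\Bigl(x+\frac{\eta}{2}\Bigr)\,dx .
\]
So, for fixed $\eta$, the function $w(\cdot,\eta)$ is the Fourier transform in $x$ of $F_\eta(x):=u(x-\eta/2)\bar u(x+\eta/2)$. By Cauchy--Schwarz, $F_\eta\in L^1$ with $\|F_\eta\|_{L^1}\le \|u\|_{L^2}^2$. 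I would argue directly for $u\in L^2$, but could also establish each estimate for $u\in\cS$ and extend by density, using that $u\mapsto w$ is a bounded sesquilinear map.

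Items (a) to (c) follow from this representation.
\begin{itemize}
\item[(c)] Since $w(\cdot,\eta)=\cF_x F_\eta$ with $F_\eta\in L^1$, it lies in the Wiener algebra $\cF(L^1)$.
\item[(b)] Taking the supremum in $k$ gives $|w(k,\eta)|\le\|F_\eta\|_{L^1}\le\|u\|_{L^2}^2$.
\item[(a)] Plancherel in $k$ gives $\int|w(k,\eta)|^2dk=(2\pi)^3\int|u(x-\eta/2)|^2|u(x+\eta/2)|^2dx$. Integrating in $\eta$ and changing variables $(x,\eta)\mapsto(y,z)=(x-\eta/2,x+\eta/2)$, which has unit Jacobian, gives $\|w\|_{L^2}^2=(2\pi)^3\|u\|_{L^2}^4$. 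This is consistent with \eqref{rein} and Moyal's identity \eqref{Moyals}.
\end{itemize}

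For (d), I would prove uniform continuity through a difference estimate. Write $\tau_h u:=u(\cdot-h)$. Bilinearity and Cauchy--Schwarz give
\[
|w(k+\kappa,\eta+h)-w(k,\eta)|\le \|u\|_{L^2}\Bigl(\|\tau_{h/2}u-u\|_{L^2}+\|\tau_{-h/2}u-u\|_{L^2}\Bigr)+\int|e^{-i\kappa\cdot x}-1|\,|F_{\eta}(x)|\,dx .
\]
The first term tends to $0$ with $h$, uniformly in $(k,\eta)$, by continuity of translations in $L^2$.

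The second term needs more care, because it must be small uniformly in $\eta$. For this I would approximate $u$ in $L^2$ by $u_R$, a function compactly supported in a ball $B_R$; the error this introduces in $w$ is uniformly small by (b) and sesquilinearity. For $u_R$, the product $F_\eta$ is supported in $|x|\le R$, so $|e^{-i\kappa\cdot x}-1|\le R|\kappa|$ there and the integral is at most $R|\kappa|\,\|u\|_{L^2}^2$.

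Vanishing at infinity would come from the same approximation, now by $u_R\in\cC^\infty_c$. For such $u_R$, $w$ vanishes once $|\eta|>2R$, and it decays in $k$ uniformly on $|\eta|\le 2R$ because $F_\eta$ is then smooth with uniformly bounded derivatives (integration by parts). The approximation error is uniformly small by (b), which concludes (d).

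For (e), I would set
\[
\omega_0(z):=\sup\bigl\{|w(k,\eta+h)-w(k,\eta)|\,;\,(k,\eta)\in\RR^3\times\RR^3,\ |h|\le z\bigr\}.
\]
This function is bounded by $2\|u\|_{L^2}^2$, nondecreasing, and tends to $0$ at $0$ by (d). Its least concave majorant $\omega$ is again bounded and vanishes continuously at $0$; the point to check is that a bounded, uniformly continuous modulus admits such a concave majorant with $\omega(0^+)=0$, which is standard. The gain in $\eta$ is scale-free: $\omega_0$ depends on $u$ only through the function $h\mapsto\|\tau_h u-u\|_{L^2}$. I would normalise $\omega\ge\omega_0/\|u\|_{L^2}^2$ (for instance by building $\omega$ from $\sup_{|h|\le z}\|\tau_hu-u\|_{L^2}/\|u\|_{L^2}$), which yields $\|w\|_{L^\infty(\cC^\omega_b)}\lesssim\|u\|_{L^2}^2$. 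If desired, $\omega$ can then be enlarged as in Remark~\ref{remtildeimath} to satisfy \eqref{modconw}.

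The main obstacle is (e), more precisely the uniformity in $k$. This is exactly why the seminorm must be controlled through the $\eta$-translation estimate of the first term in the display above, which is uniform in $k$, rather than through any pointwise $k$-dependent bound. I also have to accept that $\omega$ depends on $u$ and not only on its norm.
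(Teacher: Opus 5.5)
Your proposal is correct and follows essentially the same route as the paper. The common ingredients are the representation \eqref{autreformulapourwbis} for (b)--(c), the $\eta$-translation bound \eqref{hencewehave}, which is uniform in $k$, density of compactly supported functions for the decay at infinity, and the concave hull of the normalized modulus $\sup_{|h|\le z}\|u(\cdot+h)-u\|_{L^2}/\|u\|_{L^2}$ for (e). The only minor differences are that you prove (a) directly by Plancherel and a unimodular change of variables rather than citing Moyal's identity, and that you get uniform continuity in (d) from a joint difference estimate (truncating $u$ to control the $k$-increment uniformly in $\eta$), whereas the paper deduces it from joint continuity plus vanishing at infinity.
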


\begin{proof} Since $ w = (2 \pi)^{-3} \, \cF W (u,u) $, the point {\it (a)} is a direct consequence of \eqref{crosswignermoyalidentityb}. From \eqref{autreformulapourwbis}, by H\"older's inequality, we can easily deduce {\it (b)} and {\it (c)}. Let $ h \in \mathbb R^3 $. Observe that
\[ \begin{array}{rl}
\vert w(k , \eta + 2 h) - w(k , \eta)  \vert \leq \! \! \! &  \displaystyle \int_{\RR^3} \vert u ( x - h) \, \bar u ( x + \eta+h) - u ( x) \, \bar u ( x + \eta) \vert \ dx  \smallskip \\
\leq \! \! \! & \displaystyle \int_{\RR^3} \vert u ( x - h) - u ( x) \vert \, \vert \bar u ( x + \eta +h) \vert \ dx \smallskip \\
& \displaystyle + \int_{\RR^3} \vert u ( x) \vert \, \vert \bar u ( x + \eta +h) - \bar u ( x + \eta ) \vert \ dx \, .
\end{array}
\]
Hence, we have
\begin{equation}\label{hencewehave}
\quad \vert w(k , \eta + 2 h) - w(k , \eta)  \vert \leq \parallel u \parallel_{L^2} \, \bigl( \parallel u ( \cdot - h) - u (\cdot) \parallel_{L^2} + \parallel u ( \cdot + h) - u (\cdot) \parallel_{L^2} \bigr) \, .
\end{equation}
The right-hand side goes to $ 0 $ when $ \vert h \vert $ tends to $ 0 $, showing the continuity of $ w(k,\cdot) $.
Due to the inclusion $ \cF (L^1) \hookrightarrow \cC^0 $, the function $ w(\cdot,\eta) $ is also continuous. Now, when $ u $ is smooth (say of class $ \cC^\infty $) with support inside the ball $ B(0,R] $, the support of $ w $ is in the
cylinder $ \RR^3 \times B(0,2R] $. On the other hand, it is easy to show (by integration by parts) that $ w $ is (uniformly in $ \eta $)
rapidly decreasing when $ \vert k \vert $ goes to infinity. For $u \in L^2(\RR^3)$, a standard density argument shows that $w$ still vanishes at infinity. Since $w$ is continuous, it follows that $w$ is uniformly continuous on the whole phase space $\RR^3 \times \RR^3$. This proves {\it (d)}.

\smallskip

\noindent Statement (e) provides more quantitative information. If $u \equiv 0$, then $w \equiv 0$, and there is nothing to prove. We may therefore assume that $u \not\equiv 0$. Define the function
\[ \mathbb R_+ \ni s \mapsto \tilde \omega (s) := \frac{1}{\parallel u \parallel_{L^2}} \ \sup_{\vert h \vert \leq s} \ \parallel u ( \cdot + h) - u (\cdot) \parallel_{L^2} \, , \qquad \lim_{s \rightarrow + \infty } \ \tilde \omega (s) := 2 \, , \]
and consider the map
\begin{equation}\label{translatioonmap}
\tau_u : \mathbb R^3 \rightarrow L^2(\mathbb R^3) \, , \qquad \tau_u(h) := u(\cdot-h) \, .
\end{equation}
We can apply  \eqref{defdenormcomegabdepare} with $ B \equiv L^2 (\RR^3) $. Since the $ L^2 $-norm is invariant by translation and due to the definition of $ \tilde \omega $, we find that
\begin{equation}\label{normtrinvariant} \qquad \vert \tau_u \! \downharpoonright_{\tilde \omega} \, = \sup_{\eta \in \mathbb R^3} \ \sup_{0 < \vert h \vert} \frac{\parallel \tau_u(\eta+h)- \tau_u (\eta) \parallel_{L^2}}{\tilde \omega (\vert h \vert)} = \sup_{0 < \vert h \vert} \frac{\parallel u(\cdot -h)- u (\cdot) \parallel_{L^2}}{\tilde \omega (\vert h \vert)} \leq \parallel u \parallel_{L^2} .
\end{equation}
By construction, the function $ \tilde \omega $ is non-zero, increasing on $ \mathbb R_+ $, and such that $ \tilde \omega (0) = 0 $. It can be replaced by its concave hull $ \omega $ which can serve as a modulus of continuity. Since
$ \tilde \omega \leq \omega $ and because $ \tilde \omega (\vert h \vert/2) \leq \tilde \omega (\vert h \vert ) $, we have
\[ \vert w(k,\cdot) \! \downharpoonright_{\omega} \, \leq \vert w(k,\cdot) \! \downharpoonright_{\tilde \omega} \leq \sup_{\eta \in \mathbb R^3} \ \sup_{0 < \vert h \vert} \frac{ \vert w(k,\eta+h)-w(k,\eta) \vert}{\tilde \omega (\vert h \vert/2)} \, .  \]
Exploiting  \eqref{hencewehave} together with \eqref{normtrinvariant}, it follows that
\[ \vert w(k,\cdot) \! \downharpoonright_{\omega} \, \leq 2 \ \parallel u \parallel_{L^2} \ \sup_{0 < \vert h \vert} \frac{\parallel u(\cdot -h/2)- u (\cdot) \parallel_{L^2}}{\tilde \omega (\vert h \vert/2)} = 2 \ \parallel u \parallel_{L^2} \ \vert \tau_u \! \downharpoonright_{\tilde \omega} \leq 2 \ \parallel u \parallel_{L^2}^2 . \]
With {\it (b)}, this furnishes
\begin{equation}\label{expliinfcre}
 \parallel w \parallel_{L^\infty(\cC^\omega_b)} 
\leq 2 \, \parallel u \parallel^2_{L^2} \, < + \infty \, .
\end{equation}
By this way, we recover (e).
\end{proof}

\noindent It is worth noting that the preceding construction of $ \omega $  depends on $ u $. Indeed, given $ u \in L^2(\mathbb R^3) $, the definition of $ \tilde \omega $ (and therefore of $ \omega $) is based  on $ u $. Now, given some $ \omega $, it is interesting to investigate the meaning in terms of $ u $ of
$ \vert w(k,\cdot) \! \downharpoonright_{\omega} $. Clearly, the control of $ \vert w(k,\cdot) \! \downharpoonright_{\omega} $ provides some  information on $ u $ that is not only inherited from the $ L^2 $-bound on $ u $. This is confirmed by the remark below.

\begin{rem}[Some regularity and concentration properties detected by the semi-norm $ \vert \, \cdot \! \downharpoonright_{\omega} $] \label{detectseminorm}
Recall that
\begin{equation}\label{transrot}
\forall \, (z,\zeta) \in \RR^3 \times \RR^3 \, , \qquad \cF {\rm W} \bigl( e^{i \zeta \cdot x} u(x-z) \bigr) = e^{-i \zeta \cdot \eta -i k \cdot z} \,
\cF {\rm W} (u) (k,\eta) \, .
\end{equation}
  The above operation does not modify the $ L^2 $-norm of $ u $. But, given $ \omega $,  the modulation operator can increase the size of $ \vert w \! \downharpoonright_{\omega} $, especially when $ \vert \zeta \vert $ is large. This indicates that the quantity $ \vert w \! \downharpoonright_{\omega} $ measures the regularity of $ u $. The expression $ \vert w \! \downharpoonright_{\omega} $ also detects the presence of concentration. Indeed, select a function $ u $ whose $ L^2 $-norm is one, and consider its $ L^2 $-rescaled version
 $$ u_\eps (x) := \frac{1}{\eps^{d/2}} \ u \Bigl( \frac{x}{\eps}\Bigr) \, , \qquad w_\eps (k,\eta) := \cF \, {\rm W} (u_\eps) (k,\eta) \, , \qquad w \equiv w_1 \, , \qquad \eps \in ]0,1] \, . $$
Just take $ k = 0 $. Since $ \vert w(0,\cdot) \! \downharpoonright_{\omega} \not = 0 $, we can find $ (\eta_1,\eta_2) \in \RR^3 \times \RR^3 $ such that
\[ \begin{array}{rl}
\displaystyle \vert w_\eps (0, \eps \, \eta_2) - w_\eps(0,\eps \, \eta_1) \vert \! \! \! & \displaystyle = \vert  \int_{\RR^3}
u_\eps ( x) \ \bigl \lbrack u_\eps ( x + \eps \, \eta_2) - u_\eps ( x + \eps \, \eta_1) \bigr \rbrack \ dx \, \vert \smallskip \\
\ & \displaystyle =
 \vert w(0,\eta_2) - w(0,\eta_1) \vert \not = 0 \, .
 \end{array}
\]
It follows that
\[ \vert w_\eps (0,\cdot) \! \downharpoonright_{\omega} \,  \geq \frac{\vert w_\eps (0,\eps \, \eta_2) - w_\eps (0,\eps \, \eta_1) \vert}{\omega \bigl( \eps \, \vert \eta_2 -\eta_1 \vert \bigr)} \geq \frac{\vert w (0,\eta_2) - w (0, \eta_1) \vert}{\omega \bigl( \eps \, \vert \eta_2 -\eta_1 \vert \bigr)} \, , \]
showing that $ \vert w_\eps (0,\cdot) \! \downharpoonright_{\omega} $ becomes as large as wanted when $ \eps \in \mathbb R_+^* $ goes to zero. Looking at $ \vert w \! \downharpoonright_{\omega} $ is therefore also a way to control how $ u $ can  concentrate in space.
\end{rem}

\noindent The first global solutions to the Schr\"odinger--Klein--Gordon system
(that is, $\mathrm{SKG}_{\indicf}$) were constructed by Baillon and Chadam
\cite{MR519635,MR515899} in the functional setting $ \cC^0 \bigl(\mathbb R; H^2(\mathbb R^3)) $. Motivated by this, we describe below how the $H^2$-regularity of $u(t,\cdot)$ translates into properties of $w(t,\cdot)$. More generally, we characterize, at the level of $w(t,\cdot)$, the counterpart of the $H^l$-regularity of $u(t,\cdot)$.

\begin{lem}[Properties of  prepared data coming from higher levels of Sobolev regularity] \label{Propertiesofhigherprepareddata}   Assume that $ w = (2 \pi)^{-3} \, \cF_{x,\xi} \circ {\rm W} (u) $
 with $ u \in H^l (\RR^3) $ and $ l \in \NN^* $. Then, we can find a concave modulus of
continuity $ \omega $ such that $ w \in W^{l,\infty}_{0,0} (\cC^\omega_b) $.
\end{lem}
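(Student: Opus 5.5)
The plan is to reduce the statement to the $L^2$ case, Lemma~\ref{Propertiesofprepareddata}~(e), applied to derivatives of $u$. By definition \eqref{calculnormcontinuousconditionregular} with $p=\infty$, $s=\imath=0$, we must bound $\sup_k \langle k\rangle^{l-|\alpha|}\,\|\partial_\eta^\alpha w(k,\cdot)\|_{\cC^\omega_b}$ for $|\alpha|\le l$. Since $\langle k\rangle^{l-|\alpha|}\lesssim \sum_{|\gamma|\le l-|\alpha|}|k^\gamma|$, it suffices to control $k^\gamma\partial_\eta^\alpha w$ in $L^\infty(\cC^\omega_b)$ for all multi-indices with $|\gamma|+|\alpha|\le l$.

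The first step is to rewrite each such quantity as a bilinear Fourier--Wigner expression. Starting from \eqref{autreformulapourwbis},
\[
w(k,\eta)=\int_{\RR^3} e^{-ik\cdot x}\,u\Bigl(x-\frac{\eta}{2}\Bigr)\,\bar u\Bigl(x+\frac{\eta}{2}\Bigr)\,dx ,
\]
differentiation in $\eta$ produces, by Leibniz, sums of terms of the form $\int e^{-ik\cdot x}\,\partial^{\beta}u(x-\eta/2)\,\overline{\partial^{\alpha-\beta}u}(x+\eta/2)\,dx$. Multiplication by $k^\gamma$ becomes $(i\partial_x)^\gamma$ acting on the product by integration by parts, and Leibniz again distributes it over the two factors. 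Altogether,
\[
k^\gamma\partial_\eta^\alpha w=\sum c_{\beta,\beta'}\,\cF\,\mathbb W\bigl(\partial^\beta u,\partial^{\beta'}u\bigr)(k,\eta),\qquad |\beta|+|\beta'|\le l ,
\]
with cross Fourier--Wigner transforms. This identity is justified first for $u\in\cS$ and then extended by density in $H^l$, since every term is continuous in $H^l$ by the estimates below.

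The second step repeats the proof of Lemma~\ref{Propertiesofprepareddata}~(b),(e) for the cross terms $\int e^{-ik\cdot x}f(x-\eta/2)\bar g(x+\eta/2)\,dx$ with $f,g\in\{\partial^\beta u:|\beta|\le l\}\subset L^2$. The sup bound $\le\|f\|_{L^2}\|g\|_{L^2}$ follows from Cauchy--Schwarz. The analogue of \eqref{hencewehave} gives
\[
\bigl|\cdot(k,\eta+2h)-\cdot(k,\eta)\bigr|\le\|g\|_{L^2}\,\|f(\cdot-h)-f\|_{L^2}+\|f\|_{L^2}\,\|g(\cdot+h)-g\|_{L^2}.
\]

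The key choice, which I expect to be the only delicate point, is a single modulus working for all the finitely many functions $\partial^\beta u$ at once. Set
\[
\tilde\omega(r):=\sum_{|\beta|\le l}\ \sup_{|h|\le r}\|\partial^\beta u(\cdot+h)-\partial^\beta u\|_{L^2}.
\]
This function is increasing, bounded, and tends to $0$ at $0$ by continuity of translations in $L^2$. Take $\omega$ to be its concave hull. It is finite because $\tilde\omega$ is bounded and vanishes at the origin, and it still tends to $0$ at $0$ because $\tilde\omega$ is bounded and continuous there, so $\tilde\omega(r)\le\epsilon+Cr/\delta$. If needed, $r^{\tilde\imath}$ is added as in Remark~\ref{remtildeimath}. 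With $\tilde\omega(|h|/2)\le\omega(|h|)$, the displayed difference bound yields
\[
|\cdot(k,\cdot)\!\downharpoonright_\omega\ \le 2\,\|u\|_{H^l}.
\]
This bound is uniform in $k$, because the phase $e^{-ik\cdot x}$ plays no role. Summing over the finitely many terms gives $\|w\|_{W^{l,\infty}_{0,0}(\cC^\omega_b)}\lesssim\|u\|_{H^l}^2<\infty$, which proves the statement.
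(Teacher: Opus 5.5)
Your proposal is correct and follows essentially the same route as the paper: Leibniz in $\eta$, integration by parts to move the $k$-weight onto the product so that at most $l$ derivatives fall on $u$ and $\bar u$, and then the argument of Lemma~\ref{Propertiesofprepareddata}~(e) with a concave hull of the translation moduli of all $\partial^\beta u$, $|\beta|\le l$. Two minor remarks. First, replacing $\langle k\rangle^{l-|\alpha|}$ by the monomials $k^\gamma$ is a cleaner version of the step the paper writes with $(1-\Delta_x)^{(l-|\alpha|)/2}$, which is nonlocal when $l-|\alpha|$ is odd. Second, with your unnormalized $\tilde\omega$ the seminorm bound is linear in $\|u\|_{H^l}$, so your final estimate is really $\lesssim \|u\|_{H^l}+\|u\|_{H^l}^2$; the quadratic bound requires normalizing $\tilde\omega$ by $\|u\|_{H^l}$, as the paper does with $\|u\|_{L^2}$, although this does not affect the membership claim.
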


\begin{proof}
\noindent In view of the definition (\ref{calculnormcontinuousconditionregular}),
for $ \vert \alpha \vert \leq l $, using Leibniz rule, we have to compute
\[ \begin{array}{rl}
\displaystyle \omk^{l-\vert \alpha \vert} \, \part^\alpha_\eta w (k,\eta) = \frac{1}{2^{\vert \alpha \vert}} \sum_{\beta \leq \alpha} (-1)^{\vert \beta \vert}
\left( \begin{array}{c}
\alpha \\
\beta
\end{array} \right) \int_{\RR^3} \! \! \! & (1-\Delta_x)^{(l-\vert \alpha \vert)/2} \bigl( e^{-i k \cdot x} \bigr) \\
\ & \displaystyle \times \, (\part^\beta_x u) \Bigl( x - \frac{\eta}{2} \Bigr) \, (\part^{\alpha-\beta}_x \bar u) \Bigl( x + \frac{\eta}{2} \Bigr) \, dx .
\end{array} \]
Integration by parts allow to transfer the $ l-\vert \alpha \vert $ derivatives of the oscillatory factor $ e^{-i k \cdot x} $ on the right hand
side, leading to at most $ l $ spatial derivatives distributed between $ u $ and $ \bar u $. By this way, we recover the discussion leading
to Lemma \ref{Propertiesofprepareddata}-$ (e) $. By implementing the concave hull $ \omega $ of the moduli of continuity associated to the derivatives (up to the order $ l $) of $ u $, we obtain that  $ w \in
W^{l,\infty}_{0,0} (\cC^\omega_b) $.  Note that the $ \omega $ of Lemma \ref{Propertiesofhigherprepareddata} may be different from the one of Lemma \ref{Propertiesofprepareddata}.
\end{proof}


\subsubsection{Structure of $ (L^2 \cap L^4) $-prepared data}\label{subsubsec:structureofprepared data4} When $ w $ is as in \eqref{apreslimite} with moreover $ u \in L^4 (\RR^3) $,
some additional properties become available.

\begin{lem}[Properties of $ (L^2 \cap L^4) $-prepared data] \label{Propertiesofprepareddata24} Fix any $ u \in (L^2 \cap L^4) (\RR^3) $, and assume that $ w = (2 \pi)^{-3} \, \cF {\rm W} (u) $. Then, in addition to (a), $ \cdots $, (e), we can assert that:
\begin{enumerate}
\item [(f)] There exists a concave bounded modulus of continuity $ \omega $ which is adjusted in such a way that $ w $ is in $ \cC^\omega_{b,\eta}(L^2_k) $ with moreover $ \parallel w \parallel_{\cC^\omega_{b,\eta}(L^2_k)} \lesssim \parallel u
\parallel_{L^4}^2 $.
\end{enumerate}
\end{lem}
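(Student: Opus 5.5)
The plan is to use the representation \eqref{autreformulapourwbis},
\[
w(k,\eta)=\int_{\RR^3} e^{-ik\cdot x}\,u\Bigl(x-\frac{\eta}{2}\Bigr)\,\bar u\Bigl(x+\frac{\eta}{2}\Bigr)\,dx ,
\]
which says that, for fixed $\eta$, $w(\cdot,\eta)$ is the Fourier transform in $x$ of the product $F_\eta(x):=u(x-\eta/2)\,\bar u(x+\eta/2)$. Plancherel in $k$ gives $\|w(\cdot,\eta)\|_{L^2_k}=(2\pi)^{3/2}\|F_\eta\|_{L^2_x}$. The Cauchy--Schwarz inequality (H\"older with exponents $4,4$) then yields
\[
\|F_\eta\|_{L^2}\le \|u(\cdot-\eta/2)\|_{L^4}\,\|u(\cdot+\eta/2)\|_{L^4}=\|u\|_{L^4}^2 .
\]
This gives the sup part $\sup_\eta\|w(\cdot,\eta)\|_{L^2_k}\lesssim\|u\|_{L^4}^2$ of the $\cC^\omega_{b,\eta}(L^2_k)$ norm, uniformly in $\eta$ and by translation invariance. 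Strong continuity of $\eta\mapsto w(\cdot,\eta)\in L^2_k$ will follow from the difference estimate below.

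For the seminorm I will mimic the proof of Lemma \ref{Propertiesofprepareddata}-(e), with $L^2$ replaced by $L^4$. Writing $\eta+2h$ versus $\eta$ and splitting the difference of products as in the derivation of \eqref{hencewehave}, Plancherel and H\"older give
\[
\|w(\cdot,\eta+2h)-w(\cdot,\eta)\|_{L^2_k}\lesssim \|u\|_{L^4}\bigl(\|u(\cdot-h)-u\|_{L^4}+\|u(\cdot+h)-u\|_{L^4}\bigr).
\]
I then define the $L^4$ modulus
\[
\tilde\omega(s):=\|u\|_{L^4}^{-1}\sup_{|h|\le s}\|u(\cdot+h)-u\|_{L^4},
\]
for $u\not\equiv0$, capped at the value $2$. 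This function is increasing, bounded by $2$, and tends to $0$ at $0$, because translation is continuous in $L^4$ ($4<\infty$). I take $\omega$ to be its concave hull, which is still bounded, vanishes at $0$ and satisfies $\omega\ge\tilde\omega$. Using $\tilde\omega(|h|/2)\le\tilde\omega(|h|)\le\omega(|h|)$ exactly as before, one obtains $\vert w\!\downharpoonright_\omega\le 2\|u\|_{L^4}^2$ with $B=L^2_k$. Combined with the sup bound, this gives the claimed estimate.

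The main obstacle is to justify that the concave hull of $\tilde\omega$ still tends to $0$ at the origin. This holds because $\tilde\omega$ is bounded and $\tilde\omega(s)\to0$: a bounded nondecreasing function vanishing continuously at $0$ admits a concave majorant vanishing at $0$, namely $\inf_{\eps}(\eps+C s/\delta_\eps)$ where $\tilde\omega\le \eps$ on $[0,\delta_\eps]$. If a single modulus is required for both (e) and (f), one takes the concave hull of the maximum of the $L^2$ and $L^4$ moduli, which is harmless.
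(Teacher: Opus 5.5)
Your proposal is correct and follows the paper's proof essentially step for step: Plancherel in $k$ plus H\"older $L^4\times L^4$ for the uniform $L^2_k$ bound, the split difference estimate \eqref{hencewehavebis}, and the concave hull of the $L^4$ translation modulus $\check\omega$. Your justification that this hull still vanishes at $0$, and your remark on strong continuity of $\eta\mapsto w(\cdot,\eta)$ in $L^2_k$, fill in points the paper leaves implicit; the only slip is cosmetic, since the seminorm bound carries the Plancherel factor $(2\pi)^{3/2}$ and should read $\lesssim$ rather than $\le 2\|u\|_{L^4}^2$.
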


\begin{proof} Unlike before, when looking at $ \cC^\omega_{b,\eta}(L^2_k) $, the roles of $ k $ and $ \eta $ are reversed. From  \eqref{autreformulapourwbis} and Plancherel theorem, we can assert that
\[ \forall \, \eta \in \RR^3 \, , \qquad \parallel w(\cdot,\eta)\parallel_{L^2} \lesssim \parallel u(\cdot-\eta/2) \, \bar u (\cdot+\eta/2) \parallel_{L^2} \leq \parallel u\parallel_{L^4}^2 \, . \]
Let $ h \in \mathbb R^3 $.  From the relation  \eqref{autreformulapourwbis}, we get also that
\[ \begin{array}{rl}
w(k , \eta + 2 h) - w(k , \eta)  = \! \! \! &  \displaystyle \int_{\RR^3} e^{-i \, k \cdot x} \ \Bigl \lbrack u \Bigl( x - \frac{\eta}{2} - h \Bigr) - u \Bigl( x - \frac{\eta}{2} \Bigr) \Bigr \rbrack \ \bar u \Bigl( x + \frac{\eta}{2} + h \Bigr) \ dx \smallskip \\
\ & \displaystyle + \int_{\RR^3} e^{-i \, k \cdot x} \ u \Bigl( x - \frac{\eta}{2} \Bigr) \
\Bigl \lbrack \bar u \Bigl( x + \frac{\eta}{2} + h \Bigr) - \bar u \Bigl( x + \frac{\eta}{2} \Bigr) \Bigr \rbrack \ dx
\end{array}
\]
Hence, by Plancherel theorem, we have
\begin{equation}\label{hencewehavebis}
\parallel w(\cdot , \eta + 2 h) - w(\cdot , \eta)  \parallel_{L^2} \lesssim \, \parallel u \parallel_{L^4} \, \bigl( \parallel u ( \cdot - h) - u \parallel_{L^4} + \parallel u ( \cdot + h) - u \parallel_{L^4} \bigr) \, .
\end{equation}
From there, the proof
follows the same lines as
the one of Lemma \ref{Propertiesofprepareddata}. It suffices to replace $ \tilde \omega $ by
\[ \mathbb R_+ \ni s \mapsto \check \omega (s) := \frac{1}{\parallel u \parallel_{L^4}} \ \sup_{\vert h \vert \leq s} \ \parallel u ( \cdot + h) - u \parallel_{L^4} \, , \qquad \lim_{s \rightarrow + \infty } \ \check \omega (s) := 2 \, , \]
and to introduce the concave hull $ \omega $ of $ \check \omega $.
\end{proof}


\subsection{Construction of weak solutions to Fourier--Moyal equation}
\label{subsec:propagationinfty}
In this section, we establish a first link between $ \text{SKG}_{\chi} $ and FM equations in the context of $ L^2 $-data. To this end, we will have to impose a {\it strong cutoff} in the sense that $ \tilde \chi \in L^1_+ $, which is equivalent to
\begin{equation}\label{strongcutoff}
\exists \, \tilde s>0 \, ; \quad \tilde \chi = \omk^{-1} \, \chi^2 \in L^1_{\tilde s}
\quad \Longleftrightarrow \quad \chi \in L^2_{-(1/2)+} = \cup \, \bigl \lbrace L^2_s \, ; \, s  > -1/2 \bigr \rbrace \, .
\end{equation}
Let $ T \in \mathbb R_+^* $.
Consider a solution $ u \in L^\infty([0,T];L^2_x) $  to $ \text{SKG}_{\chi} $ whose $ L^2 $-norm is conserved. From Lemma \ref{Propertiesofprepareddata}-(e),
for all $ t \in \mathbb [0,T] $, we can extract from $ u(t,\cdot) $ a modulus of continuity $ \omega_t $ which is such that
\[\forall \, t \in [0,T] \, , \qquad \parallel w(t,\cdot) \parallel_{L^\infty(\cC^{\omega_t}_b)} \lesssim \parallel u_0 \parallel_{L^2}^2 \, . \]
The question is whether  $ \omega_t $ can be replaced by $ \omega_0 \, $? In such case, some  microlocal information (independent from the time evolution) concerning $ u $ is propagated because the instrument $ \omega \equiv \omega_0 $ used to measure  the $ L^\infty(\cC^{\omega}_b) $-norm of $ u(t,\cdot) $ is no longer dependent on $ u(t,\cdot) $ but only on the  initial data $ u_0 $.
With this in mind, given the fixed modulus of continuity $ \omega \equiv \omega_0$, it is interesting to investigate how the quantity $ \parallel w \parallel_{L^\infty(\cC^\omega_b)} $ is modified by the Fourier--Moyal equation.
In other words, we want to give a sense to the following diagram (D):
\[ 
\begin{tikzcd}
L^2 (\mathbb R^2) \ni \arrow[d] & u_0
 \arrow[r,"\text{Schr\"odinger}"] \arrow[d,"\displaystyle \begin{array}{c}
\text{$ \ $} \\
\text{                                                                    Fourier--Wigner}\\
\text{transform}\\
\text{$ \ $}
\end{array}"'] &
u(t,\cdot) \arrow{d}{\displaystyle \begin{array}{c}
\text{$ \ $} \\
\text{                                                                    Fourier--Wigner}\\
\text{transform}\\
\text{$ \ $}
\end{array}} & \in L^2(\mathbb R^3) \arrow[d]
\\
L^\infty(\cC^\omega_b) \ni & w_0 = (2 \pi)^{-3} \cF {\rm W} (u_0)
 \arrow[r,"\text{ Moyal }"] &  w(t,\cdot) = (2 \pi)^{-3} 
\cF {\rm W} \bigl(u(t,\cdot) \bigr) & \in L^\infty(\cC^\omega_b)
\end{tikzcd} 
\]
The idea is to exploit the bottom arrow in order to extract some information about $ \text{SKG}_{\chi} $. Recall that the set $ \mathscr W_{\tilde \imath} $ is defined according to \eqref{modconw}, whereas
$ \tau_u $ is given by  \eqref{translatioonmap}.
Given $ \omega \in \mathscr W_{\tilde \imath} $,  define the vector space
\begin{equation}\label{vectoespaceEo}
 E_\omega := \bigl \lbrace \, u \in L^2(\mathbb R^3) \, ; \, \parallel \tau_u \parallel_{\cC^\omega_b (L^2(\mathbb R^3))} <  + \infty \, \bigr \rbrace \, ,
\end{equation}
equipped with the distance
\begin{equation}\label{distanceequi}
 d_\omega (u,\tilde u) := \,  \parallel \cF  \text{W}(u) - \cF  \text{W}(\tilde u) \parallel_{L^\infty(\cC^\omega_b)} .
\end{equation}
Combining Lemma \ref{Propertiesofprepareddata} and Remark \ref{remtildeimath}, we can assert that
\begin{equation}\label{coverofL2}
 L^2(\mathbb R^3) = \cup \, \lbrace E_\omega \, ; \, \omega \in \mathscr W_{\tilde \imath}\rbrace \, .
\end{equation}
This cover of $ L^2(\mathbb R^3) $ by the subspaces $ E_\omega $ gives a way to control all $ L^2 $-solutions to $ \text{SKG}_{\chi} $ by following the time evolution of $ d_\omega $ (with $ \omega \equiv \omega_0 $) at the level of the Fourier--Moyal equation.

\begin{theo}[Global existence, uniqueness, and stability for $L^2$-prepared data]\label{GESPD}
Assume that $\tilde \chi \in L^1_+$. Let $\tilde \alpha_0 \in L^\infty_{1/2}(\mathbb R^3)$, and choose $(A_0,A_1)$ according to \eqref{SKGreinter2ini}, with
\[
\alpha_0=\chi\,\tilde \alpha_0.
\]
Then the following statements hold:
\begin{itemize}
\item For every $u_0\in L^2(\mathbb R^3)$, the Cauchy problem \eqref{PSKG}--\eqref{inidataSKGinif} for the $\mathrm{SKG}_{\chi}$ system admits a unique global solution 
which is such that
\[
u\in L^\infty\bigl(\mathbb R_+;L^2(\mathbb R^3)\bigr) \, , \qquad (A, \part_tA) \in L^\infty \bigl(\mathbb R_+;H^{1/2+}(\mathbb R^3) \times H^{-1/2+}(\mathbb R^3)\bigr) .
\]
\item For every
\[
w_0=(2\pi)^{-3}\,\cF\mathrm{W}(u_0),
\qquad u_0\in L^2(\mathbb R^3),
\]
the Cauchy problem \eqref{transporteqonw}--\eqref{inidatatransporteqonw} for the $\mathrm{FM}^{\tilde \alpha_0}_{\tilde \chi}$ equation admits a unique global solution
\[
w\in \cC^0\bigl(\mathbb R_+;L^\infty(\mathbb R^3\times\mathbb R^3)\bigr)
\]
satisfying \eqref{wellprepatt}.
\end{itemize}

\noindent Moreover, given $\tilde s$ as in \eqref{strongcutoff}, for every $\omega\in\mathscr W_{\tilde \imath}$ with $0<\tilde \imath\leq\tilde s$ and every $t\in\mathbb R_+$, the flow map
\begin{equation}\label{flowmapp}
\Phi_t:E_\omega\longrightarrow E_\omega,
\qquad
\Phi_t(u_0):=u(t,\cdot),
\end{equation}
is well defined and locally Lipschitz with respect to the distance $d_\omega$.
\end{theo}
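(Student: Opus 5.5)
We follow the approximation scheme of Paragraph~\ref{subsubsec:approximationscheme}. For the regularized data $(\chi^n,\tilde\alpha_0^n,u_0^n)$ we have global smooth solutions $(u^n,A^n)$ of $\mathrm{SKG}_{\chi^n}$. By \eqref{preparedpropagationattimetn}, the corresponding $w^n$ solve $\mathrm{FM}^{\tilde\alpha_0^n}_{\tilde\chi^n}$. They also satisfy the uniform bounds \eqref{conservnn} and, by Lemma~\ref{Propertiesofprepareddata}, $\|w^n(t)\|_{L^\infty}\le\|u_0\|_{L^2}^2$.

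\emph{Step 1: uniform field bounds.} The field is controlled through \eqref{integrateddealpha}. Since $|w^n(s,k,0)|\le M^2$ and $\chi\in L^2_{-1/2+}$ by \eqref{strongcutoff}, the Duhamel term $\omk^{-1/2}\chi\,w^n(s,k,0)$ lies in $L^2_{\tilde s/2}$ uniformly in $n$, locally uniformly in $t$. The free part $e^{-i\omk t}\chi\tilde\alpha_0$ lies in $L^2_{\tilde s/2}$ as well, because $\tilde\alpha_0\in L^\infty_{1/2}$. Through \eqref{recoveralpha0}, this gives the stated $H^{1/2+}\times H^{-1/2+}$ bounds on $(A,\partial_tA)$.

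\emph{Step 2: passage to the limit.} By weak-$*$ compactness we extract limits $u^n\rightharpoonup u$ and $w^n\rightharpoonup w$. Lemma~\ref{lemmedeweakconti} then gives $w=(2\pi)^{-3}\cF\mathrm{W}(u)$, so $w$ is $L^2$-prepared for every $t$. To make the extraction independent of $t$, we obtain equicontinuity in time from the equation itself, in a weak or negative norm, and apply Arzelà--Ascoli. To pass to the limit in $\cL_t w^n+\cQ_t(w^n)$, we use that $\tilde\chi^n\to\tilde\chi$ in $L^1$. The trace $w^n(\tilde k,0)=\cF(|u^n|^2)(\tilde k)$ converges pointwise, since we will show $u^n\to u$ strongly. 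With the uniform $L^\infty$ bound, dominated convergence handles the $\tilde k$-integral against $\tilde\chi\in L^1$.

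\emph{Step 3: key estimate and its consequences.} The core is a stability estimate for $d_\omega$, derived at the FM level with $\omega=\omega_0$ fixed. We apply Lemma~\ref{continuityYBfirstcase} with $l=s=\imath=0$, $\tilde p=1$ and $p=q=r=\infty$. Since $\tilde\imath\le\tilde s$, there is no loss of weight. The lemma gives
\[\|\cB_{\tilde\chi}[w_1,w_2]\|_{L^\infty(\cC^\omega_b)}\lesssim\|\tilde\chi\|_{L^1_{\tilde s}}\|w_1\|_{L^\infty}\|w_2\|_{L^\infty(\cC^\omega_b)}.\]
This is the $L_1$ case of Theorem~\ref{mildtotal} with $q=\infty$. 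The first factor $\|w_1\|_{L^\infty}$ is, for prepared data, bounded by the conserved mass alone, by Lemma~\ref{Propertiesofprepareddata}(b). Hence the Picard/Gronwall argument of Theorem~\ref{mildtotal} runs with constants depending only on $M$, and the local existence time is uniform. This yields:
\begin{itemize}
\item a global bound on $\|w(t)\|_{L^\infty(\cC^\omega_b)}$;
\item through the difference estimate, local Lipschitz dependence in $d_\omega$, i.e.\ the claim on $\Phi_t$ in \eqref{flowmapp};
\item uniqueness of $w$ in the prepared class, hence uniqueness of $u$ up to phase. The phase is fixed by the Schr\"odinger equation, as in Proposition~\ref{Equivalenceofsystems}.
\end{itemize}
Applying the difference estimate to $w^n-w^m$ shows that $(w^n)$ is Cauchy in $L^\infty(\cC^\omega_b)$. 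In particular $w^n(t,k,0)$ converges uniformly, which is the strong convergence of the traces needed in Step~2. Continuity of $t\mapsto w(t)$ in $L^\infty$ follows from the integral formulation \eqref{integralversionSKGSPhi}.

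\emph{Main obstacle.} The hard step is making the $d_\omega$-Cauchy argument uniform in $n$. This requires the $u_0^n$ to lie in a common $E_\omega$, with $\|\tau_{u_0^n}\|_{\cC^\omega_b(L^2)}$ bounded uniformly in $n$. Convolution with $\psi^n$ preserves the translation modulus, but the truncation $\psi(|x|/n)$ might not. If it fails, we first try enlarging $\omega$ by taking a concave hull, as in Remark~\ref{remtildeimath}. A second issue is that for FM solutions which are not yet known to be prepared, the $L^\infty$ bound on the trace must first be propagated along the Picard iteration. Only afterwards can the limit be identified as prepared and the conserved-mass bound be used.
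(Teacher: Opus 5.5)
Your outline follows the paper's proof: the approximation scheme, Lemma~\ref{continuityYBfirstcase} with $\tilde p=1$, $(l,s,\imath)=(0,0,0)$, $p=q=r=\infty$, mass conservation to linearize the quadratic term, Theorem~\ref{mildtotal} plus continuation, and Lemma~\ref{lemmedeweakconti} for preparedness. However, the step on which your trace convergence and your existence argument rest does not work as written. You assert that ``applying the difference estimate to $w^n-w^m$ shows that $(w^n)$ is Cauchy in $L^\infty(\cC^\omega_b)$''. That estimate bounds $\|w^n(t)-w^m(t)\|_{L^\infty(\cC^\omega_b)}$ by $\|w^n_0-w^m_0\|_{L^\infty(\cC^\omega_b)}$ plus forcing terms from $\tilde\chi^n-\tilde\chi^m$ and $\tilde\alpha^n_0-\tilde\alpha^m_0$, since the $w^n$ solve different FM equations. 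You never show that these vanish.

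For $\tilde\alpha_0$ one can only expect $L^1$-convergence of the products $\tilde\chi^n\omk^{1/2}\tilde\alpha^n_0$, not $L^\infty_{1/2}$-convergence. More seriously, $w^n_0\to w_0$ holds in $L^\infty$, but the $\omega$-seminorm of $w^n_0-w_0$ is not controlled. The bound furnished by the cross-term version of \eqref{hencewehave} involves $\vert\tau_{u^n_0-u_0}\!\downharpoonright_\omega$. This quantity in general does not tend to zero when $\omega$ is the modulus extracted from $u_0$ itself: mollification converges in a H\"older-type seminorm only on the corresponding ``little'' subspace. The paper's sentence ``on this ball, the FM-flow is locally Lipschitz with respect to the $L^\infty(\cC^\omega_b)$-norm. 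This implies that $(w^n)_n$ converges'' skips exactly the same point.

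Conversely, the obstacle you single out is harmless. One has $\|\tau_hu^n_0-u^n_0\|_{L^2}\le\|\tau_hu_0-u_0\|_{L^2}+\min(1,\|\psi'\|_{L^\infty}\vert h\vert)\,\|u_0\|_{L^2}$, and $\min(1,z)\lesssim\omega(z)$ for any concave modulus. So uniform boundedness in $L^\infty(\cC^\omega_b)$ is fine; convergence is the real issue.

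The repair is to run the Cauchy argument in plain $L^\infty_{k,\eta}$, where no modulus is needed once $\tilde\chi\in L^1$. For $\eta$-continuous $w_1$, \eqref{bilitodefine} gives directly
$\|\cB_{\tilde\chi}[w_1,w_2]\|_{L^\infty}\le 2(2\pi)^{-3}\int\vert\tilde\chi(\tilde k)\vert\,\vert w_1(\tilde k,0)\vert\,d\tilde k\;\|w_2\|_{L^\infty}$.
Combine this with three facts:
$\|w^n(t)\|_{L^\infty}\le M^2$;
$\|w^n_0-w_0\|_{L^\infty}\le 2M\|u^n_0-u_0\|_{L^2}$;
the $L^1$-convergence of $\tilde\chi^n$ and of $\tilde\chi^n\omk^{1/2}\tilde\alpha^n_0$.
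Gronwall then makes $(w^n)$ Cauchy in $\cC^0([0,T];L^\infty)$. This gives uniform convergence of the traces, the limit equation and, by the same estimate, uniqueness.

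The $\omega$-bound then passes to the limit by lower semicontinuity of $\vert\cdot\!\downharpoonright_\omega$ under pointwise convergence. The $d_\omega$-Lipschitz property should be proved for the limit equation only, where two solutions share $(\tilde\chi,\tilde\alpha_0)$ and start at distance $d_\omega(u_0,\tilde u_0)$.

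Finally, your Step~1, like the paper's argument via \eqref{sec5:evol_eq_alpha_tilde}, bounds $(A,\partial_tA)$ only locally uniformly in time, because the Duhamel term grows linearly in $t$. So the $L^\infty(\mathbb R_+;\cdot)$ claimed in the statement is not reached.
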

\begin{rem} 
In view of \eqref{coverofL2},
we can find $ \omega_0 \in\mathscr W_{\tilde \imath} $ and $ \tilde \omega_0 \in\mathscr W_{\tilde \imath} $ such that $ u_0 \in E_{\omega_0} $ and $ \tilde u_0 \in E_{\tilde \omega_0} $. Let $ \omega $ be the modulus of continuity whose graph is the concave hull of the graph of the function $ \max \, (\omega_0 ; \tilde \omega_0) $. By construction, on $ [0,\bar z] $ with $ 0 < \bar z $ small enough, we have
\[ \begin{array}{ll}
0 \leq c \, z^{\tilde \imath} \leq \omega_0  (z) \leq \omega (z) \, , \quad & \forall z \in [0,\bar z] , \\
0 \leq \tilde c \, z^{\tilde \imath} \leq \tilde \omega_0  (z) \leq \omega  (z) , \ & \forall z \in [0,\bar z] , 
\end{array} \]
and therefore
\[ 0 \leq \min(c;\tilde c) \, z^{\tilde \imath} \leq \omega (z) \, , \quad \forall z \in [0,\bar z] . \]
This means that $ \omega $ is in $ \mathscr W_{\tilde \imath}  $. Moreover, since $ \omega $ is a relaxed version of both $ \omega_0 $ and $ \tilde \omega_0 $, we can assert that $ u_0 \in E_\omega $ and $ \tilde u_0 \in E_\omega $. This choice of $ \omega $ is (at time $ t=0 $) a tool which allows to control simultaneously the $ L^\infty(\cC^\omega_b) $-regularity of $ \cF {\rm W} (u_0)$ and $ \cF {\rm W} (\tilde u_0) $. Knowing that $ (u_0,\tilde u_0)\in E_\omega^2 $, it follows from Theorem \ref{GESPD} that the distance $d_\omega$ between the corresponding solutions $u(t,\cdot)$ and $\tilde u(t,\cdot)$ is controlled in terms of the initial distance $d_\omega(u_0,\tilde u_0)$. Notice, however, that this stability estimate is not uniform on $L^2(\RR^3)$, as it depends on the choice of an adequate subspace $E_\omega$ containing the initial data.
\end{rem}

\begin{proof} 
We begin with a few comments regarding the existence theory related to mild solutions of $\mathrm{SKG}_{\chi}$. This subject can fall within the scope of \cite{MR2035502}, whose approach gives access (for $ SKG $) to solutions satisfying 
$ u \in L^\infty(\mathbb R_+;L^2 ) $. From there, let us describe the regularity of $ (A, \part_tA) $. We can seek $ \alpha $ in the form $ \alpha = \chi \, \tilde \alpha $.
Exploiting \eqref{SKGSyequibis}, we deduce that 
\begin{equation}\label{sec5:evol_eq_alpha_tilde}
 \partial_t \bigl( \omk^{1/2} \, e^{i \omk t} \, \tilde \alpha \bigr) = - i \, (2 \pi)^{-3/2} \, e^{i \omk t} \, \cF_x \bigl( \vert u(t,\cdot) \vert^2 \bigr)(k) \in L^\infty_{loc}\bigl(\RR_+; L^\infty(\RR^3) \bigr) , 
 \end{equation}
while by assumption, we start with
\[ \omk^{1/2} \,\tilde \alpha_{\mid t= 0} = \omk^{1/2} \,\tilde \alpha_0 \in L^\infty(\RR^3) . \]
It folows that $ \tilde \alpha (t,\cdot) \in L^\infty_{1/2} $ for all $ t \in \RR_+ $. Now, using \eqref{recoveralpha01}, we get that
\[ \omk^{1/2} \, \cF_x \bigl( A (t,\cdot) \bigr) = (2 \pi)^{d/2} \ \underbrace{\, \omk^{-1/2} \chi \,}_{\in L^2_{0+}} \ 
\underbrace{\, \cI \bigl(\omk^{1/2} \tilde \alpha (t,\cdot) \bigr)}_{\in L^\infty} \in L^2_{0+}. \]
This implies that $ A (t,\cdot) \in H^{1/2+}$ as claimed. A similar argument based this time on \eqref{recoveralpha02} allows to assert that  $ \part_t A (t,\cdot) \in H^{-1/2+}$.

We now adopt the Fourier--Moyal perspective.
We start with data satisfying \eqref{inidataboundMicutcut} for some $ M $. We exploit the approximation scheme of Paragrah \ref{subsubsec:approximationscheme} to obtain sequences $ (u^n)_n $ and $ (w^n)_n $
satisfying \eqref{preparedpropagationattimetn} and \eqref{conservnn}. Apply  Lemma \ref{Propertiesofprepareddata}-$ (e) $ to extract from $ u_0 $ a  modulus of continuity $ \omega $ yielding $ \parallel w_0 \parallel_{L^\infty(\cC^\omega_b)} \lesssim M^2 $.
As explained in the proof of Lemma \ref{Propertiesofprepareddata}, see especially \eqref{normtrinvariant}, the $ L^\infty(\cC^\omega_b) $-norm of $ w $ is controlled by the $ L^2 $-norm of $ u $ and 
$ \vert \tau_u \! \downharpoonright_{\tilde \omega} $. By young's inequality, we have
\[ \parallel u^n_0 \parallel_{L^2} \lesssim \parallel u_0 * \psi^n \parallel_{L^2} \lesssim \parallel u_0 \parallel_{L^2} \, , \qquad \vert \tau_{u^n_0} \! \downharpoonright_{\tilde \omega} \lesssim \vert \tau_{u_0 * \psi^n} \! \downharpoonright_{\tilde \omega}  \lesssim \parallel u_0 \parallel_{L^2} .
\]
As a consequence, we start with the uniform (with respect to $ n $) control
\begin{equation}\label{conservnnomega}
\qquad \forall \, n \in \NN_+ \, , \quad \
\parallel w^n_0  \parallel_{L^\infty(\cC^\omega_b)} \lesssim M^2 \, .
\end{equation}
Coming back to the definition \eqref{int.defdecLcQt} of $ \mathcal L_t $, we can apply Lemma \ref{continuityYBfirstcase} with $ \tilde \chi^n \in L^1_{\tilde s} $ (so that $ \tilde p = 1 $) and $ \tilde \iota = \tilde s \in \RR_+^* $. We select $ (l,s,\imath) = (0,0,0) $ and $ (p,q,r) = (\infty,\infty,\infty) $. This furnishes
\[ \parallel \cL_t w^n \parallel_{L^\infty (\cC^{\omega}_b)} \, \lesssim \, \parallel \tilde \chi^n \parallel_{L^1_{\tilde s}} \ \parallel \tilde \alpha_0^n \parallel_{L^\infty_{1/2}} \
\parallel w^n(t,\cdot) \parallel_{L^\infty (\cC^{\omega}_b)} \, \lesssim \, \parallel w^n(t,\cdot) \parallel_{L^\infty (\cC^{\omega}_b)} \, . \]
The same argument gives rise to
\[ \parallel \cB_{\tilde \chi} \bigl \lbrack g_{w^n}(s,t,\cdot), w^n (t,\cdot) \bigr \rbrack \parallel_{L^\infty (\cC^{\omega}_b)} \, \lesssim \, \parallel w^n (s,\cdot) \parallel_{L^\infty} \
\parallel w^n(t,\cdot) \parallel_{L^\infty (\cC^{\omega}_b)} \, . \]
Now, the a priori sup-norm estimate inside \eqref{conservnn} allows to break the quadratic growth on the right hand side. We can retain that
\[ \parallel \cB_{\tilde \chi} \bigl \lbrack g_{w^n}(s,t,\cdot), w^n (t,\cdot) \bigr \rbrack \parallel_{L^\infty (\cC^{\omega}_b)} \, \lesssim \,
\parallel w^n(t,\cdot) \parallel_{L^\infty (\cC^{\omega}_b)} \, . \]
Taking into account the integral version \eqref{integralversionSKGS}-\eqref{integralversionSKGSPhi}, we can find a constant $ C $ (not depending on $ n $ and $ t $) such that, for all $ t \in \RR_+ $, we have
\begin{equation}
 \label{foralltjgzefa;} \forall \, n \in \NN^*_+ \, , \qquad \parallel w^n(t,\cdot) \parallel_{L^\infty (\cC^{\omega}_b)} \leq C \, e^{C \, (1+t^2)} .
\end{equation}
We implement (the proof of) Theorem  \ref{mildtotal} with $ (\tilde p,\tilde m) = (1,0) $ so that $ q = +\infty $. We consider initial data $ w_0 $ contained in the ball of $ L^\infty (\cC^{\omega}_b) $ of radius $ C $. Then, for some $ \cT \in \RR_+^* $, there is a unique local solution $ w \in L^\infty \bigl([0,\cT]; L^\infty (\cC^\omega_b)\bigr) $ to the
$ \text{FM} $-equation starting from $ w(0,\cdot) = w_0 $. Moreover, on this ball, the $ \text{FM} $-flow is locally Lipschitz with respect to the $ L^\infty (\cC^{\omega}_b) $-norm. This implies that $ (w^n)_n $ converges on $ [0,\cT] $ to $ w $. Then, passing to the limit at the level of \eqref{foralltjgzefa;}, we recover that
\begin{equation}
 \label{foralltjgzefa;limit} \forall \, t \in [0,\cT] \,  , \qquad \parallel w (t,\cdot) \parallel_{L^\infty (\cC^{\omega}_b)} \leq C \, e^{C \, (1+t^2)} .
\end{equation}
Let $ \cT_m \in \RR_+ \cup \{+\infty \} $ be the lifespan associated with $ w $. The bound \eqref{foralltjgzefa;limit} remains true as long as $ w $ stays in $ L^\infty (\cC^{\omega}_b) $, that is on $ [0,\cT_m ]$. Since the theory of continuation of solutions shows that, blow-up is the only possibility for a maximal solution to be
defined only on a finite time interval, the solution is in fact global ($ \cT_m = + \infty $).

\smallskip

\noindent Coming back to
 \eqref{integralversionSKGS}-\eqref{integralversionSKGSPhi}, the bound \eqref{foralltjgzefa;} guarantees also that the $ w^n $ are uniformly locally Lipschitz in time with values in $ L^\infty (\cC^{\omega}_b) $, and hence locally in $ L^2 $. By Arzel\`a-Ascoli theorem, passing to the weak limit through Lemma \ref{lemmedeweakconti}, we recover that, for all $ t\in \RR_+ $, the expression $ w(t,\cdot) $ is as in \eqref{apreslimite}
for some $ u (t,\cdot) $. Since $ w $ is unique, the same applies to $ u $. On the other hand, from  \cite{MR2035502}, we know that $ (u^n)_n $ converge to the solution $ u $ of  the $ \text{SKG}_{\chi} $-system. Thus, we have found an alternative way (based on completely different ideas) of constructing unique weak $ L^2 $-solutions to the $ \text{SKG}_{\chi} $-system. The properties of $ \Phi_t $ are just a traduction of what happens at the level of $ w $. Incidentally, our method allows to exhibit the family of metrics $ d_\omega $ allowing to recover uniqueness.
\end{proof}


\subsection{Relaxation of the condition on the cutoff}\label{subsec:propagationl2} Knowing that $ u_0 \in L^2 \cap L^4 $, the aim here is to improve the constraint $ \tilde \chi \in L^1_+ $
inside Theorem \ref{GESPD} into $ \tilde \chi \in L^2 $. To this end, the main  difficulty is to pass to the weak limit in the FM-equation
when there is no (local) result of strong well-posedness (as could be provided by Theorem \ref{mildtotal}). As noted below, the uniform estimates inside \eqref{conservnn} are not sufficient to deal with the computation of a trace, as involved in the $ \cY \cB $-map.

\begin{rem}[Defect of weak continuity when composing $ \cF_{x,\xi} \circ {\rm W} $ with the trace
at $ \eta = 0 $] \label{lemmedeweakcontitrace} The framework is as in Lemma \ref{lemmedeweakconti}. By Lemma \ref{Propertiesofprepareddata}-(d), we can give
a meaning to
\[ \begin{array}{rcl}
Tr : L^2 (\RR^3) & \longrightarrow & L^\infty (\RR^3) \\
u & \longmapsto & \begin{array}{rcl}
Tr(u) : \RR^3 & \longrightarrow & \RR^3 \\
k &  \longmapsto & w(k,0) := (2 \, \pi)^{-3} \, \cF_{x,\xi} \circ {\rm W} (u) (k,0 ).
\end{array}
\end{array} \]
However, this map is not weakly continuous. To see why, it suffices to produce a counterexample. Consider the sequence
\[ u^n (x) := e^{i n \xi \cdot x} \tilde u(x) , \qquad n \in \NN, \qquad \xi \in \RR^3 \setminus \{ 0 \} , \qquad 0 \not \equiv
\tilde u \in \cC^0_0 .\]
In view of (\ref{transrot}), we have
\[ w^n (k,\eta) := (2 \, \pi)^{-3} \, \cF_{x,\xi} \circ {\rm W} (u^n) (k,\eta) = e^{-i n \xi \cdot \eta} \, \tilde w (k,\eta) , \qquad
\tilde w  := \cF_{x,\xi} \circ {\rm W} (\tilde u) \not \equiv 0 .  \]
The sequence $ (u^n)_n $ converges weakly in $ L^2 $ to $ u^\infty = 0 $ and, in coherence with Lemma \ref{lemmedeweakconti},
the sequence $ (w^n)_n $ converges weakly in $ L^2 $ to $ w^\infty = 0 $. On the other hand, we find
\[ \forall n \in \NN, \qquad Tr (u^n) (k) = w^n(k,0) = \tilde w (k,0) = Tr (\tilde u) (k) = \cF_x \bigl( \vert \tilde u \vert^2 \bigr)(k)
\not \equiv 0 =  Tr(u^\infty) . \]
Thus, the sequence $ \bigl( Tr(u_n) \bigr)_n $ does not converge weakly-$ \star $ in $ L^\infty $ to $ Tr(u_\infty) $.
\end{rem}

\noindent In the context of Lemma \ref{lemmedeweakconti}, the sequence $ ( w^n(. , 0) ) _n $ is bounded in $ L^\infty (\RR^3) $.
After extraction of a subsequence, the expressions $ w^n (. , 0) = \cF ( \vert u^n \vert^2)$ converge weakly-$ \star $ to some limit $ \tilde w (k) $, while $ ( \vert u_n \vert^2 )_n $
does converge weakly-$ \star $ to some Radon measure $ \mu $ (with finite mass). We have $ \tilde w = \cF_x (\mu) $ but, in
general, there is no link between $ \mu $ and $ \vert u \vert^2 $. The trace $ w(\cdot, 0) $ is not well defined, and it cannot be associated with $ \tilde w $. This implies that, when
looking at (\ref{transporteqonw}) through compactness arguments, care should be taken to the
meaning of limit terms.

\begin{theo}[Global existence of weak solutions]\label{globalexistence}
Assume that
\begin{equation}\label{inidataboundMicutcutbis}
\tilde\chi \in L^2,
\qquad
\tilde\alpha_0 \in L^\infty_{1/2},
\qquad
u_0 \in L^2 \cap L^4,
\qquad
w_0 := (2\pi)^{-3}\,
\cF_{x,\xi} \circ {\rm W}(u_0).
\end{equation}
Then the Fourier--Moyal initial value problem
\eqref{transporteqonw}--\eqref{inidatatransporteqonw}
admits a global  weak $ L^2 $-solution  $w$.

\noindent Moreover, there exists
\[
u \in L^\infty_{\rm loc}(\RR_+;L^2)
\]
such that, for every $t\in\RR_+$,
\begin{equation}\label{issuchthat}
w(t,\cdot) \in \cC^\omega_{b,\eta}(L^2_k),
\qquad
w(t,\cdot)
= (2\pi)^{-3}\,
\cF_{x,\xi} \circ {\rm W}\bigl(u(t,\cdot)\bigr).
\end{equation}
\end{theo}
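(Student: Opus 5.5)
The plan is a compactness argument built on the approximation scheme of Paragraph~\ref{subsubsec:approximationscheme}. We take the smooth prepared solutions $w^n(t,\cdot)=(2\pi)^{-3}\cF_{x,\xi}\circ{\rm W}(u^n(t,\cdot))$ of $\mathrm{FM}^{\tilde\alpha_0^n}_{\tilde\chi^n}$, now with $\tilde\chi^n\to\tilde\chi$ in $L^2$ and $u_0^n\to u_0$ in $L^2\cap L^4$. We then pass to the limit in a weak formulation of \eqref{transporteqonw}.

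\textbf{Uniform bounds.} By \eqref{conservnn}, $\|u^n(t)\|_{L^2}$, $\|w^n(t)\|_{L^2}$ and $\|w^n(t)\|_{L^\infty}$ are bounded uniformly in $n$ and $t$. In addition we need an $L^4$ bound $\|u^n(t)\|_{L^4}\lesssim C(T)$, locally uniform in time. For this we use the dispersive/energy estimates for $\mathrm{SKG}_{\chi^n}$ recalled from \cite{MR778979}, uniform because $\|\chi^n\|_{L^\infty}\le M$. Lemma~\ref{Propertiesofprepareddata24}(f) then gives $\|w^n(t,\cdot)\|_{\cC^\omega_{b,\eta}(L^2_k)}\lesssim C(T)$.

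We would like $\omega$ to be independent of $n$ and $t$. The natural candidate is an $L^4$-equicontinuity modulus of the family $\{u^n(t)\}$, obtained from a uniform $L^4$-modulus of translation. If this cannot be achieved, we fall back to a Banach--Alaoglu argument and recover the modulus only in the limit, via weak lower semicontinuity.

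\textbf{Compactness in time.} The time derivative $\partial_t w^n=(k\cdot\nabla_\eta)w^n-\cL_tw^n-\cQ_t(w^n)$ is bounded in a negative Sobolev space in $(k,\eta)$. For $\cL_t w^n$ and $\cQ_t(w^n)$ this follows from Lemma~\ref{continuityYBbasic}-type bounds, using $|w^n(s,k,0)|\le M^2$ and $\tilde\chi\in L^2$. An Aubin--Lions/Arzel\`a--Ascoli argument then yields a subsequence with $w^n\to w$ in $\cC^0([0,T];L^2_{w})$ for every $T$. Lemma~\ref{lemmedeweakconti}, applied at each $t$ with $u^n(t)\rightharpoonup u(t)$, gives the preparedness relation \eqref{issuchthat}. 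The bound $w(t)\in\cC^\omega_{b,\eta}(L^2_k)$ is inherited by weak lower semicontinuity of the norm.

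\textbf{Passing to the limit in the nonlinear term.} This is the main obstacle, because by Remark~\ref{lemmedeweakcontitrace} the trace $w^n(\cdot,0)$ is not weakly continuous. The key gain from $u\in L^4$ is the $\cC^\omega_{b,\eta}(L^2_k)$ control: it makes $\eta\mapsto w^n(t,\cdot,\eta)$ equicontinuous with values in $L^2_k$. Hence the trace at $\eta=0$ is a uniform limit of local averages $\fint_{|\eta|<\varepsilon}w^n(t,\cdot,\eta)\,d\eta$, each of which is weakly continuous in $n$. Consequently $w^n(t,\cdot,0)\rightharpoonup w(t,\cdot,0)$ weakly in $L^2_k$.

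Weak convergence of both factors is still insufficient for the product inside $\cB_{\tilde\chi^n}$. We would try to upgrade one factor to strong local convergence in $k$. The first attempt is to exploit that $w^n(t,\cdot,0)=\cF_x(|u^n|^2)$ is a Fourier transform of functions bounded in $L^1\cap L^2$. Together with the time equicontinuity, this should give local-in-$k$ strong convergence once tightness of $|u^n|^2$ in $x$ is established, which follows from the $x$-moment bounds encoded in the regularity of $w^n$ near $k=0$. Alternatively one can use an $\eta$-smoothing of the test function combined with the compactness of the convolution $\tilde\chi\,*$ in \eqref{bilitodefine}, exploiting $\tilde\chi\in L^2$: the map $w_1\mapsto\int\tilde\chi(\tilde k)\,w_1(\tilde k,0)\,\varphi(k-\tilde k,\cdot)\,d\tilde k$ is compact from weak-$L^2$ to strong-$L^2_{\rm loc}$ for test functions $\varphi$. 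With this, every term of the weak formulation converges, and $w$ is a global weak $L^2$-solution.
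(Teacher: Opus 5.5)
Your route is the paper's. It uses smooth prepared approximations, the uniform bound \eqref{stabilityL4} taken from \cite{MR778979}, and Lemma~\ref{Propertiesofprepareddata24}(f) to get a $\cC^\omega_{b,\eta}(L^2_k)$ bound. It then identifies the weak limit of the traces $w^n(t,\cdot,0)$ by averaging in $\eta$ (the paper tests against $\check\phi(\tilde k)\,\psi^m(\tilde\eta)$ and lets $m\to+\infty$), uses Lemma~\ref{lemmedeweakconti} for \eqref{issuchthat}, and concludes by weak-times-strong convergence in the bilinear terms.

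Your second compactness option is the paper's argument read from the other side. The paper shows that
\[
h^n(t,\tilde k)=\int\!\!\int \sin(\tilde k\cdot\eta/2)\,w^n(t,k,\eta)\,\varphi(t,k+\tilde k,\eta)\,dk\,d\eta
\]
converges strongly on compact sets, by Kondrachov in $\tilde k$ and Arzel\`a--Ascoli in $t$ (via a bound on $\partial_t h^n$ that uses \eqref{supcomega}). It then pairs $h^n$ with the weakly convergent factors $g^n_\diamond(t,\cdot,0)$ built from $\tilde\alpha^n_0$ and from the traces. You instead smear the trace factor against $\varphi$, which gives a Hilbert--Schmidt operator into $L^2$ of compact sets, and pair the result with the weakly convergent $w^n(t)$. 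This is the same bilinear form. Your weak-in-time compactness of $w^n$ needs only \eqref{conservnn} and $\tilde\chi\in L^2$, so it is a tidier way to get one subsequence with $w^n(t)\rightharpoonup w(t)$ for every $t$.

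Two corrections are needed here. First, drop the first strong-convergence attempt: nothing you have controls the $k$-regularity of $w^n$, hence neither $x$-moments nor tightness of $|u^n|^2$. Second, strong convergence on compact sets cannot be paired with a merely weakly convergent $w^n$. You must add the tail bound
\[
\sup_n\|g^n_\diamond(t,\cdot,0)\|_{L^2(\{|\tilde k|\geq R\})}\lesssim(1+t)\,\|\tilde\chi\|_{L^2(\{|\tilde k|\geq R-1\})}\longrightarrow0\qquad(R\to+\infty),
\]
which follows from the $L^\infty$ bound in \eqref{conservnn}. This is where $\tilde\chi\in L^2$ is actually used (local compactness only needs $\tilde\chi\in L^2_{\rm loc}$), exactly as in the last step of the paper.

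The genuine gap is the modulus $\omega$, and your fallback does not close it. The trace step needs equicontinuity of the \emph{sequence}: $\sup_n\sup_{|\eta|\leq\eps}\|w^n(t,\cdot,\eta)-w^n(t,\cdot,0)\|_{L^2_k}\to0$ as $\eps\to0$. A modulus recovered for the limit by lower semicontinuity says nothing about the limit of the traces. Remark~\ref{lemmedeweakcontitrace} is precisely this situation. There, $u^n=e^{in\xi\cdot x}\tilde u$ with $\tilde u\in L^2\cap L^4$ is bounded in $L^2\cap L^4$ and the limit $w^\infty=0$ has every modulus, yet $w^n(\cdot,0)=\cF_x(|\tilde u|^2)\not\rightharpoonup0$. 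So a modulus uniform in $(n,t)$ is indispensable; it could come, through \eqref{hencewehavebis}, from a uniform $L^4$-modulus of translation of $\{u^n(t)\}_{n,\,t\leq T}$. Your proposal does not produce it, and the paper only asserts it in the footnote to \eqref{supcomega}.

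Be aware also that \eqref{stabilityL4}, which you import exactly as the paper does, does not follow from energy or dispersive estimates under your hypotheses:
\begin{itemize}
\item energy estimates need $H^1$ data;
\item dispersive estimates go from $L^{4/3}$ to $L^4$;
\item $e^{it\Delta/2}$ is unbounded on $L^4(\RR^3)$. The chirped bumps $e^{-i|x|^2/(2t)}R^{-3/2}\phi(x/R)$ stay bounded in $L^2\cap L^4$, while their free evolutions at time $t$ have $L^4$-norm of order $(R/t)^{3/4}$.
\end{itemize}
By the closed graph theorem, some $u_0\in L^2\cap L^4$ is therefore already driven out of $L^4$ by the free flow. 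This input, and a fortiori its uniform-modulus version, needs a genuine argument in your proposal as much as in the paper.
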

\noindent The above result provides in a weaker setting\footnote{The word {\it weak} means here that we implement $ \tilde\chi \in L^2 \cap L^\infty $ instead of $ \tilde\chi \in L^1_+ \cap L^\infty $ as in Theorem \ref{GESPD}.} a notion of weak $L^2$-solution to the
$\mathrm{SKG}_{\chi}$ system.

\begin{proof} Let $ (w^n)_n $ be the sequence which has been  constructed in Paragraph \ref{subsubsec:approximationscheme}.
We work modulo the extraction of subsequences which are not mentioned. Exploiting \eqref{conservnn} together with Banach-Alaoglu theorem, we can suppose that $ (u^n)_n $ converges weakly in $ L^2 $ to some $ u $, and that $ (w^n)_n $ converges simultaneously weakly in $ L^2 $ and weakly-$ \star $ in
$ L^\infty $ to some $ w \in L^2 \cap L^\infty $. The goal is to explain why this $ w $ is still a weak  solution to $\mathrm{SKG}_{\chi}$.

\smallskip

\noindent From \eqref{preparedpropagationattimetn} and Lemma 
\ref{lemmedeweakconti}, we can already deduce the right part of \eqref{issuchthat}.
As already mentioned, the difficulty is to pass to the limit in the bilinear terms. 
We want to check that (\ref{transporteqonw}) is satisfied in a weak sense. To this end, we test (\ref{transporteqonw}) against a compactly supported smooth function $ \varphi \in \cC^\infty_c ( [0,T \lbrack \times \RR^3 \times \RR^3) $. Then, the matter is to study the limit (when $ n $ goes to $ +\infty $) of the sequences $ \bigl( \Xi^n_\diamond (0) \bigr)_n $ where 
\[ \Xi^n_\diamond (\tilde \eta) := \int_0^T \! \! \int_{\RR^3} g_\diamond^n (t,\tilde k,\tilde \eta) \ h^n (t,\tilde k)  \ dt \, d \tilde k , \qquad \tilde \eta \in \RR^3 \, , \qquad \diamond \in \{ \alpha,w\} , \]
 where (coming from the $ \cL_t $-part)
 \[ g^n_\alpha (t,\tilde k,\tilde \eta) := \frac{2}{(2 \, \pi)^{3/2}} \ \tilde  \chi^n (\tilde k) \ \cI_k \bigl( e^{-i \omtilk t} \, \omtilk^{1/2} \, \tilde \alpha_0^n ( \tilde k) \bigr)  , \]
 or (coming from the $ \cQ_t $-part)
 \[ g^n_w (t,\tilde k,\tilde \eta) := - \frac{2}{(2 \, \pi)^3} \ \int_0^t \tilde \chi^n (\tilde k) \ \cI_k \bigl( i \,  e^{i \omtilk (s-t)} \, w^n(s,\tilde k,\tilde \eta) \bigr) \, ds , \]
 as well as
 \[ h^n (t,\tilde k)  := \int_{\RR^3} \! \int_{\RR^3} \sin (\tilde k \cdot \eta /2 ) \, w^n (t,k, \eta) \, \varphi(t,k+\tilde k, \eta) \ d k \, d \eta . \]
The assumptions made on $ \tilde \alpha_0 $ and $ \tilde \chi $ together with (\ref{conservnn}) give rise to
\[ \sup_n \ \parallel \tilde \chi^n  \parallel_{L^2} < + \infty \, , \qquad \sup_n \ \parallel \omtilk^{1/2} \, \tilde \alpha^n_0 \parallel_{L^\infty} < + \infty \, , \qquad \sup_n \ \sup_{t\in \RR} \ \parallel w^n (t,\cdot)  \parallel_{L^\infty} < + \infty \, . \]
It follows that, for all $ \diamond \in \{ \alpha,w \} $ and all $ t \in \RR $, we have
\[ \sup_n \ \sup_{\tilde \eta \in \RR^3} \parallel g^n_\diamond (t,\cdot,\tilde \eta) \parallel_{L^2} \lesssim (1+t) < + \infty \, . \]
On the one hand, we have $ g^n_\alpha (t,\tilde k, \tilde \eta) = e^{-i \omtilk t} \, \tilde g^n_\alpha (k) $. Since the sequence $ (\tilde g^n_\alpha)_n $ is bounded in $ L^2 $, we can extract from it a weak limit $ \tilde g \in L^2 $. Then
\begin{equation}\label{passlimgnalpha}
 \forall \, (t,\tilde \eta) \in \RR_+ \times \RR^3 \, , \qquad g^n_\alpha (t,\cdot,\tilde \eta)  \underset{n \rightarrow + \infty}{\overset{\ast}{\rightharpoonup}} g^\infty_\alpha (t,k) := e^{-i \omtilk t} \, \tilde g (k) \, .
\end{equation}
On the other hand, in view of the definition of $ g^n_w $, we have
\[ \begin{array}{l}
\displaystyle \sup_n \ \sup_{\tilde \eta \in \RR^3}\parallel \partial_t g^n_w (t,\cdot,\tilde \eta) \parallel_{L^2} \lesssim \sup_n \  \sup_{\tilde \eta \in \RR^3} \ \bigl( \parallel \tilde \chi^n \, w^n (t,\cdot,\tilde \eta)  \parallel_{L^2} + \, t \sup_{s\in[0,t]} \parallel (\chi^n)^2  \, w^n (s,\cdot,\tilde \eta)  \parallel_{L^2}
\bigr) \\
\displaystyle \qquad \lesssim \sup_n \  \sup_{\tilde \eta \in \RR^3} \ \bigl( \parallel \tilde \chi^n \parallel_{L^2}\, \parallel w^n (t,\cdot)  \parallel_{L^\infty} + \, t \, \parallel \chi^n \parallel_{L^\infty}^2 \, \sup_{s\in[0,t]} \parallel w^n (s,\cdot,\tilde \eta)  \parallel_{L^2}
\bigr) \lesssim 1 +t\, .
   \end{array} \]
As proved in Subsection \ref{subsec:propagationinfty}, the conservation of some $ L^2 $- control on the solutions $ u $ to $\mathrm{SKG}_{\chi}$ is related to the propagation of the $ L^\infty (\cC^{\omega}_b) $-norm of the solutions $ w $ to $\mathrm{FM}_{\chi}$. Along similar lines (details are not specified), the conservation of some $ L^4 $-control on $ u $ is linked to the propagation of the $ \cC^{\omega}_b(L^2) $-norm of $ w $. These correspondances between the $ L^2 $ and $ L^4 $-conditions on $ u $ and the $ L^\infty (\cC^{\omega}_b) $ and $ \cC^{\omega}_b(L^2) $-estimates on $ w $ are exhibited in Lemmas \ref{Propertiesofprepareddata} and \ref{Propertiesofprepareddata24}. Now, in the continuity of the $ L^4 $-stability established in \cite{MR778979}, we can already assert that
\begin{equation}\label{stabilityL4}  
\forall \, T \in \RR_+ \, , \qquad \forall \, t \in[0,T] \, , \qquad \sup_{t \in[0,T]} \, \sup_n \ \parallel u^n (t,\cdot) \parallel_{L^4} < + \infty \, . 
\end{equation}
From there, we can deduce that\footnote{To be strictly accurate, starting from \eqref{stabilityL4}, Lemma \ref{Propertiesofprepareddata24} furnishes a modulus of continuity $ \omega^n_t $ which is induced by the $ L^4 $-information on $ w^n (t,\cdot) $. But, as previously mentioned, this $ \omega^n_t $ is issued from the truncated regularized initial data. It follows that it can be chosen uniform with respect to $ (t , n) $, say $ \omega^n_t = \omega $ not depending on $ (t , n) $.}
\begin{equation}\label{supcomega} 
\forall \, t \in \RR_+ \, , \qquad \sup_n \ \parallel w^n (t,\cdot) \parallel_{\cC^\omega_{b,\eta}(L^2_k)} < + \infty \, . 
\end{equation}
As stated in \eqref{issuchthat}, this implies that $ w(t,\cdot) \in \cC^\omega_{b,\eta}(L^2_k) $. As a consequence, we can assert that the sequence $ \bigl(g^n_w (t,\cdot,\tilde \eta) \bigr)_n \in L^2(\RR^3)^\NN $ is both uniformly bounded and uniformly equicontinuous with respect to $ (t,\tilde \eta) \in \RR_+ \times \RR^3 $. From there, we can apply (a weak version of) Arzel\`a-Ascoli theorem to get
\[ \exists \, g_w^\infty \in \cC^0  \bigl( \RR_+ \times \RR^3_\eta ;L^2_k (\RR^3) \bigr)
 \, ; \qquad \forall \, (t,\tilde \eta) \in \RR_+ \times \RR^3 \, , \qquad  g^n_w (t,\cdot,\tilde \eta)  \underset{n \rightarrow + \infty}{\overset{\ast}{\rightharpoonup}} g^\infty_w (t,k,\tilde \eta) \, .
\]
Let $ \tilde \varphi \in \cC^\infty_c (\RR^3 \times \RR^3 ) $. Coming back to the definition of $ g^n_w $, it is easy to see that
\[ \begin{array}{l}
\displaystyle - \frac{2}{(2 \, \pi)^3} \ \int_{\RR^3} \int_{\RR^3} \int_0^t \tilde \chi (\tilde k) \ \cI_k \bigl( i \,  e^{i \omtilk (s-t)} \, w(s,\tilde k,\tilde \eta) \bigr) \ \tilde \varphi (\tilde k , \tilde \eta) \ ds \, dk \, d\eta \\
\qquad \qquad \qquad \qquad \displaystyle = \int_{\RR^3} \int_{\RR^3} g^\infty_w (t,\tilde k , \tilde \eta) \  \varphi (\tilde k , \tilde \eta) \ ds \, dk \, d\eta \, .
   \end{array} \]
We can take $ \tilde \varphi $ in the form of the tensor product $ \check \phi (\tilde k) \, \psi^m (\tilde \eta) $ with $ \check \varphi \in \cC^\infty_c (\RR^3) $ and $ \psi^m $ as in \eqref{apprschemes}. Using the above continuity properties with respect to $ \tilde \eta $, passing to the limit when $ m $ goes to $ +\infty $, we
find that
\[ - \frac{2}{(2 \, \pi)^3} \ \int_{\RR^3} \int_0^t \tilde \chi (\tilde k) \ \cI_k \bigl( i \,  e^{i \omtilk (s-t)} \, w(s,\tilde k,0) \bigr) \ \check \phi (\tilde k) \ ds \, dk =  \int_{\RR^3} g^\infty_w (t,\tilde k , 0) \ \check \phi (\tilde k) \ ds \, dk \, . \]
This works for all $ \check \phi $. Thus, we have identified how to express $ g^\infty_w $ in terms of $ w $. Retain that, for all $ t \in \RR_+ $, we have (in the sense of $ L^2 $-weak limit) that
\begin{equation}\label{passlimgnw}
  \qquad g^n_w (t,\cdot,0)  \underset{n \rightarrow + \infty}{\overset{\ast}{\rightharpoonup}}
  g^\infty_w (t,\tilde k , 0) = - \frac{2}{(2 \, \pi)^3} \ \int_0^t \tilde \chi (\tilde k) \ \cI_k \bigl( i \,  e^{i \omtilk (s-t)} \, w(s,\tilde k,0) \bigr) \ ds \, .
 \end{equation}
Let us now consider what happens at the level of $ (h^n)_n $. It is clear that
\[ \forall \, (t,\tilde k) \, , \qquad \lim_{n \rightarrow + \infty} h^n (t,\tilde k) = h^\infty (t,\tilde k) := \int_{\RR^3} \! \int_{\RR^3} \sin (\tilde k \cdot \eta /2 ) \, w (t,k, \eta) \, \varphi(t,k+\tilde k, \eta) \ d k \, d \eta . \]
We claim that (modulo the extraction of subsequences), for all compact subset $ K $ of $ \RR^3 $, this convergence is strong in $ L^\infty \bigl( [0,T] ; L^2 (K) \bigr) $
. This is a consequence of Kondrachov embedding theorem (in $ \tilde k $) combined with Arzel\`a-Ascoli theorem (in $ t $). By H\"older's inequality, we have
\[ \begin{array}{rl}
    \vert h^n(t,\tilde k) \vert \! \! \! & \displaystyle \leq \int_{\RR^3} \Bigl( \int_{\RR^3} \vert \varphi(t,\tilde k + k, \eta) \vert \ \vert w^n (t,k, \eta) \vert \ d \eta  \Bigr) \, d k \smallskip \\
    \ & \displaystyle \leq \int_{\RR^3} \parallel \varphi(t,\tilde k + k, \cdot) \parallel_{L^2_\eta} \ \parallel w^n (t,k, \cdot) \parallel_{L^2_\eta} \, d k \, .
   \end{array}
\]
On the right-hand side, we can recognize a convolution product. By  Young's inequality, we have
\begin{equation}\label{unifdf}
 \sup_n \parallel h^n (t,\cdot) \parallel_{L^2} \leq \parallel \varphi(t,\cdot) \parallel_{L^1_k (L^2_\eta)} \ \sup_n \parallel w^n (t,\cdot) \parallel_{L^2} < + \infty \, . 
 \end{equation}
Looking at the definition of $ h^n $, we see that the derivatives of $ h^n (t,\cdot) $ produce powers of $ \eta $ and derivatives of $ \varphi $. Then,  as described above, we obtain that
\[ \sup_n \parallel h^n (t,\cdot) \parallel_{H^m} \leq \parallel \langle \eta \rangle^m \varphi(t,\cdot) \parallel_{L^1_k (H^m_\eta)} \ \sup_n \parallel w^n (t,\cdot) \parallel_{L^2} < + \infty \, . \]
Thus, for all $ t \in \RR_+ $, there is a  subsequence $ \bigl( h^n(t,\cdot)\bigr)_n $ which converges strongly in $ L^2 $.
There remains to estimate uniformly the time derivative of $ h^n $. Remark that
\[ \begin{array}{rl}
\displaystyle \vert \part_t h^n (t,\tilde k)  \vert \lesssim \! \! \! & \parallel w^n (t,\cdot) \parallel_{L^2} \ \parallel \part_t \varphi \parallel_{L^2} \smallskip \\
\ & \displaystyle + \, \Bigl \vert \int_{\RR^3} \! \int_{\RR^3} \sin (\tilde k \cdot \eta /2 ) \, \part_t w^n (t,k, \eta) \, \varphi(t,k+\tilde k, \eta) \ d k \, d \eta \Bigr \vert \, .
\end{array} 
\]
We can use the equation (\ref{transporteqonw}) to replace $ \part_t w^n $. We perform an integration by parts with respect to $ \eta $ to absorb $ \nabla_\eta $. This furnishes a power of $ \tilde k $ but $ \tilde k \in K $ is bounded. 
Now, let us look at $ \cQ_t (w^n) $. 
Coming back to the definition \eqref{bilitodefine} and exploiting Young's inequality, it is clear that
\[ \begin{array}{rl}
\displaystyle \parallel \cB_{\tilde \chi} [w_1,w_2](t,\cdot) \parallel_{L^\infty} \! \! \! & \displaystyle \lesssim \, \parallel \tilde \chi \, w_1 (t,\cdot,0) \parallel_{L^2} \ \sup_\eta \, \parallel w_2 (t,\cdot,\eta) \parallel_{L^2} \\ 
\ & \displaystyle \lesssim \,
\parallel \tilde \chi \parallel_{L^2} \ 
\parallel w_1 (t,\cdot,0) \parallel_{L^2}
 \sup_\eta \, \parallel w_2 (t,\cdot,
\eta) \parallel_{L^2}  \, .
\end{array} \]
In particular, due to \eqref{supcomega}, we can assert that
\[ \sup_n \, \parallel \cB_{\tilde \chi} \bigl \lbrack g_{w_n}(s,t,\cdot), w_n (t,\cdot) \bigr \rbrack](t,\cdot) \parallel_{L^\infty} \lesssim \, 1 \, . \]
It follows that $ \cQ_t (w^n) $ is uniformly bounded. For similar reasons, the same applies concerning $ \cL_t w_n $. Briefly, we  have just seen that
 $ \part_t h_n (t,\tilde k) $ with $ \tilde k \in K $ is uniformly bounded in $ L^\infty $.  This gives rise to some equicontinuity in time. By Arzel\`a-Ascoli theorem, by extracting a second
 subsequence if necessary, it can be ensured that the subsequence  $ (h_{\psi(n)})_n $ of continuous functions converges uniformly (as well as all its
 derivatives) on all compact subsets of $ [0,T] \times \RR^3 $ to the continuous function
\begin{equation}\label{hazekfhk}
 h^\infty (t,\tilde k)  := \int_{\RR^3} \! \int_{\RR^3} \sin (\tilde k \cdot \eta /2 ) \, w (t,k, \eta) \, \varphi(t,k+\tilde k, \eta) \, d k d \eta .
 \end{equation}
Note that,  for all $ t \in [0,T] $, we have
 \begin{equation}\label{l2uniboundninfty}
\parallel h^\infty (t,\cdot) \parallel_{L^2} \leq \parallel w(t,\cdot) \parallel_{L^2_{k,\eta}} \, \parallel \varphi (t,\cdot) \parallel_{L^1_k (L^2_\eta)} .
\end{equation}
The difficulty is to pass to the limit (when $ n \rightarrow + \infty $) in the nonlinear terms, and to recognize at the limit the corresponding nonlinear expressions of the weak limits. Since the correct link between $ g_\diamond^\infty $ and $ h_\infty $ with $ w $ has been established at the level of \eqref{passlimgnalpha}, \eqref{passlimgnw} and \eqref{hazekfhk}, it suffices to show that
\[ \lim_{n \rightarrow + \infty} \ \int_0^T \! \! \int_{\RR^3} g^n_\diamond (t,\tilde k,0) \, h^n (t,\tilde k)  \, dt d \tilde k = \int_0^T \! \! \int_{\RR^3} g_\diamond^\infty
(t,\tilde k,0) \, h^\infty (t,\tilde k)  \, dt d \tilde k . \]
When restricted to compact sets, this convergence is easy. Indeed, the weak
$ L^2 $-convergence of $ (g^\diamond_n)_n $ and the strong $ L^2 $-convergence of $ (h_n)_n $ are sufficient to conclude.
The difficulty is to get rid of the large values of $ \vert \tilde k \vert $. Let $ M \in \RR_+^* $. By H\"older's inequality, exploiting the uniform bound \eqref{unifdf} on the $ L^2 $-norm of $ h^n $, we get that
\[ \Bigl \vert \int_{M \leq \vert \tilde k \vert} g^n_\diamond (t,\tilde k,0) \ h^n (t,\tilde k) \, d \tilde k \Bigr \vert \leq \int_{M \leq \vert \tilde k \vert} \vert g^n_\diamond (t,\tilde k,0)   \vert^2 \, d \tilde k . \]
Since the sequences $ (\omtilk^{1/2} \, \tilde \alpha^n_0)_n $ and $ (w^n)_n $ are bounded in $ L^\infty $, coming back to the definition of $ g^n_\diamond $, we find that
\[ \int_{M \leq \vert \tilde k \vert} \vert g^n_\diamond (t,\tilde k,0) \vert^2 \, d \tilde k \lesssim \int_{M \leq \vert \tilde k \vert} \tilde \chi^n (\tilde k)^2 \, d \tilde k \lesssim \int_{M-1 \leq \vert \tilde k \vert}
 \tilde \chi (\tilde k)^2 \, d \tilde k . \]
We have assumed that $ \tilde \chi \in L^2 $. Thus, for $ M $ large enough, the right hand side can be made as small as wanted. This completes the proof.
\end{proof}

\appendix
\section{Phase-space analysis}\label{sec:appendix}

The aim of this subsection is to collect standard tools from time-frequency analysis which are exploited in our text, and which are discussed at length for instance in \cite{Chaichenets,MR3800635,MR3643624,MR2294795}.

\addtocontents{toc}{\setcounter{tocdepth}{-10}}
\subsection{About the Wigner transform}\label{aboutwigner}
The reader can simply refer to \cite{MR2502626}, or to the book \cite{MR3643624} for more details. Let $ d \in \NN^* $.
The (cross-)Wigner transform  is the continuous sesquilinear map
\begin{equation}\label{crosswigner}
\begin{array}{rcl}
\mathbb W : \cS (\RR^d ; \CC) \times \cS (\RR^d ; \CC) & \longrightarrow & \cS (\RR^d \times \RR^d ; \CC) \\
\bigl(u(x) , \slu (x) \bigr) & \longmapsto & \mathbb W[u , \slu ] (x,\xi)
\end{array}
\end{equation}
which is defined according to
\begin{equation}\label{crosswignerdef}
\mathbb W[u , \slu ] (x,\xi) := \int_{\RR^d} e^{-i \xi \cdot y} \, u \bigl( x + \frac{y}{2} \bigr) \, \overline{\slu} \bigl( x - \frac{y}{2} \bigr) \,
dy = \cF \Bigl( u \bigl( x + \frac{\cdot}{2} \bigr) \, \overline{\slu} \bigl( x - \frac{\cdot}{2} \bigr)  \Bigr) (\xi) .
\end{equation}
From Plancherel theorem, we can easily deduce the relation
\begin{equation}\label{crosswignerplancherelidentity}
\mathbb W[u , \slu ] (x,\xi) = \frac{1}{(2 \pi)^d} \, \mathbb W[ \hat u , \hat \slu ] (\xi,-x) ,
\end{equation}
as well as the so called Moyal identity
\begin{equation}\label{crosswignermoyalidentity}
\langle \mathbb W[u , \slu ] , \mathbb W[ \tilde u , \tilde \slu ] \rangle_{L^2 \times L^2} = (2 \pi)^d \, \langle u , \tilde u \rangle_{L^2 \times L^2}  \,
\overline{\langle \slu ,\tilde \slu \rangle}_{L^2 \times L^2}  ,
\end{equation}
which implies
\begin{equation}\label{crosswignermoyalidentityb}
\parallel \mathbb W[u , \slu ] \parallel_{L^2} = (2 \pi)^{d/2} \parallel u \parallel_{L^2} \, \parallel \slu \parallel_{L^2} .
\end{equation}
On the other hand, by applying H\"older inequality to (\ref{crosswignerdef}) one gets
\begin{equation}\label{crosswignerholderdentity}
\parallel \mathbb W[u , \slu ] \parallel_{L^\infty} \leq 2^d \parallel u \parallel_{L^2} \, \parallel \slu \parallel_{L^2} .
\end{equation}
From the right part
of (\ref{crosswignerdef}), we get the inversion formula
\begin{equation}\label{recallwig}
u(x) \, \overline{\slu} (y) = \frac{1}{(2 \pi)^d}  \int_{\RR^d} \mathbb W[u,\slu] \Bigl( \frac{x+y}{2},\xi \Bigr) \, e^{i \xi \cdot (x-y)} \, d \xi .
\end{equation}
The Wigner function $  v \in \cS (\RR^d \times \RR^d;\RR) $ associated with $ u \in \cS (\RR^d ;\CC) $ is simply
\begin{equation}\label{wignersimple}
 v ( x,\xi ) \equiv {\rm W} (u) ( x,\xi ) := \mathbb W \lbrack u , u \rbrack (x,\xi) = \bar v  ( x,\xi ) .
 \end{equation}
In (\ref{recallwig}), replace $ \slu $ by $ u $ and $ y $ by $ x $ to find that
 \begin{equation}\label{recallwigff}
\qquad \quad \vert u(x) \vert^2 = \frac{1}{(2 \pi)^d}  \int_{\RR^d} v ( x ,\xi ) d \xi , \qquad \vert \hat u (\xi) \vert^2
= \int_{\RR^d} v ( x ,\xi ) d x ,
\end{equation}
and therefore
\begin{equation}\label{recallwigffl1}
\parallel u \parallel^2_{L^2 (\RR^d)} = \frac{1}{(2 \pi)^d} \parallel \hat u \parallel^2_{L^2 (\RR^d)} = \frac{1}{(2 \pi)^d}
\int_{\RR^d} \! \int_{\RR^d} v ( x ,\xi ) dx d \xi .
\end{equation}
On the other hand, looking at (\ref{crosswignermoyalidentity}), we know that
 \begin{equation}\label{Moyals}
\int_{\RR^d} \vert u(x) \vert^2 \, dx = \frac{1}{(2 \pi)^{d/2}}  \left( \int_{\RR^d} \! \int_{\RR^d} \vert v ( x ,\xi ) \vert^2 \, dx d \xi \right)^{1/2}  .
\end{equation}
Applying (\ref{recallwig}) to $ \hat u $  rather than to  $ u $ and  $ \slu $, and then using (\ref{crosswignerplancherelidentity}), we obtain
\begin{equation}\label{wignerdenucleaon}
\cF_{\! x} \bigl( v (\cdot,\xi) \bigr) (k) =  \hat u \Bigl( \xi + \frac{k}{2} \Bigr) \, \overline{\hat u} \Bigl( \xi - \frac{k}{2} \Bigr)  .
\end{equation}
On the other hand, the Fourier--Wigner transform of $ u $ as in  
definition~\eqref{defunknownw} is equivalent to
\begin{equation}\label{Fourier_Wigner_formula_bis}
\begin{aligned}
w(k,\eta)
&= \int_{\RR^d} e^{-ik\cdot x}\,
\cF_\xi \circ \cF^{-1}_{y\to \xi}
\bigl(
u(x-y/2)\,\overline{u}(x+y/2)
\bigr)(\eta)\,dx \,.
\end{aligned}
\end{equation}
Making the change of variables $y\mapsto -y$ in
\eqref{Fourier_Wigner_formula_bis} yields
\begin{equation}\label{alternativeformulaforw}
w(k,\eta)= (2\pi)^{-d} \, \cF_{x,\xi} \circ
{\rm W}(u)(k,\eta) \, .
\end{equation}
By exchanging the order of integration in
\eqref{defunknownw}, we obtain the alternative representation
\begin{equation}\label{autreformulapourwbis}
w(k,\eta) = \int_{\RR^d} e^{-i k \cdot x} \ u \Bigl( x - \frac{\eta}{2} \Bigr) \ \bar u \Bigl( x + \frac{\eta}{2} \Bigr) \ dx \, .
\end{equation}
Finally, the cross-Wigner transform $ \mathbb W $ can be extended to a map $ \cS' (\RR^d ; \CC) \times \cS' (\RR^d ; \CC) \longrightarrow \cS' (\RR^d \times \RR^d ; \CC) $. Moreover,  from (\ref{crosswignermoyalidentityb}) and (\ref{crosswignerholderdentity}), we can infer that the restriction of $ \mathbb W $ to the
subspace $ L^2 (\RR^d ; \CC) \times L^2 (\RR^{d} ; \CC) $ maps into $ (L^2 \cap \cC^0)(\RR^{2d} ; \CC) $.

\subsection{About the short-time Fourier transform}\label{aboutstft} 
We briefly recall the definition and some basic properties of the short-time Fourier transform (STFT); see, for instance, \cite{Chaichenets,MR2294795}. Given $ x,\xi \in \RR^d $, define the translation and modulation operators by
  \begin{equation}\label{trans-mod}
 T_x u (t) := u(t-x) , \qquad M_\xi u (t) := e^{i \xi \cdot t} u(t) .
\end{equation}
The short-time Fourier transform (STFT) of the function $ u $ with respect to the window $ \slu $ is defined by
\begin{equation}\label{STFT}
 V_{\slu} u (x,\xi) := \int_{\RR^d} u(t) \, \bar \slu(t-x) \, e^{-it\cdot \xi} \, dt = \langle u , M_\xi  T_x \slu \rangle_{L^2 \times L^2} .
\end{equation}
An immediate consequence of the Plancherel theorem is the identity
\begin{equation}\label{fondaidenity}
V_{\slu}u(x,\xi)
=e^{-ix\cdot\xi}\,V_{\hat{\slu}}\hat{u}(\xi,-x).
\end{equation}
Finally, writing $\check{\slu}(t):=\slu(-t)$ for the reflected window and performing the change of variables
$t=x+\frac{y}{2}$ in \eqref{crosswignerdef}, we see that the Wigner distribution is essentially a short-time Fourier transform. More precisely,
\begin{equation}\label{linkWigSTFT}
\mathbb W[u,\slu](x,\xi)
=
2^d\,e^{2ix\cdot\xi}\,
V_{\check{\slu}}u(2x,2\xi).
\end{equation}


\subsection{About the modulation spaces}\label{aboutmodulation} 
The presentation below is largely inspired by Chapter~2 (Preliminaries)
of \cite{MR3800635}. Starting from the definition
\eqref{crosswignerdef}, we consider the Fourier transform of the cross-Wigner
distribution with respect to both the space and frequency variables
$x$ and $\xi$, namely
\begin{equation}\label{fourierdewig}
\begin{aligned}
w_c(k,\eta)
&:= \cF_{x,\xi}\bigl(\mathbb W[u,\slu]\bigr)(k,\eta) \\
&= \int_{\RR^d}\!\!\int_{\RR^d}
e^{-i(k\cdot x+\eta\cdot\xi)}
\,\mathbb W[u,\slu](x,\xi)\,dx\,d\xi.
\end{aligned}
\end{equation}
\begin{lem}[Relation between the cross-Wigner transform and the STFT]
\label{LinkbetweenWignerandSTFT}
The Fourier transform of the cross-Wigner distribution satisfies
\begin{equation}\label{linkFwignertostft}
w_c(k,\eta)
=e^{i k\cdot\eta/2}\,
V_{\hat{\slu}}\hat{u}(k,\eta).
\end{equation}
In particular, when $\slu=u$, identity \eqref{linkFwignertostft} reduces to
\eqref{wignerdenucleaon}.
\end{lem}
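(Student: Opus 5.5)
The identity \eqref{linkFwignertostft} is a direct computation, and I would carry it out from the right-hand representation in \eqref{crosswignerdef}. For fixed $x$, $\mathbb W[u,\slu](x,\cdot)$ is the Fourier transform in $y$ of $y\mapsto u(x+y/2)\,\overline{\slu}(x-y/2)$. The steps are as follows.

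First, take the Fourier transform in $\xi$ at the point $\eta$. By the inversion formula \eqref{Fourierandinverse2}, $\cF_\xi\cF_y g(\eta)=(2\pi)^d g(-\eta)$. This gives
\[
\cF_\xi\bigl(\mathbb W[u,\slu](x,\cdot)\bigr)(\eta)=(2\pi)^d\,u(x-\eta/2)\,\overline{\slu}(x+\eta/2).
\]
This is exactly the mechanism behind \eqref{autreformulapourwbis}, which is the case $\slu=u$. Next, take the Fourier transform in $x$ at $k$:
\[
w_c(k,\eta)=(2\pi)^d\int_{\RR^d} e^{-ik\cdot x}\,u(x-\eta/2)\,\overline{\slu}(x+\eta/2)\,dx .
\]

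For the key step, substitute $t=x-\eta/2$. This pulls out the phase $e^{-ik\cdot\eta/2}$ and produces $(2\pi)^d\int e^{-ik\cdot t}u(t)\overline{\slu}(t+\eta)\,dt$. In the notation of \eqref{STFT}, this integral is $V_{\slu}u(-\eta,k)$. Then apply the Plancherel relation \eqref{fondaidenity}, $V_{\slu}u(-\eta,k)=e^{i\eta\cdot k}V_{\hat\slu}\hat u(k,\eta)$. The two phases combine to $e^{ik\cdot\eta/2}$.

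The only real obstacle is bookkeeping. The factor $(2\pi)^d$ and the signs of the phases depend on the conventions \eqref{Fourierandinverse}, \eqref{STFT} and \eqref{fondaidenity}. I would therefore check the normalization on a Gaussian $u=\slu$, or at $(k,\eta)=(0,0)$, where both sides reduce to multiples of $\langle u,\slu\rangle$. Then I would state the identity with whatever constant the conventions force. I suspect an overall factor $(2\pi)^d$, which may be absorbed into the paper's normalization of $V$.

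For the last claim, take $\slu=u$ and compare with \eqref{wignerdenucleaon}. Write $V_{\hat u}\hat u(k,\eta)=\int \hat u(\tau)\overline{\hat u}(\tau-k)e^{-i\tau\cdot\eta}\,d\tau$ and shift $\tau=\xi+k/2$. This produces $e^{-ik\cdot\eta/2}\int \hat u(\xi+k/2)\overline{\hat u}(\xi-k/2)e^{-i\xi\cdot\eta}\,d\xi$. The phase cancels the prefactor, and what remains is $\cF_\xi$ of \eqref{wignerdenucleaon}, i.e.\ $\cF_{x,\xi}\mathrm W(u)$, as claimed.

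All steps hold for Schwartz functions, where Fubini applies. They extend to $L^2$ by density using \eqref{crosswignermoyalidentityb}.
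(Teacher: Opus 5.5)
The real gap is the constant, which is the whole content of the lemma. Your chain is correct up to $w_c(k,\eta)=(2\pi)^d e^{-ik\cdot\eta/2}\,V_{\slu}u(-\eta,k)$. After that you leave the normalization open and predict an extra factor $(2\pi)^d$, which would make the statement false. Nothing can be ``absorbed into the normalization of $V$'', since \eqref{STFT} defines $V$ with no constant.

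The spurious factor comes from using \eqref{fondaidenity} as printed, which is not consistent with \eqref{Fourierandinverse}. With that convention, Parseval reads $\langle f,g\rangle=(2\pi)^{-d}\langle \hat f,\hat g\rangle$, and $\widehat{M_\xi T_x\slu}(\tau)=e^{-i(\tau-\xi)\cdot x}\hat\slu(\tau-\xi)$. So the correct identity is
\[
V_{\slu}u(x,\xi)=(2\pi)^{-d}\,e^{-ix\cdot\xi}\,V_{\hat\slu}\hat u(\xi,-x).
\]
Insert this at $(x,\xi)=(-\eta,k)$. The $(2\pi)^d$ produced by $\cF_\xi\cF_y g=(2\pi)^d g(-\cdot)$ then cancels, and you get \eqref{linkFwignertostft} with constant $1$.

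The check you proposed but did not carry out confirms this. Your second display gives $w_c(0,0)=(2\pi)^d\langle u,\slu\rangle$, while $V_{\hat\slu}\hat u(0,0)=\langle \hat u,\hat\slu\rangle=(2\pi)^d\langle u,\slu\rangle$. Your own last paragraph also contradicts the guess: for $\slu=u$ it yields $\cF_{x,\xi}\mathrm W(u)=e^{ik\cdot\eta/2}V_{\hat u}\hat u$ with no extra factor.

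Once this is repaired, your argument is a valid alternative to the paper's. The paper transforms in $x$ first and obtains $\hat u(\xi+k/2)\,\overline{\hat\slu}(\xi-k/2)$. This is the cross version of \eqref{wignerdenucleaon}, which is where the ``in particular'' clause comes from. It then transforms in $\xi$ with the shift $t=\xi+k/2$ and lands directly on $e^{ik\cdot\eta/2}V_{\hat\slu}\hat u(k,\eta)$. No Plancherel constant ever enters, so that route cannot have the bookkeeping problem you ran into.

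Your final paragraph is exactly this computation run backwards for $\slu=u$. Running it forwards for general $\slu$ would give a complete proof. Your route, transforming in $\xi$ first, passes through the ambiguity-function form $(2\pi)^d\int e^{-ik\cdot x}u(x-\eta/2)\overline{\slu}(x+\eta/2)\,dx$, the cross version of \eqref{autreformulapourwbis}. It therefore needs the time--frequency symmetry of the STFT, correctly normalized.
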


\begin{proof} Using \eqref{STFT}, \eqref{linkWigSTFT} and then integrating with respect to $x$, yields 
\begin{equation}\label{handledthrough}
\begin{array}{rl}
\displaystyle \int_{\RR^d} e^{-i k \cdot x} \, \mathbb W[u , \slu ] (x,\xi) \, dx \! \! \! & = \displaystyle 2^d \int_{\RR^d} \! \int_{\RR^d}
e^{i x\cdot  (2 \xi-k)} \, u(t) \, \bar \slu(2x-t) \, e^{-2 it\cdot \xi} \, dt  \, dx \\
\ &  \displaystyle =  \int_{\RR^d} e^{- i t \cdot  (\xi+ \frac{k}{2})} \, u(t) \, \overline {\hat \slu} \Bigl( \xi - \frac{k}{2} \Bigr) \, dt  =
\hat u \Bigl( \xi + \frac{k}{2} \Bigr) \, \overline {\hat \slu} \Bigl( \xi - \frac{k}{2} \Bigr) .
\end{array}
\end{equation}
When $ \slu = u $, we recover (\ref{wignerdenucleaon}). Applying Fubini's theorem and the change of variables $ t = \xi + \frac{k}{2} $ gives
\[
\int_{\RR^d}
e^{-i\eta\cdot\xi}
\hat u\!\left(\xi+\frac{k}{2}\right)
\overline{\hat\slu\!\left(\xi-\frac{k}{2}\right)}
\,d\xi
=
e^{ik\cdot\eta/2}
\int_{\RR^d}
e^{-it\cdot\eta}
\hat u(t)\,
\overline{\hat\slu(t-k)}
\,dt
=
e^{ik\cdot\eta/2}
V_{\hat\slu}\hat u(k,\eta),
\]
which is exactly \eqref{linkFwignertostft}.
\end{proof}

Denote by $\alpha(\cdot)$ and $\beta(\cdot)$ two even positive weight
functions on $\RR^d$. Typical examples are polynomial weights, such as
$\alpha(z)=\langle z\rangle^r$ and $\beta(z)=\langle z\rangle^s$ for some
$r,s\in\RR$. Let $g\in\cS(\RR^d)\setminus\{0\}$ be a fixed window function,
and let $1\leq p,q\leq+\infty$. The \emph{modulation space}
$M^{p,q}_{\alpha\otimes\beta}(\RR^d)$ is defined as the space of all tempered
distributions $u\in\cS'(\RR^d)$ such that the short-time Fourier transform
$V_g u$ belongs to the weighted mixed-norm Lebesgue space
$L^q_{\beta,\xi}(L^p_{\alpha,x})$, namely,
(with the usual modifications when $p=+\infty$ or $q=+\infty$)
\begin{equation}\label{normmodulation} \  
\|u\|_{M^{p,q}_{\alpha\otimes\beta}}
:=
\|V_g u\|_{L^q_{\beta,\xi}(L^p_{\alpha,x})}
=
\left(
\int_{\RR^d}
\left(
\int_{\RR^d}
|V_g u(x,\xi)|^p\,\alpha(x)^p\,dx
\right)^{q/p}
\beta(\xi)^q\,d\xi
\right)^{1/q} \! \! 
<+\infty.
\end{equation}
The definition of $M^{p,q}_{\alpha\otimes\beta}(\RR^d)$ is independent of the
choice of the non-zero window $g\in\cS(\RR^d)$. More precisely, different
choices of $g$ give rise to equivalent norms; see for instance
\cite{Chaichenets}. Moreover, Proposition~2.24 of \cite{Chaichenets} shows
that, in the particular case $\alpha=1$ and
$\beta(\xi)=\langle\xi\rangle^s$, $s\in\RR$, the Schwartz assumption on the
window can be substantially relaxed. Indeed, the norm
\eqref{normmodulation} remains equivalent for every non-zero window
$g\in M^{1,1}_{1\otimes\langle\xi\rangle^s}$. Finally, the choice
$(p,q)=(\infty,1)$ yields the celebrated Sj\"ostrand class
\cite{MR1266757}.


\subsection{About the Wiener amalgam spaces}\label{aboutwiener}
For a concise introduction to Wiener amalgam spaces, we refer the reader
to \cite{MR3800635}. The Wiener amalgam space
$W(\cF L^p_\alpha,L^q_\beta)(\RR^d)$ is defined as the space of all tempered
distributions $u\in\cS'(\RR^d)$ satisfying
\begin{equation}\label{normamalgamsp}
\|u\|_{W(\cF L^p_\alpha,L^q_\beta)}
:=
\left(
\int_{\RR^d}
\left(
\int_{\RR^d}
|V_g u(x,\xi)|^p\,\alpha(\xi)^p\,d\xi
\right)^{q/p}
\beta(x)^q\,dx
\right)^{1/q}
<+\infty,
\end{equation}
or, equivalently,
\[
V_g u\in L^q_{\beta,x}(L^p_{\alpha,\xi}).
\]
Comparing \eqref{normmodulation} with \eqref{normamalgamsp}, we observe that
the roles of the variables $x$ and $\xi$ are interchanged. The connection
between modulation and Wiener amalgam spaces follows immediately from
\eqref{fondaidenity}. Indeed, since the weight $\beta$ is even, we obtain
\begin{equation}\label{link-mod-Wien}
\|u\|_{M^{p,q}_{\alpha\otimes\beta}}
=
\|\hat u\|_{W(\cF L^p_\alpha,L^q_\beta)}.
\end{equation}
Thus, Wiener amalgam spaces are precisely the Fourier images of modulation
spaces.

\noindent 
Next, let $\slu=u$ in \eqref{linkFwignertostft}. In view of
\eqref{alternativeformulaforw}, we recover
\[
|w(k,\eta)|
=
(2\pi)^{-d}
\bigl|
\cF_{x,\xi}\bigl(\mathbb W[u,u]\bigr)(k,\eta)
\bigr|
=
|V_{\hat u}\hat u(k,\eta)|.
\]
Using \eqref{fondaidenity}, this can be rewritten as
\[
\langle k\rangle^s |w(k,\eta)|
=
\langle k\rangle^s |V_u u(-\eta,k)|.
\]
Therefore, provided that $u$ is admissible as a window function, the
modulation norm \eqref{normmodulation} yields
\begin{equation}\label{equivnormwigsjo}
\|w\|_{L^p_{s,k}(L^\infty_\eta)}
=
\|u\|_{M^{\infty,p}_{1\otimes\langle\xi\rangle^s}}.
\end{equation}
In other words, the phase-space framework
$L^p_{s,k}(L^\infty_\eta)$ naturally corresponds to the modulation space
$M^{\infty,p}_{1\otimes\langle\xi\rangle^s}$, which reduces to
Sj\"ostrand's class when $p=1$. The functional framework $ L^\infty(\cC^\omega_b) $ highlighted in the present paper (and microlocally propagated in the context of FM equations) is a refinement of $ L^\infty_{s,k}(L^\infty_\eta) $.


\bibliographystyle{abbrv}

 \newcommand{\noop}[1]{}

\end{document}